\newtheorem{thm}{Theorem}[section]
\newtheorem{prop}[thm]{Proposition}
\newtheorem{lemma}[thm]{Lemma}
\newtheorem{cor}[thm]{Corollary}
\newtheorem{rem}[thm]{Remark}
\theoremstyle{definition}
\newtheorem{example}[thm]{Example}
\newtheorem{definition}[thm]{Definition}
\numberwithin{equation}{section}
\newcommand{\N}{\mathbb{N}}
\newcommand{\R}{\mathbb{R}}
\newcommand{\dom}{{\mathrm{dom}}}
\newcommand{\norm}[1]{\left\Vert #1 \right\Vert}
\renewcommand{\H}{{\cal H}}
\newcommand{\X}{{\cal X}}
\renewcommand{\div}{\mathrm{div}\,}
\newcommand{\rot}{\mathrm{rot}\,}
\newcommand{\argmin}{\mathrm{arg}\min}
\newcommand{\st}{\;:\;}
\newcommand{\testSpaceV}{C^\infty_c(\Omega,[-1,1])}
\newcommand{\spaceV}{H^1_0(\Omega,[-1,1])}
\newcommand{\E}{\mathcal{E}}
\newcommand{\supp}{\mathrm{supp}}
\newcommand{\dx}{\,\mathrm{d}x}
\renewcommand{\d}{\,\mathrm{d}}
\newcommand{\calN}{\mathcal{N}}
\newcommand{\calJ}{\mathcal{J}}
\newcommand{\calC}{\mathcal{C}}
\newcommand{\calK}{\mathcal{K}}
\newcommand{\sgn}{\mathrm{sgn}}
\newcommand{\conv}{\mathrm{conv}}
\newcommand{\eps}{\varepsilon}
\newcommand{\id}{\mathrm{id}}
\newcommand{\bv}{{BV}}
\newcommand{\tv}{\mathrm{TV}}
\newcommand{\change}[1]{\textcolor{red}{#1}}
\renewcommand{\change}[1]{#1}
\newcommand{\secchange}[1]{\textcolor{red}{#1}}
\renewcommand{\secchange}[1]{#1}
\begin{document}

\title{Nonlinear Spectral Decompositions by Gradient Flows of One-Homogeneous Functionals}
\author{Leon Bungert\thanks{Department Mathematik, Universit\"{a}t Erlangen-N\"{u}rnberg, Cauerstrasse 11, 91058 Erlangen, Germany. \texttt{\{leon.bungert,martin.burger\}@fau.de}} \and Martin Burger\footnotemark[1] \and Antonin Chambolle\thanks{CMAP, \'{E}cole Polytechnique, CNRS, 91128 Palaiseau Cedex,
France. \texttt{antonin.chambolle@cmap.polytechnique.fr}} \and Matteo Novaga\thanks{Dipartimento di Matematica, Universit\`{a} di Pisa, Largo Pontecorvo 5, 56127 Pisa, Italy. \texttt{matteo.novaga@unipi.it}}}
\maketitle

\begin{abstract}
This paper establishes a theory of nonlinear spectral decompositions by considering the eigenvalue problem related to an absolutely one-homogeneous functional in an infinite-dimensional Hilbert space. This approach is both motivated by works for the total variation, where interesting results on the eigenvalue problem and the relation to the total variation flow have been proven previously, and by recent results on finite-dimensional polyhedral semi-norms, where gradient flows can yield spectral decompositions into eigenvectors. 

We provide a geometric characterization of eigenvectors via a dual unit ball and prove them to be subgradients of minimal norm. This establishes the connection to gradient flows, whose time evolution is a decomposition of the initial condition into subgradients of minimal norm. If these are eigenvectors, this implies an interesting orthogonality relation and the equivalence of the gradient flow to a variational regularization method and an inverse scale space flow. Indeed we verify that all scenarios where these equivalences were known before by other arguments -- such as one-dimensional total variation, multidimensional generalizations to vector fields, or certain polyhedral semi-norms -- yield spectral decompositions, and we provide further examples. We also investigate extinction times and extinction profiles, which we characterize as eigenvectors in a very general setting, generalizing several results from literature.

{\bf Keywords: } Nonlinear spectral decompositions, gradient flows, nonlinear eigenvalue problems, one-homogeneous functionals, extinction profiles. 

{\bf AMS Subject Classification: }  35P10, 35P30, 47J10.
\end{abstract}

\section{Introduction}

Spectral properties and spectral decompositions are at the heart of many arguments in mathematics and physics, let us just mention the spectral decomposition of self-adjoint linear operators (cf. \cite{reed1972methods,kato2013perturbation}) or the eigenvalue problems for high-dimensional or nonlinear Schr\"odinger equations (cf. \cite{weinstein1985modulational}) as two prominent examples. In signal and image processing a variety of successful approaches were based on Fourier transforms and Laplacian eigenfunctions, in image reconstruction and inverse problems the singular value decomposition is the fundamental tool for linear problems. Over the last two decades variational approaches and other techniques such as sparsity in anisotropic Banach spaces became popular and spectral techniques lost their dominant roles (cf. \cite{benning2018modern,burger2013guide,caselles2014total,donoho2006compressed,mairal2014sparse}). 

Standard examples considered in the nonlinear setting are gradient flows of the form
$$ \partial_t u(t)=-p(t),\;p(t)\in\partial J (u(t)),\quad u(0)=f,$$
in some Hilbert space with $\partial J$ denoting the subdifferential of a semi-norm (without Hilbertian structure in general) on a dense subspace, and variational problems of the form
$$ \frac{1}2 \Vert u  -f \Vert^2 + t J(u) \rightarrow \min_u $$
with the norm in the first term being the one in the Hilbert space.  
As some recent results demonstrate, the role of eigenvalue problems and even spectral decompositions may be underestimated in such settings. First of all, data $f$ satisfying the nonlinear eigenvector relation
$$\lambda f \in \partial J(f)$$
for a scalar $\lambda$ give rise to analytically computable solutions for such problems, which was made precise for the TV flow (cf. \cite{andreu2002some,bellettini2002total})  and is hidden in early examples for the variational problem (cf. \cite{meyer2001oscillating} and \cite{benning2013ground} for a detailed discussion). Secondly, for a general datum $f$ the solution of the gradient flow satisfies
$$f-\overline{f}=\int_0^\infty p(t)\d t,$$
i.e., the datum is decomposed into subgradients $p(t)$ of the functional $J$. \change{Here $\overline{f}$ denotes component of the datum $f$ which is in the null-space of $J$ and is left invariant under the flow, for instance the mean value of a function in the case $J=\tv$.} In the case, that these subgradients are even eigenvectors, this is called a \emph{nonlinear spectral decomposition}.
 
In \cite{benning2013ground} some further interesting properties of nonlinear eigenvectors (in a more general setting) such as their use for scale estimates and several relevant examples have been provided. A rather surprising conjecture was made by Gilboa (cf. \cite{gilboa2014total}), suggesting that TV flow and similar schemes can provide a spectral decomposition, i.e., time derivatives of the solution are related to eigenfunctions of the total variation. This was made precise in \cite{burger2016spectral} in a certain finite-dimensional setting; furthermore, scenarios where a decomposition into eigenvectors can be computed were investigated. In more detail, functionals of the form $J(u)=\Vert A u \Vert_1$ with a matrix $A$ such that $AA^T$ is diagonally dominant lead to such spectral decompositions. 


In an infinite-dimensional setting a detailed theory is widely open and will be subject of this paper. We will consider an absolutely one-homogeneous functional  $J$ on a Hilbert space and the corresponding eigenvalue problem $\lambda u \in \partial J(u)$. Effectively this means we look at a semi-norm defined on a subspace (being dense in many typical examples) of the Hilbert space and investigate the associated eigenvalue problem and gradient flow. The basic theory does not assume any relation between $J$ and the norm of the Hilbert space, but we shall see that many favourable properties of the gradient flow -- such as finite time extinction, for instance -- rely on a Poincar\'{e}-type inequality. That is, after factorizing out the null-space of the functional we have a continuous embedding of the Banach space with norm $J$ into the ambient Hilbert space. It is thus natural to think in terms of a Gelfand triple, with subgradients of $J$ existing a-priori only in a dual space which is larger than the Hilbert space. The eigenvalue problem and the gradient flow naturally lead to considering only cases with a subgradient in the Hilbert space, which is an abstract regularity condition known as source condition in inverse problems (cf. \cite{burger2004convergence,benning2018modern}). We shall see that a key role is played by the subgradient of minimal norm (known as minimal norm certificate in compressed sensing and related problems, cf. \cite{fornasier2011compressive,duval2015exact,chambolle2016geometric}). A first key contribution of this paper is a geometric characterization of eigenvectors in such a setting, which is based on a dual characterization of absolutely one-homogeneous convex functionals. Roughly speaking we can interpret all possible subgradients as elements lying on the boundary of a dual unit ball (the subdifferential of $J$ at~$0$) and single out eigenvectors as those elements which are a normal vector to an orthogonal supporting hyperplane of the ball. Thus, the eigenvalue problem becomes a geometric problem for a Banach space relative to a Hilbert space structure. 

We also show that being a subgradient of minimal norm is a necessary condition for eigenvectors. This establishes an interesting connection to gradient flows, which automatically select subgradients of minimal norm as the time derivative of the primal variable. We hence study gradient flows in further detail and conclude that -- if the above-noted geometric condition is satisfied -- they yield a spectral decomposition, i.e., a representation of the initial data $f$ as integral of eigenvectors with decreasing frequency (decreasing Hilbert space norm at fixed dual norm). Moreover, we show that if the gradient flow yields a spectral decomposition, this is already sufficient to obtain equivalence to the variational method as well as an inverse scale space method proposed as an alternative to obtain spectral decompositions (cf. \cite{burger2015spectral}). With an appropriate reparametrization from the time in the gradient flow to a spectral dimension we rigorously obtain a spectral decomposition akin to the spectral decomposition of self adjoint linear operators in Hilbert space as discussed in \cite{burger2016spectral}. We apply our theory to several examples: in particular, it matches the finite-dimensional theory for polyhedral regularizations in \cite{burger2016spectral}, and it can also be used for the one-dimensional total variation flow, a flow of a divergence functional for vector fields, as well as for a combination of divergence and rotation sparsity. Moreover, we visit the simple case of a flow of the $L^1$-norm, which gives further intuition and limitations in a case where no Poincar\'{e}-type estimate between the convex functional and the Hilbert space norm is valid.

Finally, we also discuss the extinction times and extinction profiles of gradient flows, a problem that was studied for TV flow in detail before (cf. \cite{andreu2002some,giga2011scale,bonforte2012total}). Our theory is general enough to allow for a direct generalizations of the results to flows of absolutely one-homogeneous convex functionals and simplifies many proofs. In particular, we can show that under generic conditions the gradient flows have finite extinction time and there is an extinction profile, i.e., a left limit of the time derivative at the extinction, which is an eigenvector. Furthermore, we give sharp upper and lower bounds of the extinction time. For flows that yield a spectral decomposition, we obtain a simple relation between the initial datum, the extinction time, and the extinction profile. In the case of the one-dimensional total variation flow we get the results in \cite{bonforte2012total} as special cases. 


The remainder of the paper is organized as follows: in Section~\ref{sec:notation} we discuss some basic properties of absolutely one-homogeneous functionals and the related nonlinear eigenvalue problem, Section~\ref{sec:geo_char} is devoted to obtain further geometric characterizations of eigenvectors and to work out connections to subgradients of minimal norm. In Section~\ref{sec:spec_dec_GF} we discuss the potential to obtain spectral decompositions by gradient flows. For this sake we give an overview of the classical theory by Brezis, which shows that gradient flows generate subgradients of minimal norm, and we also provide equivalence results to other methods in the case of spectral decompositions, for which we give a geometric condition. Moreover, we discuss the appropriate scaling of the spectrum from time to eigenvalues in order to obtain a more suitable decomposition. In Section~\ref{sec:examples} we show that the geometric condition for obtaining a spectral decomposition is indeed satisfied for relevant examples such as total variation flow in one dimension and multidimensional flows of vector fields with functionals based on divergence and rotation. Finally, in Section~\ref{sec:extinction} we investigate the extinction profiles of the gradient flows, which we show to be eigenvectors even if the flow itself does not necessarily produce a spectral decomposition.

\section{Absolutely homogeneous convex functionals and eigenvalue problems}\label{sec:notation}

In the following we collect some results about the basic class of convex functionals we consider in this paper, moreover we provide basic definitions and results about the nonlinear eigenvalue problem related to such functionals. 

\subsection{Notation}
Let $(\H,\langle\cdot,\cdot\rangle)$ be a real Hilbert space with induced norm $\norm{\cdot}:=\sqrt{\langle\cdot,\cdot\rangle}$ and $J$ be a functional in the class $\calC$, defined as follows:
\begin{definition}\label{def:class_C}
The class $\calC$ consists of all maps $J:\H\to\R\cup\{+\infty\}$ such that $J$ is convex, lower semicontinuous with respect to the strong topology on $\H$, and absolutely one-homogeneous, i.e.,
\begin{align}
J(cu)&=|c|J(u),\quad\forall u\in\H,\;c\in\R\setminus\{0\},\label{eq:abs_1-hom_1}\\
J(0)&=0.\label{eq:abs_1-hom_2}
\end{align}
\end{definition}
The effective domain and null-space of $J\in\calC$ are given by
\begin{align}
\dom(J)&:=\{u\in\H\st J(u)<\infty\},\\
\calN(J)&:=\{u\in\H\st J(u)=0\}.
\end{align}
Note that $\dom(J)$ and $\calN(J)$ are not empty due to \eqref{eq:abs_1-hom_2} and that $\calN(J)$ is a (strongly and weakly) closed linear space \cite{bungert2018solution} whose orthogonal complement we denote by $\calN(J)^\perp$. The effective domain $\dom(J)$ is also a linear space but not closed, in general. Given any $f\in\H$, the orthogonal projection $\bar{f}$ of $f$ onto $\calN(J)$ is
\begin{align}\label{eq:orth_proj}
\bar{f}:=\argmin_{u\in\calN(J)}\norm{u-f},
\end{align}
and the ``dual norm'' of $f\in\H$ with respect to $J$ is defined as
\begin{align}\label{eq:dual_norm}
\norm{f}_*:=\sup_{u\in\calN(J)^\perp}\frac{\langle f,u\rangle}{J(u)}.
\end{align}
Considering $J$ as a norm on the Banach space 
\begin{align}\label{eq:Banach_space}
{\cal V} = \dom(J) \cap \calN(J)^\perp,
\end{align}
the above dual norm is indeed defined on the dual space of ${\cal V}$ (respectively a predual if it exists). Hence, we naturally obtain a Gelfand triple structure \change{${\cal V}\hookrightarrow\H\hookrightarrow{\cal V}^*$} and the eigenvalue problem can also be understood as a relation between the geometries of the Hilbert space $\H$ and the Banach spaces ${\cal V}$ and ${\cal V}^*$.

By $\partial J(u)$ we denote the subdifferential of $J$ in $u\in\dom(J)$, given by\secchange{%
\begin{align}
\partial J(u):=\left\lbrace p\in\H\st\langle p,v\rangle\leq J(v),\;\forall v\in\H,\;\langle p,u\rangle=J(u)\right\rbrace,
\end{align}}
and define $\dom(\partial J):=\{u\in\H\st\partial J(u)\neq\emptyset\}.$ Of particular importance will be the subdifferential in zero\secchange{%
\begin{align}\label{eq:subdiff_0}
\partial J(0)=\{p\in\H\st\langle p,v\rangle\leq J(v),\;\forall v\in\H\},
\end{align}}
which uniquely determines $J$ as we will see. Using the definition of the dual norm \eqref{eq:dual_norm}, it can be easily seen that 
\begin{align}
\partial J(0)=\{p\in\calN(J)^\perp\st\norm{p}_*\leq1\},
\end{align}
i.e., roughly speaking the subdifferential of $J$ in ~$0$ coincides with the dual unit ball \change{in the space ${\cal V}^*$}.

Lastly, we recall the definitions of the \emph{Fenchel-conjugate} of a general function $\Phi:\H\to[-\infty,+\infty]$ as $\Phi^*(u):=\sup_{v\in\H}\langle v,u\rangle-\Phi(v)$ and of the \emph{indicator function} of a subset $K\subset\H$ as
$$\chi_K(u):=
\begin{cases}
0,\quad &u\in K,\\
+\infty,\quad &u\notin K.
\end{cases}$$
We refer to \cite{bauschke2011convex} for fundamental properties.

\subsection{Absolutely one-homogeneous functionals}
In the following, we collect some elementary properties of functionals in the class $\calC$ \change{defined in Definition~\ref{def:class_C}} and their subdifferentials whose proofs are either trivial or can be found in~\cite{burger2016spectral}.
\begin{prop}\label{prop:properties_one-hom}
Let $J\in\calC$, $u,v\in\H$, and $c>0$. It holds
\begin{enumerate}
\item $J(u)\geq0$, 
\item $J(u+v)\leq J(u)+J(v)$ and $J(u)-J(v)\leq J(u-v)$,
\item $J(u+w)=J(u)$ for all $w\in\calN(J)$,
\item $J^*(u)=\chi_{\partial J(0)}(u)$,
\item $\partial J(u)$ is convex and weakly closed and it holds $\partial J(u)\subset\partial J(0)$,
\item $\partial J(cu)=\partial J(u)$,
\item $\partial J(u)\perp\calN(J)$, i.e., $\langle p,w\rangle=0$ for all $p\in\partial J(u),\,w\in\calN(J)$.
\end{enumerate}
\end{prop}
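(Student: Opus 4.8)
The plan is to verify the seven assertions one after another, each directly from the definitions of $J\in\calC$ and of the subdifferential; no auxiliary machinery is needed. I would begin with the three inequalities of items~1--3, since the subdifferential statements build on them. For nonnegativity I would combine convexity with the evenness $J(-u)=J(u)$ coming from absolute one-homogeneity: writing $0=J(0)=J(\tfrac12 u+\tfrac12(-u))\leq\tfrac12 J(u)+\tfrac12 J(-u)=J(u)$ gives item~1. The subadditivity $J(u+v)\leq J(u)+J(v)$ of item~2 follows the same pattern, rescaling $J(u+v)=2J(\tfrac12 u+\tfrac12 v)$ and applying convexity; the reverse triangle inequality is then obtained by applying subadditivity to the decomposition $u=(u-v)+v$. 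Item~3 is immediate: subadditivity gives $J(u+w)\leq J(u)$ for $w\in\calN(J)$, and the opposite bound comes from writing $u=(u+w)+(-w)$ with $-w\in\calN(J)$.

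For the Fenchel conjugate in item~4 I would exploit positive one-homogeneity through a scaling argument. Fixing $p$ and any $v\in\H$, the map $t\mapsto\langle tv,p\rangle-J(tv)=t(\langle v,p\rangle-J(v))$ is linear in $t>0$; hence if there exists $v$ with $\langle v,p\rangle>J(v)$ the supremum defining $J^*(p)$ is $+\infty$, while if $\langle v,p\rangle\leq J(v)$ for every $v$ the supremum is attained at $v=0$ and equals $0$. These two cases are exactly $p\notin\partial J(0)$ and $p\in\partial J(0)$, which yields $J^*=\chi_{\partial J(0)}$.

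The remaining items~5--7 I would read off from the explicit description of $\partial J(u)$ as the intersection of the half-spaces $\{p:\langle p,v\rangle\leq J(v)\}$ over all $v$ with the hyperplane $\{p:\langle p,u\rangle=J(u)\}$. Convexity is then clear, and the inclusion $\partial J(u)\subset\partial J(0)$ holds because the half-space constraints defining $\partial J(u)$ are precisely those defining $\partial J(0)$. For item~6 I would note that, for $c>0$, the constraint $\langle p,cu\rangle=J(cu)=cJ(u)$ is equivalent to $\langle p,u\rangle=J(u)$, so the two subdifferentials coincide. For the orthogonality in item~7 I would test the half-space constraint against both $w$ and $-w$ for $w\in\calN(J)$: since $J(w)=J(-w)=0$, I obtain $\langle p,w\rangle\leq0$ and $\langle p,w\rangle\geq0$, forcing equality.

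The only point requiring a little care is the weak closedness in item~5, which is the part I would flag as the main (though still mild) obstacle. Here I would use that for each fixed $v$ the functional $p\mapsto\langle p,v\rangle$ is weakly continuous, so each half-space $\{p:\langle p,v\rangle\leq J(v)\}$ as well as the hyperplane $\{p:\langle p,u\rangle=J(u)\}$ is weakly closed; an arbitrary intersection of weakly closed sets is weakly closed, giving the claim. Everything else is a direct consequence of one-homogeneity and convexity.
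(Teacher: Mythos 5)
Your proof is correct in all seven items, and it is precisely the kind of direct verification from the definitions that the paper has in mind: the paper gives no argument of its own, stating only that the proofs are ``either trivial or can be found in \cite{burger2016spectral}''. Your scaling argument for the Fenchel conjugate and the description of $\partial J(u)$ as an intersection of weakly closed half-spaces with a hyperplane (tested at $\pm w$ for item~7) are the standard arguments being alluded to, so nothing further is needed.
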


As already indicated, the knowledge of the set $\partial J(0)$ suffices to uniquely determine a functional $J\in\calC$. Furthermore, any such functional has a canonical dual representation, similar to the concept of dual norms. This is no surprise since the elements of $\calC$ are semi-norms on subspaces of $\H$ and norms if and only if they have trivial null-space.

\begin{thm}\label{thm:characterization_one-hom}
A functional $J:\H\to\R\cup\{\infty\}$ belongs to $\calC$ if and only if 
$$J(u)=\sup_{p\in K}\langle p,u\rangle=\chi_K^*(u)$$
for a set $K\subset\H$ that meets $K=-K$. In this case, it holds
$$\partial J(0)=\overline{\conv}(K)$$
where $\overline{\conv}$ denotes the closed convex hull of a set.
\end{thm}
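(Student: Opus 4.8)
The plan is to prove the two implications separately and then identify the canonical representing set $K$ with $\partial J(0)$, relying on Fenchel--Moreau duality together with Proposition~\ref{prop:properties_one-hom}(4). For the implication ``$\Leftarrow$'', suppose $J(u)=\sup_{p\in K}\langle p,u\rangle=\chi_K^*(u)$ for some nonempty $K$ with $K=-K$. As a pointwise supremum of the continuous affine maps $u\mapsto\langle p,u\rangle$, the functional $J$ is automatically convex and lower semicontinuous, and $J(0)=\sup_{p\in K}0=0$. Positive one-homogeneity is immediate from pulling a scalar $c>0$ out of the inner product; the point where the symmetry hypothesis enters is the extension to negative scalars. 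Here I would compute, for $c<0$, the chain $J(cu)=\sup_{p\in K}|c|\langle -p,u\rangle=|c|\sup_{q\in -K}\langle q,u\rangle$ and use $-K=K$ to recover $|c|J(u)$. This gives absolute one-homogeneity \eqref{eq:abs_1-hom_1}--\eqref{eq:abs_1-hom_2}, so $J\in\calC$.

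For the implication ``$\Rightarrow$'', suppose $J\in\calC$. The natural candidate is $K=\partial J(0)$, which is symmetric: if $p\in\partial J(0)$ then $\langle p,v\rangle\le J(v)$ for all $v$, and replacing $v$ by $-v$ together with $J(-v)=J(v)$ (absolute one-homogeneity with $c=-1$) yields $\langle -p,v\rangle\le J(v)$, i.e.\ $-p\in\partial J(0)$. Since $J$ is proper, convex and lower semicontinuous, the Fenchel--Moreau theorem gives $J=J^{**}$. Combining this with the identity $J^*=\chi_{\partial J(0)}$ from Proposition~\ref{prop:properties_one-hom}(4), I obtain $J=(\chi_{\partial J(0)})^*=\sup_{p\in\partial J(0)}\langle p,\cdot\rangle=\chi_K^*$, which is exactly the desired representation with a symmetric set.

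It then remains to prove the formula $\partial J(0)=\overline{\conv}(K)$ for an arbitrary representing set $K$. By the first implication the associated $J=\chi_K^*$ lies in $\calC$, so Proposition~\ref{prop:properties_one-hom}(4) applies and yields $J^*=\chi_{\partial J(0)}$. On the other hand, $J^*=(\chi_K^*)^*=\chi_K^{**}$, and the biconjugate of an indicator function is the indicator of its closed convex hull, $\chi_K^{**}=\chi_{\overline{\conv}(K)}$. Comparing the two expressions for $J^*$ forces $\partial J(0)=\overline{\conv}(K)$.

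The computations are short, so the main point requiring care is conceptual rather than technical. One must check that the convex-analytic machinery is legitimately invoked: that $J$ is proper, so that Fenchel--Moreau applies and $K$ cannot be empty (an empty $K$ would give $J\equiv-\infty$), and that it is exactly the symmetry $K=-K$---rather than the positive one-homogeneity shared by every support function---that upgrades $J$ to an \emph{absolutely} one-homogeneous functional. The identity $\chi_K^{**}=\chi_{\overline{\conv}(K)}$ is the only external fact used, and it is standard.
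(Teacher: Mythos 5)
Your proposal is correct and follows essentially the same route as the paper's proof: both directions rest on the identity $J^*=\chi_{\partial J(0)}$, Fenchel--Moreau biconjugation with $K:=\partial J(0)$, the symmetry check via $J(-u)=J(u)$, and the identification $\chi_K^{**}=\chi_{\overline{\conv}(K)}$ to conclude $\partial J(0)=\overline{\conv}(K)$. Your added remarks on properness and the nonemptiness of $K$ are sensible bookkeeping but do not change the argument.
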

\begin{proof}
It is well-known that the Fenchel-conjugate of an absolutely one-homogeneous functional $J$ is given by $J^*(u)=\chi_{\partial J(0)}$. Hence, for the first implication we observe that by choosing $K:=\partial J(0)$ and using that -- being lower semi-continuous and convex -- $J$ equals its double Fenchel-conjugate it holds $J(u)=J^{**}(u)=\chi^*_K(u)=\sup_{p\in K}\langle p,u\rangle.$ Furthermore, $K$ meets $K=-K$ which can be seen from $\langle-p,u\rangle=\langle p,-u\rangle\leq J(-u)=J(u)$ for $p\in K$.

Conversely, any $J$ given by $J(u)=\sup_{p\in K}\langle p,u\rangle=\chi^*_K(u)$ where $K=-K$ trivially belongs to $\calC$ and it holds
$J^*=\chi^{**}_K=\chi_{\overline{\conv}(K)}$. Hence, by standard subdifferential calculus one concludes $\partial J(0)=\{p\in\H\st J^*(p)=0\}=\overline{\conv}(K)$.

\end{proof}

\begin{rem}\label{rem:triangle_integral}
Using the convexity and homogeneity of $J$, Jensen's inequality immediately implies that the generalized triangle inequality 
\begin{align}
J\left(\int_a^b u(t)\d t\right)\leq\int_a^b J(u(t))\d t
\end{align}
for a function $u:[a,b]\to\H$ holds, whenever these expressions make sense.
\end{rem}

Due to Theorem~\ref{thm:characterization_one-hom}, we will from now on assume that 
\begin{align}\label{def:J_and_K}
J(u)=\sup_{p\in K}\langle p,u\rangle,\;u\in\H,
\qquad
K:=\partial J(0),
\end{align}
after possibly replacing $K$ by the closure of its convex hull.

\subsection{Subgradients and eigenvectors}
In this section, we will define the non-linear eigenvalue problem under consideration and provide first insights into the the geometric connection of eigenvectors and the dual unit ball~$K$. We start with general subgradients of a functional $J\in\calC$ before we turn to the special case of subgradients which are eigenvectors.

\begin{definition}[Subgradients]
Let $J\in\calC$ and $u\in\dom(\partial J)$. Then the elements of the set $\partial J(u)$ are called subgradients of $J$ in $u$.
\end{definition}

\begin{prop}\label{prop:subgrad_bdry}
Let $p$ be a subgradient of $J$ in $u\neq 0$ and let $K$ be as in \eqref{def:J_and_K}. Then $p$ lies in the relative boundary $\partial_\mathrm{rel} K$ of $K$, where $\partial_\mathrm{rel} K=K\setminus\mathrm{relint}(K)$ and $\mathrm{relint}(K)=\{p\in K\st \exists c>1: cp\in K\}$ denotes the relative interior of $K$.
\end{prop}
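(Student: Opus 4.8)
The plan is to extract everything from the two defining properties of a subgradient $p\in\partial J(u)$: membership $p\in\partial J(0)=K$ and the exactness identity $\langle p,u\rangle = J(u)$. Since membership already gives $p\in K$, the entire content of the statement is to rule out $p\in\mathrm{relint}(K)$, where, in the radial sense used here, $p\in\mathrm{relint}(K)$ means that $p$ can be scaled up by some $c>1$ while staying inside $K$.

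First I would record, via Proposition~\ref{prop:properties_one-hom}(5), that $p\in K=\partial J(0)$, so by \eqref{eq:subdiff_0} we have $\langle p,v\rangle\le J(v)$ for every $v\in\H$. The only additional information carried by $p$ being a subgradient at $u$ (rather than at $0$) is the equality $\langle p,u\rangle = J(u)$, which is exactly the direction in which the supporting inequality is tight.

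The core step is a one-line contradiction. Suppose $p\in\mathrm{relint}(K)$, so that $cp\in K$ for some $c>1$. Testing the inequality that defines $K$ against the vector $v=u$ gives $c\langle p,u\rangle\le J(u)$, i.e. $c\,J(u)\le J(u)$. Provided $J(u)>0$, this forces $c\le 1$, contradicting $c>1$; hence $p\notin\mathrm{relint}(K)$ and therefore $p\in K\setminus\mathrm{relint}(K)=\partial_\mathrm{rel}K$, as claimed.

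The main obstacle is the degenerate case $J(u)=0$: then $u\in\calN(J)$, and since $K\subset\calN(J)^\perp$ yields $\langle p,u\rangle=0$, the inequality above collapses to $0\le 0$ and gives no contradiction — indeed there $\partial J(u)=\partial J(0)=K$ and the conclusion genuinely fails. So the statement must be read for $u\notin\calN(J)$, and the only real point of the proof is to produce $J(u)>0$ from the hypothesis $u\neq0$. I would handle this by writing $u=\bar u+u^\perp$ with $u^\perp\in\calN(J)^\perp$, noting that $\partial J(u)=\partial J(u^\perp)$ and $J(u)=J(u^\perp)$ (subgradients and values are invariant under adding elements of $\calN(J)$, using $p\perp\calN(J)$), and then using $\calN(J)\cap\calN(J)^\perp=\{0\}$ to conclude $u^\perp\neq0\Rightarrow J(u^\perp)>0$. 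This reduces everything to the nondegenerate situation; the remaining estimate is immediate.
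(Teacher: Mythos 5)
Your proof is correct and follows essentially the same route as the paper's: both arguments assume $cp\in K$ for some $c>1$, test the defining inequality of $K$ against $u$ to get $c\langle p,u\rangle\leq J(u)=\langle p,u\rangle$, and conclude by contradiction. In fact you are more careful than the paper, whose proof dismisses this as ``clearly'' contradicting $c>1$ without noting that the contradiction requires $\langle p,u\rangle=J(u)>0$; your observation that the claim genuinely fails for $u\in\calN(J)\setminus\{0\}$ (where $\partial J(u)=\partial J(0)=K$, which contains non-boundary points such as $0$) and that the hypothesis must be read as $u\notin\calN(J)$ -- under which $J(u)>0$ is immediate, making your projection detour via $u^\perp$ strictly speaking unnecessary -- is a genuine refinement of the statement.
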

\begin{proof}
We observe that for all $c\in[0,1]$ and $v\in\H$ it holds
$$\langle cp,v\rangle=c\langle p,v\rangle\leq cJ(v)\leq J(v),$$
i.e., $cp\in K$. Let us assume that there is $c>1$ such that $cp\in K$. Then, using that $p\in\partial J(u)$ yields
$$c\langle p,u\rangle=\langle cp,u\rangle\leq J(u)=\langle p,u\rangle$$
which clearly contradicts $c>1$. Hence, we have established the claim
\end{proof}

Interestingly, due to the fact that subdifferentials $\partial J(u)$ are convex sets and lie in the (relative) boundary of the convex set $K$, they are either singletons or lie in a ``flat'' region of the boundary of~$K$.

\begin{definition}[Eigenvectors]
Let $J\in\calC$. We say that $u\in\H$ is an eigenvector of $\partial J$ with eigenvalue $\lambda\in\R$ if
\begin{align}
\lambda u\in\partial J(u).
\end{align}
\end{definition}
\begin{rem}
Due to the positive zero-homogeneity of $\partial J(u)$ any multiple $cu$ of $u$ with $c>0$ is also an eigenvector of $\partial J$ with eigenvalue $\lambda/c$. To avoid this ambiguity one sometimes additionally demands $\norm{u}=1$ from an eigenvector $u$. In this work, however, we do not adopt this normalization since it does not match the flows that we are considering. As a consequence, all occurring eigenvalues should be multiplied by the norm of the eigenvector to become interpretable. E.g., let $p\in\partial J(p)$. Then $q:=p/\norm{p}$ has unit norm and is an eigenvector of $\partial J$ with eigenvalue $\lambda:=\norm{p}$ since $\lambda q=p\in\partial J(p)=\partial J(q)$. The last equality follows from 6. in Proposition~\ref{prop:properties_one-hom}. 
\end{rem}

Now we collect some basic properties of eigenvectors and, in particular, we show that eigenvectors are the elements of minimal norm in their respective subdifferential. Hence, one can restrict the search for eigenvectors to the subgradients of minimal norm; this shows a first connection to gradient flows which select the subgradients of minimal norm, as already indicated.

\begin{prop}[Properties of eigenvectors]\label{prop:prop_eigenvectors}
Let $u\in\H$ be an eigenvector of $\partial J$ with eigenvalue $\lambda\in\R$. Then it holds
\begin{enumerate}
\item $-u$ is eigenvector with eigenvalue $\lambda$,
\item $\lambda\geq0$ and $\lambda=0$ if and only if $u=0$,
\item $\lambda u=\argmin\{\norm{p}\st p\in\partial J(u)\}$.
\end{enumerate}
\end{prop}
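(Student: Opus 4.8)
The plan is to treat the three assertions in order, exploiting throughout the defining identity $\langle p,u\rangle=J(u)$ satisfied by \emph{every} $p\in\partial J(u)$, together with the absolute one-homogeneity of $J$.

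For the first claim I would use the reflection symmetry of the subdifferential. If $p\in\partial J(u)$, then $\langle -p,v\rangle=\langle p,-v\rangle\le J(-v)=J(v)$ for all $v\in\H$ by \eqref{eq:abs_1-hom_1}, so $-p\in\partial J(0)$; moreover $\langle -p,-u\rangle=\langle p,u\rangle=J(u)=J(-u)$, hence $-p\in\partial J(-u)$. Applying this to $p=\lambda u$ gives $\lambda(-u)=-\lambda u\in\partial J(-u)$, which is exactly the statement that $-u$ is an eigenvector with the same eigenvalue $\lambda$.

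For the second claim I would test the eigenvector relation against $u$ itself: since $\lambda u\in\partial J(u)$, the subgradient identity yields $\lambda\norm{u}^2=\langle\lambda u,u\rangle=J(u)$. As $J(u)\ge 0$ by Proposition~\ref{prop:properties_one-hom}, this forces $\lambda\ge 0$ whenever $u\neq 0$. For the equivalence, $\lambda=0$ gives $0\in\partial J(u)$, i.e. $u$ minimizes the nonnegative functional $J$ and hence $J(u)=0$, so $u\in\calN(J)$; since every subgradient lies in $\calN(J)^\perp$ (item~7 of Proposition~\ref{prop:properties_one-hom}), under the standing interpretation that eigenvectors are sought in $\calN(J)^\perp$ one gets $u\in\calN(J)\cap\calN(J)^\perp=\{0\}$.

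The third claim is the substantive one. The set $\partial J(u)$ is convex and weakly closed, so it possesses a unique element of minimal norm, characterized as the projection of $0$ onto it. I would verify directly that $\lambda u$ is this element via a Pythagorean identity: for an arbitrary $q\in\partial J(u)$ the subgradient identity gives $\langle q,u\rangle=J(u)$, whence $\langle q-\lambda u,\lambda u\rangle=\lambda\langle q,u\rangle-\lambda^2\norm{u}^2=\lambda J(u)-\lambda J(u)=0$, using $\lambda\norm{u}^2=J(u)$ from the second claim. Therefore $\norm{q}^2=\norm{q-\lambda u}^2+\norm{\lambda u}^2\ge\norm{\lambda u}^2$, with equality precisely when $q=\lambda u$, which identifies $\lambda u$ as the unique minimizer. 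The routine parts are the first claim and the sign of $\lambda$; the point requiring care is ruling out nonzero null-space elements in the second claim, which relies on the orthogonality of subgradients to $\calN(J)$. The crux of the third claim is the observation that $\langle q,u\rangle=J(u)$ holds for every subgradient $q$, not just for $\lambda u$; once this is in hand, the orthogonality $\langle q-\lambda u,\lambda u\rangle=0$ and hence the minimal-norm property follow with no further work.
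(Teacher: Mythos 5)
Your proposal is correct, and on the only item the paper actually proves (the third; the proof there begins ``we only prove the third item''), you take a genuinely different route. The paper argues via Cauchy--Schwarz: for any $p\in\partial J(u)$ it writes $\lambda\norm{u}^2=\langle\lambda u,u\rangle=J(u)=\langle p,u\rangle\leq\norm{p}\norm{u}$, concludes $\norm{\lambda u}\leq\norm{p}$ from $\lambda\geq 0$, and then delegates uniqueness of the minimizer to the convexity of $\partial J(u)$. You instead exploit the fact that \emph{every} $q\in\partial J(u)$ satisfies $\langle q,u\rangle=J(u)$ to obtain the exact orthogonality $\langle q-\lambda u,\lambda u\rangle=0$, and hence the Pythagorean identity $\norm{q}^2=\norm{q-\lambda u}^2+\norm{\lambda u}^2$. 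This yields minimality and uniqueness in one stroke, without Cauchy--Schwarz and without invoking the projection theorem for closed convex sets. It also buys something the paper's proof does not: the identity $\langle q,\lambda u\rangle=\norm{\lambda u}^2$ for all $q\in\partial J(u)$ is precisely relation \eqref{eq:minsub_general}, which the paper only establishes later, in Proposition~\ref{prop:minsub}, by a more laborious convex-combination and limiting argument (under the hypothesis that the minimal-norm subgradient is an eigenvector). Your computation shows that when the eigenvector relation holds for $u$ itself, that orthogonality is immediate. Your proofs of items 1 and 2, which the paper omits as trivial, are also sound as far as they can be.

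One caveat, which is a defect of the proposition as stated rather than of your argument: the equivalence ``$\lambda=0$ if and only if $u=0$'' is false in general, since every $u\in\calN(J)$ satisfies $0\in\partial J(u)$ -- the paper itself remarks immediately after the proposition that all null-space elements are eigenvectors with eigenvalue $0$ -- and likewise $u=0$ is an eigenvector for every $\lambda$, so even ``$\lambda\geq 0$'' needs $u\neq 0$. Your repair, restricting to $u\in\calN(J)^\perp$, is the right reading, but note that it is imported as a convention rather than derived: item 7 of Proposition~\ref{prop:properties_one-hom} constrains the subgradient $\lambda u$, which gives no information about $u$ precisely in the case $\lambda=0$ where you need it.
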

\begin{proof}
\secchange{We only proof the third item.} It holds for all $p\in\partial J(u)$ that
$$\lambda\norm{u}^2=\langle\lambda u,u\rangle=J(u)=\langle p,u\rangle\leq\norm{p}\norm{u}$$
and, since $\lambda\geq0$, we obtain $\norm{\lambda u}\leq\norm{p}$. The convexity of $\partial J(u)$ shows that $\lambda u$ is the unique element of minimal norm.
\end{proof}
\begin{rem}
In the following, we will simply talk about eigenvectors whilst suppressing the dependency upon $\partial J$, for brevity. 
\end{rem}

It is trivial that all elements in the null-space of $J$ are eigenvectors with eigenvalue $0$. However, these eigenvectors are only of minor interest as the example of total variation shows, where the null-space consists of all constant functions. Hence, in \cite{benning2013ground} so-called ground states where considered. These are eigenvectors in the orthogonal complement of the null-space with the lowest possible positive eigenvalue and, hence, the second largest eigenvalue of the operator $\partial J$. In our setting, these ground states correspond to vectors with minimal norm in the \change{relative boundary $\partial_\mathrm{rel}K$ of~$K$ which was defined in Proposition~\ref{prop:subgrad_bdry}}. 

\begin{prop}\label{ex:ground_states}
Let $u_0$ be a ground state of $J$, defined as
\begin{align}
&u_0\in\argmin_{\substack{u\in\calN(J)^\perp\\\norm{u}=1}}J(u),\label{eq:ground_state}
\end{align}
and let $\lambda_0:=J(u_0)$. Then $p_0:=\lambda_0 u_0$ is an element of minimal norm in $\partial_\mathrm{rel}K$ and an eigenvector.
\end{prop}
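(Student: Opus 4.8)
The plan is to prove both assertions at once by writing down $p_0=\lambda_0 u_0$ explicitly and checking it is a subgradient of $J$ at $u_0$, and then using the minimality that defines the ground state to control the norms of competing elements of $\partial_\mathrm{rel}K$. Everything hinges on one Rayleigh-type inequality: since $u_0$ minimizes $J$ over the unit sphere of $\calN(J)^\perp$, absolute one-homogeneity of $J$ yields
$$J(v)\ge\lambda_0\norm{v},\qquad\forall v\in\calN(J)^\perp,$$
with equality at $v=u_0$. I would first note that $\lambda_0>0$, since $u_0\in\calN(J)^\perp$ with $\norm{u_0}=1$ cannot lie in $\calN(J)$, so that $\lambda_0=J(u_0)>0$.

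First I would verify $p_0\in\partial J(u_0)$, i.e.\ that $p_0\in K=\partial J(0)$ and $\langle p_0,u_0\rangle=J(u_0)$. For membership, take any $v\in\calN(J)^\perp$ and combine Cauchy--Schwarz with the Rayleigh inequality:
$$\langle p_0,v\rangle=\lambda_0\langle u_0,v\rangle\le\lambda_0\norm{u_0}\norm{v}=\lambda_0\norm{v}\le J(v).$$
Since $p_0\in\calN(J)^\perp$ and, by item~3 of Proposition~\ref{prop:properties_one-hom}, $J(v)=J(v_\perp)$ for $v=v_\perp+v_0$ with $v_0\in\calN(J)$, the inequality $\langle p_0,v\rangle\le J(v)$ extends from $\calN(J)^\perp$ to all of $\H$; hence $p_0\in\partial J(0)=K$. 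The equality is immediate: $\langle p_0,u_0\rangle=\lambda_0\norm{u_0}^2=\lambda_0=J(u_0)$. Together these give $p_0=\lambda_0 u_0\in\partial J(u_0)$, so $u_0$ is an eigenvector with eigenvalue $\lambda_0$, settling the eigenvector claim. As $u_0\neq 0$, Proposition~\ref{prop:subgrad_bdry} then places $p_0$ in the relative boundary $\partial_\mathrm{rel}K$.

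It remains to show $p_0$ has minimal norm in $\partial_\mathrm{rel}K$, where $\norm{p_0}=\lambda_0$. I would bound the dual norm of an arbitrary $q\in\calN(J)^\perp$ using the same Rayleigh inequality: from \eqref{eq:dual_norm} and Cauchy--Schwarz,
$$\norm{q}_*=\sup_{u\in\calN(J)^\perp}\frac{\langle q,u\rangle}{J(u)}\le\norm{q}\,\sup_{u\in\calN(J)^\perp}\frac{\norm{u}}{J(u)}=\frac{\norm{q}}{\lambda_0}.$$
Thus $K$ contains the whole Hilbert ball $\{q\in\calN(J)^\perp\st\norm{q}\le\lambda_0\}$. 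If some $q\in\calN(J)^\perp$ had $\norm{q}<\lambda_0$, then taking $c=\lambda_0/\norm{q}>1$ (or any $c$ when $q=0$) gives $\norm{cq}=\lambda_0$, so $cq\in K$ and $q\in\mathrm{relint}(K)$; hence $q\notin\partial_\mathrm{rel}K$. Equivalently, every element of $\partial_\mathrm{rel}K$ has norm at least $\lambda_0$, and since $p_0\in\partial_\mathrm{rel}K$ attains $\norm{p_0}=\lambda_0$, it is of minimal norm.

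The argument is elementary once the Rayleigh inequality is in hand; the only points needing care are the reduction of the subgradient inequality from $\calN(J)^\perp$ to all of $\H$ via null-space invariance of $J$, and the interplay between the nonstandard notion of relative interior from Proposition~\ref{prop:subgrad_bdry} and the dual-norm description of $K$. I expect the bound $\norm{q}_*\le\norm{q}/\lambda_0$ -- equivalently, the statement that $K$ swallows a Hilbert ball of radius $\lambda_0$ -- to be the conceptual crux, since it simultaneously encodes the geometric meaning of the ground-state eigenvalue and delivers the minimal-norm property.
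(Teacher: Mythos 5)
Your proof is correct, and it differs from the paper's in a worthwhile way: it is self-contained. The paper simply cites \cite{benning2013ground} for the fact that ground states are eigenvectors, and then deduces $p_0\in\partial J(u_0)=\partial J(p_0)$; you instead verify $p_0\in\partial J(u_0)$ from scratch via the Rayleigh inequality $J(v)\geq\lambda_0\norm{v}$ on $\calN(J)^\perp$, Cauchy--Schwarz, and the reduction from $\calN(J)^\perp$ to all of $\H$ using items 3 and 7 of Proposition~\ref{prop:properties_one-hom} -- exactly the ingredients needed, correctly assembled (including the observation $\lambda_0>0$, without which item 6 of Proposition~\ref{prop:properties_one-hom} could not be invoked to pass from $\partial J(u_0)$ to $\partial J(p_0)$). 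For the minimal-norm claim, both proofs rest on the same Cauchy--Schwarz-plus-Rayleigh estimate, but your version is the sharper, quantitative form: the paper argues that $\norm{q}<\lambda_0$ forces the strict inequality $\langle q,u\rangle<J(u)$ and then asserts ``since this inequality is strict, $q\notin\partial_\mathrm{rel}K$,'' leaving implicit why pointwise strictness yields a uniform scaling factor; your bound $\norm{q}_*\leq\norm{q}/\lambda_0$ supplies that factor explicitly ($c=\lambda_0/\norm{q}>1$ with $cq\in K$), so $q\in\mathrm{relint}(K)$ in the precise sense defined in Proposition~\ref{prop:subgrad_bdry}. You also make explicit a point the paper glosses over: that $p_0$ actually \emph{belongs} to $\partial_\mathrm{rel}K$ (via Proposition~\ref{prop:subgrad_bdry} and $u_0\neq0$), which is needed before one can call it a minimizer there. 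The observation that $K$ contains the Hilbert ball of radius $\lambda_0$ in $\calN(J)^\perp$ is a nice geometric reformulation of the ground-state property that the paper does not state.
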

\begin{proof}
It has been shown in \cite{benning2013ground} that ground states $u_0$ in \eqref{eq:ground_state} exist under relatively weak assumptions\footnote{E.g. if $J$ meets \eqref{ineq:general_poincare}} and are eigenvectors with eigenvalue $\lambda_0$. Furthermore, $\lambda_0$ is per definitionem the smallest eigenvalue that a normalized eigenvector in $\calN(J)^\perp$ can have. Hence, $p_0:=\lambda_0u_0\in\partial J(u_0)=\partial J(p_0)$ which shows that $p_0$ is an eigenvector. Let us assume there is $q\in\partial_\mathrm{rel}K$ such that $\norm{q}<\norm{p_0}=\lambda_0$. This implies with the Cauchy-Schwarz inequality and the definition of $\lambda_0$
$$\langle q,u\rangle\leq\norm{q}\norm{u}<\lambda_0\norm{u}=J(u),\quad\forall u\in\calN(J)^\perp.$$
Since this inequality is strict, we can conclude that $q\notin\partial_\mathrm{rel}K$. Therefore, $\norm{p_0}$ is minimal, as claimed. 
\end{proof}

\section{Geometric characterization of eigenvectors}\label{sec:geo_char}

In this section, we give a novel geometric characterizing of eigenvectors. Simply speaking, eigenvectors $p$ (with eigenvalue 1) are exactly those vectors on the relative boundary of $K$ for which there exists a supporting hyperplane of $K$ through $p$ that is orthogonal to $p$. In other words, there is a multiple of the Hilbert unit ball which is tangential to $\partial_\mathrm{rel} K$ at $p$. All other eigenvectors are multiples of these. \change{In particular, the eigenvalue problem can be viewed as studying the \emph{relative geometry} of the unit balls in $\H$ and ${\cal V}^*$, respectively, where $\cal V$ is given by \eqref{eq:Banach_space}.} Since this geometric interpretation is not very handy in case of infinite dimensional Hilbert spaces (e.g. function spaces), we will only work with an algebraic characterization, in the following. We start with a lemma that allows us to limit ourselves to the study of eigenvectors with eigenvalue 1 without loss of generality.

\begin{lemma}
$u\in\H$ is an eigenvector with eigenvalue $\lambda>0$ if and only if $p:=\lambda u$ is an eigenvector with eigenvalue 1. 
\end{lemma}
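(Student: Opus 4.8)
The plan is to reduce the entire statement to the positive zero-homogeneity of the subdifferential recorded in item~6 of Proposition~\ref{prop:properties_one-hom}, namely that $\partial J(cu)=\partial J(u)$ for every $c>0$. This single fact collapses the two eigenvector relations into one another, so I expect the argument to be short and purely algebraic, with no genuine analytic obstacle. The only point requiring care is that the scaling invariance of item~6 is stated for positive constants, so the hypothesis $\lambda>0$ is exactly what makes the equivalence work; this is why the degenerate case $\lambda=0$ (where $p=0$ and $\partial J(0)$ is generally larger) is correctly excluded.

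First I would unfold the definitions. Saying that $u$ is an eigenvector with eigenvalue $\lambda>0$ means $\lambda u\in\partial J(u)$, and upon setting $p:=\lambda u$ this reads precisely as $p\in\partial J(u)$. On the other side, saying that $p$ is an eigenvector with eigenvalue $1$ means $p\in\partial J(p)$. Hence the equivalence I must establish is the chain
\begin{align*}
u\text{ eigenvector with eigenvalue }\lambda
\iff p\in\partial J(u)
\iff p\in\partial J(p)
\iff p\text{ eigenvector with eigenvalue }1.
\end{align*}
The first and last equivalences are merely restatements of the definition of eigenvector together with $p=\lambda u$, so the whole content sits in the middle equivalence $p\in\partial J(u)\iff p\in\partial J(p)$.

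To close this middle step I would apply item~6 of Proposition~\ref{prop:properties_one-hom} with the positive constant $c=\lambda$ and argument $u$, obtaining $\partial J(p)=\partial J(\lambda u)=\partial J(u)$. Since the two subdifferentials literally coincide as subsets of $\H$, membership of $p$ in one is identical to membership of $p$ in the other, and both implications follow at once. The main ``obstacle,'' such as it is, is simply to recognize that no inequality or optimality argument is needed here: the lemma is a bookkeeping consequence of the scale invariance of $\partial J$, and its real purpose is to license the normalization to eigenvalue $1$ used throughout the subsequent geometric characterization.
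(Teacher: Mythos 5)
Your proof is correct and is exactly the paper's argument: the paper's proof is the single line that the statement ``follows directly from 6.\ in Proposition~\ref{prop:properties_one-hom},'' and your write-up simply makes explicit the bookkeeping step $\partial J(p)=\partial J(\lambda u)=\partial J(u)$ that the paper leaves implicit. Nothing to add or fix.
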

\begin{proof}
The statement follows directly from 6. in Proposition~\ref{prop:properties_one-hom}.
\end{proof}

A key geometric characterization is provided by the following result: 

\begin{prop}\label{prop:characterization_eigenvectors}
$p\in K$ is an eigenvector with eigenvalue 1 if and only if
\begin{equation}
\langle p, p-q \rangle \geq 0 \qquad \forall q \in K. \label{ineq:eigenvectorcharacterization}
\end{equation}
\end{prop}  
\begin{proof}
It holds that $p$ is an eigenvector with eigenvalue 1 if and only if
$$\langle p,p\rangle=J(p)=\sup_{q\in K}\langle p,q\rangle.$$
This is equivalent to \eqref{ineq:eigenvectorcharacterization} which concludes the proof.
\end{proof}
 
\change{The statement of Proposition~\ref{prop:characterization_eigenvectors} is illustrated in Figure~\ref{fig:eigenvectors} which shows four different dual unit balls $K\subset\R^2$ together with all eigenvectors with eigenvalue one. It is obvious from the picture that any other vector on the boundary does not meet the boundary orthogonally.} 
 
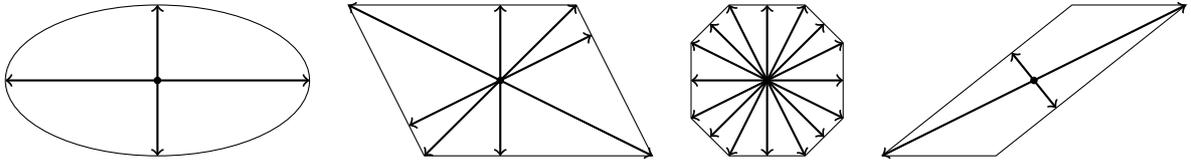
\begin{figure}[h!]
\centering
\begin{tikzpicture}[scale=1.0]
		\node at (0,0) [circle,fill,inner sep=1pt]{};
		\draw (0,0) circle [x radius=2, y radius=1];
		\draw[->,thick](0,0)--(0,1);
        \draw[->,thick](0,0)--(0,-1);
        \draw[->,thick](0,0)--(2,0);
        \draw[->,thick](0,0)--(-2,0);
\end{tikzpicture}
\hfill
\begin{tikzpicture}[scale=1.0]
		\node at (0,0) [circle,fill,inner sep=1pt]{};
        \draw (-1.,-1)--(2,-1)--(1,1)--(-2,1)--(-1.,-1);
        \draw[->,thick](0,0)--(0,1);
        \draw[->,thick](0,0)--(0,-1);
        \draw[->,thick](0,0)--(1,1);
        \draw[->,thick](0,0)--(-1,-1);        
        \draw[->,thick](0,0)--(2,-1);
        \draw[->,thick](0,0)--(-2,1);
        \draw[->,thick](0,0)--(1.2,0.6);
        \draw[->,thick](0,0)--(-1.2,-0.6);       
\end{tikzpicture}
\hfill
\begin{tikzpicture}[scale=1.0]
		\node at (0,0) [circle,fill,inner sep=1pt]{};
		\draw (1,0.5)--(0.5,1)--(-0.5,1)--(-1,0.5)--(-1,-0.5)
		--(-0.5,-1)--(0.5,-1)--(1,-0.5)--(1,0.5);
		\draw[->,thick](0,0)--(0,1);
		\draw[->,thick](0,0)--(0.5,1);
		\draw[->,thick](0,0)--(1,0);
		\draw[->,thick](0,0)--(1,0.5);		
		\draw[->,thick](0,0)--(0.75,0.75);
		\draw[->,thick](0,0)--(1,-0.5);
		\draw[->,thick](0,0)--(0.5,-1);			
		\draw[->,thick](0,0)--(0.75,-0.75);
		
		\draw[->,thick](0,0)--(0,-1);
		\draw[->,thick](0,0)--(-0.5,-1);
		\draw[->,thick](0,0)--(-1,0);
		\draw[->,thick](0,0)--(-1,-0.5);		
		\draw[->,thick](0,0)--(-0.75,-0.75);
		\draw[->,thick](0,0)--(-1,0.5);
		\draw[->,thick](0,0)--(-0.5,1);			
		\draw[->,thick](0,0)--(-0.75,0.75);
\end{tikzpicture}
\hfill
\begin{tikzpicture}[scale=1.0]
		\node at (0,0) [circle,fill,inner sep=1pt]{};
        \draw (-2,-1)--(-0.5,-1)--(2,1)--(0.5,1)--(-2,-1); 
		\draw[->,thick](0,0)--(2,1);	
   		\draw[->,thick](0,0)--(-0.2927,0.3659);      
   		\draw[->,thick](0,0)--(-2,-1);	
   		\draw[->,thick](0,0)--(0.2927,-0.3659);        
\end{tikzpicture}
\caption{\secchange{Four} different dual unit balls $K$ with all eigenvectors with eigenvalue one\label{fig:eigenvectors}}
\end{figure}

Proposition~\ref{prop:characterization_eigenvectors} can be used to obtain the following results:

\begin{cor}
Let $p$ be a point of maximal norm in $K$, i.e. $\Vert p \Vert \geq \Vert q \Vert$ for all $q \in K$. Then every positive multiple of $p$ is an eigenvector. 
\end{cor}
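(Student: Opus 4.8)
The plan is to reduce everything to the geometric characterization in Proposition~\ref{prop:characterization_eigenvectors}. That proposition characterizes eigenvectors with eigenvalue $1$ through the inequality $\langle p, p-q\rangle \ge 0$ for all $q \in K$, and the Lemma preceding it lets us pass from an eigenvector with eigenvalue $1$ to all of its positive multiples. Hence it suffices to verify that a point $p$ of maximal norm in $K$ satisfies this single inequality.

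First I would fix an arbitrary $q \in K$ and estimate $\langle p, q\rangle$ by the Cauchy--Schwarz inequality, $\langle p, q\rangle \le \norm{p}\,\norm{q}$. The maximality assumption $\norm{q} \le \norm{p}$ then immediately gives $\langle p, q\rangle \le \norm{p}^2 = \langle p,p\rangle$, that is, $\langle p, p-q\rangle \ge 0$. Since $q \in K$ was arbitrary, this is precisely condition \eqref{ineq:eigenvectorcharacterization}, so Proposition~\ref{prop:characterization_eigenvectors} yields that $p$ itself is an eigenvector with eigenvalue $1$.

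Finally I would invoke the preceding Lemma (equivalently, item 6 of Proposition~\ref{prop:properties_one-hom} together with the remark on positive zero-homogeneity): if $p$ is an eigenvector with eigenvalue $1$, then for every $c>0$ the vector $cp$ is an eigenvector with eigenvalue $1/c$, which establishes the claim for all positive multiples.

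There is no genuine obstacle here; the whole content is that, relative to the Hilbert inner product, a norm-maximizing point of a convex set automatically has the set lying on one side of the orthogonal hyperplane through it---exactly the Cauchy--Schwarz step above. The only matters requiring any care are bookkeeping ones: confirming $p \in K$ so that Proposition~\ref{prop:characterization_eigenvectors} applies (immediate, since $p$ is chosen in $K$), and correctly tracking the eigenvalue $1/c$ when moving to positive multiples via the Lemma.
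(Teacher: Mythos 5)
Your proposal is correct and follows exactly the route the paper intends: the corollary is stated as an immediate consequence of Proposition~\ref{prop:characterization_eigenvectors}, and the Cauchy--Schwarz estimate $\langle p,q\rangle\leq\norm{p}\norm{q}\leq\norm{p}^2$ combined with maximality is precisely the verification of \eqref{ineq:eigenvectorcharacterization}, with positive multiples handled by the preceding Lemma. Nothing is missing; your bookkeeping of the eigenvalue $1/c$ for the multiple $cp$ is also accurate.
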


\begin{example}
Consider the linear eigenvalue problem for a symmetric and positively semi-definite matrix $A\in\R^{n\times n}$. The corresponding functional is given by $J(u) = \sqrt{\langle u, A u \rangle}$ and $K$ is an ellipsoid. Along the main axes of the ellipsoid the hypersurface $p+\mathrm{span}\{p^\bot\}$ is tangential, hence the main axes define the eigenvectors. 
\end{example}

\begin{rem}[Existence of nonlinear spectral decompositions]\label{rem:existence_of_eigenbases}
Note that unlike in the above-noted linear case, where there are exactly $n$ different eigendirections, nonlinear eigenvectors in our setting may constitute an overcomplete generating set of the ambient space, as it can be seen in Figure~\ref{fig:eigenvectors}. The leftmost set corresponds to the linear case and has two different eigendirections. \change{In contrast, in the non-linear cases one can have significantly more different eigendirections which makes the set of eigenfunctions an overcomplete system in these examples. Whether this is true in general, remains an open question. However, in Section~\ref{sec:examples} we give several relevant examples where there are sufficiently many nonlinear eigenvectors to represent any datum as linear combination of such vectors. Note that from the second and fourth set in Figure~\ref{fig:eigenvectors} it also becomes clear that in the nonlinear case one cannot expect to have orthogonal eigenvectors as it is the case for compact self-adjoint linear operators.}
\end{rem}

\change{Next we investigate for which elements $q\in K$ the characterizing inequality \eqref{ineq:eigenvectorcharacterization} for eigenvectors is actually an equality.}

\begin{prop}\label{prop:minsub}
Let $u\in\dom(\partial J)$ and ${p}:=\argmin\left\lbrace\norm{q}\st q\in\partial J(u)\right\rbrace$ be an eigenvector with eigenvalue 1. Then it holds
\begin{align}\label{eq:minsub_general}
\langle {p},{p}-q\rangle =0,\quad\forall q\in\partial J(u),
\end{align}
which can be reformulated as $\partial J(u)\subset\partial J({p}).$
\end{prop}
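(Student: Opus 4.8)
The plan is to obtain the asserted equality by squeezing the quantity $\langle p,p-q\rangle$ between two opposite inequalities: one coming from the fact that $p$ is an eigenvector with eigenvalue $1$, and one coming from the fact that $p$ is the element of minimal norm in the convex set $\partial J(u)$.

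First I would record the inequality from the eigenvector property. Since $p$ is an eigenvector with eigenvalue $1$, Proposition~\ref{prop:characterization_eigenvectors} gives $\langle p,p-q\rangle\geq 0$ for every $q\in K$. By item 5 of Proposition~\ref{prop:properties_one-hom} we have $\partial J(u)\subset\partial J(0)=K$, so in particular
\begin{align*}
\langle p,p-q\rangle\geq 0,\quad\forall q\in\partial J(u).
\end{align*}
Next I would exploit that $p$ is the minimal-norm element of the convex set $\partial J(u)$, i.e. $p$ is the orthogonal projection of $0$ onto $\partial J(u)$. The standard variational (projection) inequality for minimizing $\tfrac12\norm{\cdot}^2$ over a convex set -- which follows by testing the minimality of $p$ against the competitors $p+t(q-p)$ for $t\in(0,1]$, expanding the square, dividing by $t$ and letting $t\to 0^+$ -- yields $\langle p,q-p\rangle\geq 0$, that is,
\begin{align*}
\langle p,p-q\rangle\leq 0,\quad\forall q\in\partial J(u).
\end{align*}
Combining the two displays immediately gives the desired equality \eqref{eq:minsub_general}.

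For the reformulation $\partial J(u)\subset\partial J(p)$, I would check the two defining conditions of $\partial J(p)$ for an arbitrary $q\in\partial J(u)$. The inequality $\langle q,v\rangle\leq J(v)$ for all $v\in\H$ is automatic, since $q\in\partial J(u)\subset K=\partial J(0)$. For the equality condition $\langle q,p\rangle=J(p)$, the just-proved relation \eqref{eq:minsub_general} gives $\langle q,p\rangle=\langle p,p\rangle=\norm{p}^2$, while the eigenvector relation $p\in\partial J(p)$ (eigenvalue $1$) gives $J(p)=\langle p,p\rangle=\norm{p}^2$; hence $\langle q,p\rangle=J(p)$ and therefore $q\in\partial J(p)$.

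I do not expect a serious obstacle here: the content is essentially recognizing that the minimal-norm (projection) variational inequality provides exactly the reverse of the eigenvector characterization from Proposition~\ref{prop:characterization_eigenvectors}. The only point requiring care is the direction of the signs in the projection inequality and the consistent use of $\lambda=1$, so that $J(p)=\norm{p}^2$ matches $\langle q,p\rangle$.
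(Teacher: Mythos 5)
Your proof is correct and follows essentially the same route as the paper's: both obtain the equality by squeezing $\langle p,p-q\rangle$ between the eigenvector inequality ($\geq 0$, via Proposition~\ref{prop:characterization_eigenvectors} and $\partial J(u)\subset K$) and a reverse inequality obtained by testing the norm minimality of $p$ against convex combinations $(1-\lambda)p+\lambda q\in\partial J(u)$ and letting $\lambda\to 0^+$ --- your projection variational inequality is exactly the paper's computation in its cleaner, standard form. Your explicit verification of the reformulation $\partial J(u)\subset\partial J(p)$, which the paper asserts without proof, is also correct.
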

\begin{proof}
The non-negativity of the left-hand side follows directly from the assumption that ${p}$ is an eigenvector and thus fulfills~\eqref{ineq:eigenvectorcharacterization}. For the other inequality, we let $q\in\partial J(u)$ arbitrary and consider $r:=\lambda q+(1-\lambda){p}$ for $\lambda\in(0,1)$, which is in $\partial J(u)$, as well, due to convexity. Using \eqref{ineq:eigenvectorcharacterization} and the minimality of $\norm{{p}}$ yields
\begin{align*}
\lambda\langle{p},q\rangle+(1-\lambda)\norm{{p}}^2=\langle{p},r\rangle\leq\norm{{p}}^2\leq\norm{r}^2=\lambda^2\norm{q}^2+(1-\lambda)^2\norm{{p}}^2+2\lambda(1-\lambda)\langle{p},q\rangle.
\end{align*}
Dividing this by $\lambda(1-\lambda)$ one finds
\begin{align*}
\frac{1}{1-\lambda}\langle{p},q\rangle+\frac{1}{\lambda}\norm{{p}}^2\leq\frac{\lambda}{1-\lambda}\norm{q}^2+\frac{1-\lambda}{\lambda}\norm{{p}}^2+2\langle{p},q\rangle,
\end{align*}
which can be simplified to 
\begin{align*}
\frac{1}{1-\lambda}\langle{p},q\rangle\leq\frac{\lambda}{1-\lambda}\norm{q^2}-\norm{{p}}^2+2\langle{p},q\rangle.
\end{align*}
Letting $\lambda$ tend to zero and reordering shows $\langle {p},{p}-q\rangle\leq0$, hence equality holds.
\end{proof}

The converse statement of  Proposition \ref{prop:minsub} is false in general. This can be seen by choosing $K\subset\R^n$ to be an ellipsoid. In this case all subdifferentials $\partial J(u)$ are singletons since the boundary of an ellipsoid does not contain convex sets consisting of two or more points. Hence, \eqref{eq:minsub_general} is always met but not every boundary point has an orthogonal tangent hyperplane. However, the converse is true in finite dimensions if $K\subset\R^n$ is a polyhedron. \change{In \cite{burger2016spectral} the authors introduced condition \eqref{eq:minsub_general}, which they termed (MINSUB), to study the case of polyhedral functionals $J$, meaning that the set $K$ is a polyhedron. Using (MINSUB) together with another, relatively strong, condition they were able to prove the converse of Proposition~\ref{prop:minsub}, namely that all subgradients of minimal norm are eigenvectors. Below we show that in fact the other condition is superfluous.}

\begin{prop} \label{polyhedraleigenvectors}
Let $K\subset\R^n$ be a convex polyhedron such that for all $u\in\R^n$ the element ${p}:=\argmin\left\lbrace\norm{q}\st q\in\partial J(u)\right\rbrace$ satisfies condition
\begin{align}\label{eq:minsub}
\langle{p},{p}-q\rangle=0,\quad\forall q\in\partial J(u)\tag{MINSUB}
\end{align}
from \cite{burger2016spectral}. Then ${p}$ is an eigenvector.
\end{prop}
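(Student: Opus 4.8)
The plan is to verify the geometric criterion of Proposition~\ref{prop:characterization_eigenvectors}. Since $p\in\partial J(u)\subset K$ by item~5 of Proposition~\ref{prop:properties_one-hom}, it suffices to show $\langle p,p-q\rangle\ge 0$ for all $q\in K$, i.e.\ that $p$ maximizes the linear functional $\ell:=\langle p,\cdot\rangle$ over $K$. The starting point is a reinterpretation of \eqref{eq:minsub}: writing $F:=\partial J(u)$, the identity $\langle p,p-q\rangle=0$ for all $q\in F$ says precisely that $p$ lies in the affine hull $\mathrm{aff}(F)$ and is orthogonal to its direction space, hence $p=\mathrm{proj}_{\mathrm{aff}(F)}(0)$ is the orthogonal projection of the origin onto $\mathrm{aff}(F)$, and the whole face $F$ is contained in the hyperplane $\{x\in\R^n\st\langle p,x\rangle=\norm{p}^2\}$. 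Since every face of a polytope is exposed, I may replace $F$ by the unique (again exposed) minimal face of $K$ whose relative interior contains $p$; this changes neither $p$ nor the claim, and $p$ is still $\mathrm{proj}_{\mathrm{aff}(F)}(0)$, so henceforth $p\in\mathrm{relint}(F)$.

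Because $\ell$ is linear and $K$ is convex, a local maximizer is automatically a global one, so it is enough to check $\langle p,d\rangle\le 0$ for every $d$ in the tangent cone $T_K(p)$. Exploiting that $K$ is a polytope and that $p$ lies in the relative interior of $F$, this cone decomposes as the lineality space $\mathrm{dir}(F):=\mathrm{aff}(F)-p$ together with the cone generated by one outgoing direction $d_{F'}$ for each face $F'$ covering $F$ (i.e.\ $F\subsetneq F'$ with $\dim F'=\dim F+1$). On $\mathrm{dir}(F)$ the functional $\ell$ vanishes identically, since $p\perp\mathrm{dir}(F)$. Hence everything reduces to controlling the sign of $\ell$ on the finitely many outgoing directions $d_{F'}$.

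For the covering faces I invoke \eqref{eq:minsub} once more: as $F'$ is exposed, $F'=\partial J(u')$ for some $u'$, and the hypothesis supplies its minimal-norm element $p'=\mathrm{proj}_{\mathrm{aff}(F')}(0)\in F'$, which satisfies $p'\perp\mathrm{dir}(F')$ and, since $\mathrm{aff}(F)\subseteq\mathrm{aff}(F')$, also $\norm{p'}\le\norm{p}$. Projecting in stages yields $p=\mathrm{proj}_{\mathrm{aff}(F)}(p')$, whence $p'-p\in\mathrm{dir}(F')$ and $\langle p',p'-p\rangle=0$; combining these gives the crucial sign $\langle p,p'-p\rangle=-\norm{p'-p}^2\le 0$. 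If $p'\neq p$ then $p'\notin\mathrm{aff}(F)$ and the segment $[p,p']\subset F'$ shows that $p'-p$ is a positive multiple of the outgoing direction $d_{F'}$ modulo $\mathrm{dir}(F)$, on which $\ell$ vanishes anyway, so $\langle p,d_{F'}\rangle\le 0$; if $p'=p$ then $p\perp\mathrm{dir}(F')$ forces $\langle p,d_{F'}\rangle=0$. Either way $\ell\le 0$ on all generators of $T_K(p)$, so $p$ is a local, hence global, maximizer of $\ell$ over $K$, i.e.\ $\norm{p}^2=\langle p,p\rangle=\sup_{q\in K}\langle p,q\rangle=J(p)$, which is the eigenvector relation with eigenvalue $1$.

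I expect the main obstacle to be the polytopal tangent-cone decomposition and the verification that the minimal-norm point $p'$ of a covering face points along the outgoing direction $d_{F'}$; this is exactly where the polyhedral structure and the validity of \eqref{eq:minsub} at the \emph{neighbouring} faces interlock, and where the additional hypothesis used in \cite{burger2016spectral} turns out to be dispensable. The monotonicity $\norm{p'}\le\norm{p}$ under enlarging the affine hull, combined with the stagewise-projection identity, is the analytic heart of the argument; the remainder is bookkeeping about the face lattice of $K$.
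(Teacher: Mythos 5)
Your proof is correct, and it takes a genuinely different --- and in fact more careful --- route than the paper's. Both arguments reduce the claim to the criterion of Proposition~\ref{prop:characterization_eigenvectors}, i.e.\ to showing $\langle p,p-q\rangle\geq 0$ for all $q\in K$. The paper gets there in one step: it regards $F=\partial J(u)$ as a facet of $K$, notes that \eqref{eq:minsub} places $F$ inside the hyperplane $\mathcal{S}=\{q\in\R^n\st\langle q,p\rangle=\norm{p}^2\}$ orthogonal to $p$, and asserts that convexity of $K$ makes $\mathcal{S}$ supporting. That assertion is automatic only when $F$ has codimension one, so that $\mathcal{S}$ coincides with the affine hull of $F$; for lower-dimensional exposed faces --- e.g.\ vertices, where \eqref{eq:minsub} holds vacuously --- a hyperplane through the face orthogonal to $p$ need not support $K$ (in the rightmost polyhedron of Figure~\ref{fig:eigenvectors} every vertex satisfies \eqref{eq:minsub} trivially, yet two of the four vertices are not eigenvectors), so any complete proof must invoke the hypothesis at faces other than $F$ itself. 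This is exactly what your argument does: after reducing to $p\in\mathrm{relint}(F)$, you decompose the tangent cone at $p$ into the lineality space $\mathrm{dir}(F)$, on which $\langle p,\cdot\rangle$ vanishes, plus one ray per covering face $F'$, and you use \eqref{eq:minsub} at each $F'$ --- via the stagewise projection identity $p=\mathrm{proj}_{\mathrm{aff}(F)}(p')$ and $\langle p',p'-p\rangle=0$ --- to get the sign $\langle p,p'-p\rangle=-\norm{p'-p}^2\leq 0$ on each ray. In short, the paper's argument buys brevity and a clean supporting-hyperplane picture that is rigorous in the facet case, while yours buys a complete treatment of faces of arbitrary dimension and makes visible where the global hypothesis (\eqref{eq:minsub} for \emph{all} $u$, in particular at the faces neighbouring $F$) genuinely enters; the example above shows that some such use of neighbouring faces is unavoidable.
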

\begin{proof}
Let us fix $u\in\R^n$ and let ${p}$ be the element of minimal norm in $\partial J(u)$. By the definition of the subdifferential and by Proposition~\ref{prop:subgrad_bdry} we infer that $\partial J(u)$ -- being the intersection of $K$ and the hypersurface $\{q\in\R^n\st\langle q,u\rangle=J(u)\}$ -- must coincide with a facet $F$ of the polyhedron.  Due to \eqref{eq:minsub}, the set $\mathcal{S}:=\{q\in\R^n\st\langle q,{p}\rangle=\norm{{p}}^2\}$ defines a hypersurface through ${p}$ such that $F\subset\mathcal{S}$ and $\mathcal{S}$ is orthogonal to ${p}$. Since $K$ is convex, all other points in $K$ lie on one side of $\mathcal{S}$ which implies that $\mathcal{S}$ is supporting $K$ and hence $\langle{p},{p}-q\rangle\geq0$ for all $q\in K$. With Proposition~\ref{prop:characterization_eigenvectors} we conclude that ${p}$ is an eigenvector with eigenvalue 1.
\end{proof}
\begin{rem}
Notably, the minimality of $\norm{{p}}$ does not play a role in the proof of Proposition~\ref{polyhedraleigenvectors}. However, from the Cauchy-Schwarz inequality it follows that only subgradients of minimal norm can satisfy \eqref{eq:minsub}.
\end{rem}

\change{Figure~\ref{fig:minsub} shows two polyhedrons together with a subgradient of minimal norm in the subdifferential marked in red. The left polyhedron meets \eqref{eq:minsub} but the right one does not, because it is too distorted. Consequently, for the left polyhedron the subgradient of minimal norm is an eigenvector whereas this is not true for the right one.}

\begin{figure}[h!]
\centering
\begin{tikzpicture}
		\node at (0,0) [circle,fill,inner sep=1pt]{};	
		\draw (-1.5,-0.5) node[anchor = east] {$\mathcal{S}$};
		\draw[dashed] (-1.5,-0.5)--(0.5,-0.5);	
		\draw[red, thick] (-1,-0.5)--(-0,-0.5);
        \draw[thick] (-0,-0.5)--(1,0.5)--(0,0.5)--(-1,-0.5);
        \draw[->,thick] (0,0)--(-0,-0.5);
        \draw (0,-0.5) node[anchor=north] {${p}$};
\end{tikzpicture}
\hspace*{2cm}
\begin{tikzpicture}
		\node at (0,0) [circle,fill,inner sep=1pt]{};
		\draw (-1.2,-0.15) node[anchor = east] {$\mathcal{S}$};
		\draw[dashed] (-1.2,-0.15)--(0.3,-0.65);		
		\draw[red, thick] (-1.2,-0.5)--(-0.2,-0.5);
        \draw[thick] (-0.2,-0.5)--(1.2,0.5)--(0.2,0.5)--(-1.2,-0.5);
        \draw[->,thick] (0,0)--(-0.2,-0.5);
        \draw (-0.2,-0.5) node[anchor=north] {${p}$};
\end{tikzpicture}
\caption{\textbf{Left:} \eqref{eq:minsub} is met since $\mathcal{S}$ is supporting, \textbf{Right:} here \eqref{eq:minsub} is violated, i.e., $\mathcal{S}$ is not supporting and ${p}$ is no eigenvector\label{fig:minsub}}
\end{figure}
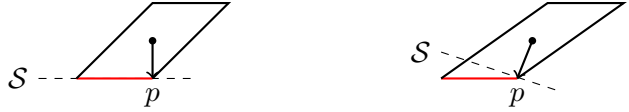

\section{Spectral decompositions by gradient flows}\label{sec:spec_dec_GF}

The fact that eigenvectors are subgradients of minimal norms motivates to further study processes that generate such subgradients. Indeed, the theory of maximal monotone evolution equations shows that gradient flows have this desirable property.

\subsection{Gradient flow theory from maximal monotone evolution equations}
In this section we give a concise recap of the theory of non-linear evolution equations due to Brezis \cite{brezis1973ope}, \change{see also \cite{komura1967nonlinear} for an earlier existence result due to K\={o}mura.} The theory deals with the solution of the differential inclusion
\begin{align}\label{eq:general_flow}
\begin{cases}
\partial_t u(t)+Au(t)\ni0,\\
u(0)=f,
\end{cases}
\end{align}
for times $t>0$. Here $A$ denotes a potentially \emph{non-linear} and \emph{multi-valued} operator defined on a subset $\dom(A):=\{u\in\H\st Au\neq\emptyset\}$ and is assumed to be \emph{maximal monotone}. That is,
\begin{align}
\langle p-q,u-v\rangle\geq0,\quad\forall p\in Au,\;q\in Av,
\end{align}
and $A$ cannot be extended to a monotone operator with larger domain (see \cite{brezis1973ope} for a precise definition). Furthermore, one defines
\begin{align}
A^0u:=\argmin\{\norm{p}\st p\in Au\},\;u\in\dom(A),
\end{align}
which is the norm-minimal element in the convex set $Au$.

For these class of operators one has the following
\begin{thm}[Brezis, 1973]\label{thm:brezis1}
For all $f\in\dom(A)$ there exists a unique function $u:[0,\infty)\to\H$ such that
\begin{enumerate}
\item $u(t)\in\dom(A)$ for all $t>0$
\item $u$ is Lipschitz continuous on $[0,\infty)$, i.e., $\partial_t u\in L^\infty(0,\infty;\H)$ (as distributional derivative) and it holds
\begin{align}
\norm{\partial_t u}_{L^\infty(0,\infty;\H)}\leq\norm{A^0f}
\end{align}
\item \eqref{eq:general_flow} holds for almost every $t\in(0,\infty)$
\item $u$ is right-differentiable for \emph{all} $t\in(0,\infty)$ and it holds
\begin{align}
\partial_t^+u(t)+A^0u(t)=0,\quad\forall t\in(0,\infty)
\end{align}
\item The function $t\mapsto A^0u(t)$ is right-continuous and the function $t\mapsto\norm{A^0u(t)}$ is non-increasing
\end{enumerate} 
\end{thm}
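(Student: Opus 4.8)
The plan is to follow the classical resolvent and Yosida approximation scheme for maximal monotone operators. The starting point is Minty's theorem: since $A$ is maximal monotone, the operator $I+\lambda A$ is surjective for every $\lambda>0$, so the resolvent $R_\lambda:=(I+\lambda A)^{-1}$ is a single-valued nonexpansive map defined on all of $\H$, and the Yosida regularization $A_\lambda:=\frac1\lambda(I-R_\lambda)$ is everywhere defined, monotone, and $\frac1\lambda$-Lipschitz, satisfying $A_\lambda u\in A(R_\lambda u)$ as well as the bound $\norm{A_\lambda u}\leq\norm{A^0 u}$ for $u\in\dom(A)$. Uniqueness of the solution, together with the contraction property of the associated semigroup $S(t):f\mapsto u(t)$, follows immediately from monotonicity: if $u,v$ solve \eqref{eq:general_flow} with subgradients $p\in Au$, $q\in Av$, then $\tfrac{d}{dt}\tfrac12\norm{u-v}^2=\langle\partial_t u-\partial_t v,u-v\rangle=-\langle p-q,u-v\rangle\leq0$, so $\norm{u(t)-v(t)}$ is nonincreasing.

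For existence, I would first solve the regularized problem $\partial_t u_\lambda+A_\lambda u_\lambda=0$, $u_\lambda(0)=f$; since $A_\lambda$ is globally Lipschitz, Picard--Lindel\"of yields a unique $C^1$ solution on $[0,\infty)$. The key a priori estimates are $\norm{\partial_t u_\lambda(t)}=\norm{A_\lambda u_\lambda(t)}\leq\norm{A^0 f}$ and the monotonicity of $t\mapsto\norm{A_\lambda u_\lambda(t)}$, both obtained by differentiating and using monotonicity of $A_\lambda$. Writing $u_\lambda=R_\lambda u_\lambda+\lambda A_\lambda u_\lambda$ and exploiting the monotonicity of $A$ evaluated at $R_\lambda u_\lambda$ and $R_\mu u_\mu$, a short computation gives $\tfrac{d}{dt}\tfrac12\norm{u_\lambda-u_\mu}^2\leq(\lambda+\mu)\norm{A^0 f}^2$ and hence $\norm{u_\lambda(t)-u_\mu(t)}^2\leq 2(\lambda+\mu)\,t\,\norm{A^0 f}^2$, so $\{u_\lambda\}$ is Cauchy in $C([0,T];\H)$. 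The limit $u$ is Lipschitz with $\norm{\partial_t u}_{L^\infty}\leq\norm{A^0 f}$, establishing items 1 and 2. Since $\norm{R_\lambda u_\lambda-u_\lambda}=\lambda\norm{A_\lambda u_\lambda}\leq\lambda\norm{A^0 f}$ forces $R_\lambda u_\lambda\to u$ uniformly, while $A_\lambda u_\lambda=-\partial_t u_\lambda\weak-\partial_t u$ weakly in $L^2(0,T;\H)$, the demiclosedness of the graph of the maximal monotone operator $A$ yields $-\partial_t u(t)\in Au(t)$ for a.e.\ $t$, which is item 3.

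Items 4 and 5 are the delicate part, and I expect the identification of the right derivative with the \emph{minimal} section $-A^0 u(t)$ to be the main obstacle. The central observation is a consequence of the contraction property combined with the semigroup law $u(t+h)=S(h)u(t)$: for $s\leq t$ one has $\norm{u(t+h)-u(t)}=\norm{S(t-s)u(s+h)-S(t-s)u(s)}\leq\norm{u(s+h)-u(s)}$, so for each fixed $h$ the map $s\mapsto\tfrac1h\norm{u(s+h)-u(s)}$ is nonincreasing. I would first treat $t=0$: the difference quotients $\tfrac{u(h)-f}{h}$ are bounded by $\norm{A^0 f}$, so along a sequence $h_n\to0^+$ they converge weakly to some $w$; using monotonicity against arbitrary graph elements $[g,z]$ of $A$ one identifies $-w\in Af$, and the norm bound $\norm{w}\leq\norm{A^0 f}$ together with minimality forces $w=-A^0 f$, which also upgrades the convergence to strong and removes the dependence on the sequence. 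Restarting the flow at $u(t)\in\dom(A)$ then gives $\partial_t^+u(t)=-A^0 u(t)$ for all $t>0$, which is item 4. Finally, the monotonicity of $s\mapsto\tfrac1h\norm{u(s+h)-u(s)}$ passes to the limit $h\to0^+$ to show that $t\mapsto\norm{A^0 u(t)}$ is nonincreasing, and right-continuity of $t\mapsto A^0 u(t)$ follows by combining weak right-continuity of $\partial_t^+u$ with convergence of norms, giving item 5.
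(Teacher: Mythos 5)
Your proposal is correct: it reproduces the classical resolvent/Yosida approximation argument of Brezis — global solvability of the Lipschitz-regularized flows, the Cauchy estimate $\norm{u_\lambda(t)-u_\mu(t)}^2\leq 2(\lambda+\mu)t\norm{A^0f}^2$, demiclosedness of the maximal monotone graph for the a.e.\ inclusion, and the weak-compactness-plus-minimality argument that identifies the right derivative with $-A^0u(t)$ and yields the monotone, right-continuous norm. The paper gives no proof of its own here (it only cites Theorem~3.1 of Brezis's 1973 monograph), and your sketch is exactly the proof found in that reference, so the two approaches coincide.
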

\begin{proof}
For the proof see \cite[Theorem~3.1]{brezis1973ope}.
\end{proof}
A important instance of maximally monotone operators are subdifferentials $\partial J$ of lower semi-continuous convex functionals $J:\H\to\R\cup\{+\infty\}$. For these one can relax the assumption $f\in\dom(\partial J)$ to $f\in\overline{\dom(\partial J)}$ and obtains
\begin{thm}[Brezis, 1973]\label{thm:brezis2}
Let $A=\partial J$ where $J:\H\to\R\cup\{+\infty\}$ is lower semi-continuous, convex, and proper, and let $f\in\overline{\dom(A)}$. Then there exists a unique continuous function $u:[0,\infty)\to\H$ with $u(0)=f$ such that
\begin{enumerate}
\item $u(t)\in\dom(A)$ for all $t>0$
\item $u$ is Lipschitz continuous on $[\delta,\infty)$ for all $\delta>0$ and it holds
\begin{align}
\norm{\partial_tu}_{L^\infty(\delta,\infty;\H)}\leq\norm{A^0v}+\frac{1}{\delta}\norm{f-v},\quad\forall v\in\dom(A),\;\forall\delta>0
\end{align}
\item $u$ is right-differentiable for \emph{all} $t\in(0,\infty)$ and it holds
\begin{align}
\partial_t^+u(t)+A^0u(t)=0,\quad\forall t\in(0,\infty)
\end{align}
\item The function $t\mapsto {A^0u(t)}$ is right-continuous for all $t>0$ and the function $t\mapsto\norm{A^0 u(t)}$ is non-increasing
\item The function $t\mapsto J(u(t))$ is convex, non-increasing, Lipschitz continuous on $[\delta,\infty)$ for all $\delta>0$ and it holds
\begin{align}
\frac{\d^+}{\d t} J(u(t))=-\norm{\partial_t^+u(t)}^2,\quad\forall t>0
\end{align}
\end{enumerate}
\end{thm}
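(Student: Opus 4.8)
The plan is to treat this as the classical regularizing-semigroup theorem of Brezis and to reduce everything to the already-stated Theorem~\ref{thm:brezis1}, which covers the case $f\in\dom(A)$. The new content relative to that theorem is twofold: the relaxation of the initial datum to $f\in\overline{\dom(A)}$, and the quantitative regularizing estimate of item~2 on $[\delta,\infty)$; items~1, 3, 4 will then be inherited in the limit and item~5 proved by the chain rule along the flow. I would organize the whole argument around the contraction semigroup generated by $-A$.

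First I would record the contraction property. For $f,g\in\dom(A)$ with corresponding solutions $u,w$ from Theorem~\ref{thm:brezis1}, the function $t\mapsto\tfrac12\norm{u(t)-w(t)}^2$ is non-increasing because
\[
\frac{\d}{\d t}\tfrac12\norm{u(t)-w(t)}^2=-\langle A^0u(t)-A^0w(t),\,u(t)-w(t)\rangle\le0
\]
by monotonicity of $A=\partial J$. Hence $\norm{u(t)-w(t)}\le\norm{f-g}$, so the solution map $S(t)\colon f\mapsto u(t)$ is a contraction on $\dom(A)$, uniformly in $t$. Since $\dom(A)$ is dense in $\overline{\dom(A)}$, $S(t)$ extends uniquely to a strongly continuous contraction semigroup on $\overline{\dom(A)}$, and for $f\in\overline{\dom(A)}$ I would define $u(t):=S(t)f=\lim_n S(t)f_n$ for any sequence $\dom(A)\ni f_n\to f$. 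The contraction bound makes this limit uniform on $[0,\infty)$, yielding a continuous $u$ with $u(0)=f$ together with uniqueness.

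The heart of the proof, and the step I expect to be the main obstacle, is the regularizing estimate of item~2, which must hold even though $f\notin\dom(A)$ and therefore $\norm{A^0 f}$ may be infinite. I would first establish it at the level of data in $\dom(A)$: fixing $v\in\dom(A)$ and comparing the solution $u$ from $f_n$ with the solution started at $v$, one combines non-expansiveness with the fact (from Theorem~\ref{thm:brezis1}) that $t\mapsto\norm{A^0u(t)}$ is non-increasing and that the comparison solution moves with speed at most $\norm{A^0 v}$, to derive
\[
\norm{A^0 u(t)}\le\norm{A^0 v}+\frac1t\norm{f_n-v}.
\]
For $t\ge\delta$ the right-hand side is bounded uniformly in $n$ by $\norm{A^0 v}+\tfrac1\delta\norm{f_n-v}\to\norm{A^0 v}+\tfrac1\delta\norm{f-v}$. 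The delicate point is the passage to the limit, since $A^0$ is only the minimal selection of a multivalued operator and is not continuous: using the uniform bound I would extract a weak limit $\xi$ of $A^0u_n(t)$, invoke closedness of the graph of the maximal monotone operator $A$ in the strong$\times$weak topology to conclude $\xi\in Au(t)$, and use weak lower semicontinuity of the norm to get $\norm{A^0u(t)}\le\norm{\xi}\le\norm{A^0 v}+\tfrac1\delta\norm{f-v}$. This simultaneously yields item~1 (instantaneous regularization, $u(t)\in\dom(A)$ for $t>0$) and the Lipschitz bound of item~2; items~3--4 then transfer from Theorem~\ref{thm:brezis1} by restarting the flow via the semigroup identity $u(t+s)=S(s)u(t)$ from $u(t)\in\dom(A)$.

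Finally, for item~5 I would use that on every $[\delta,\infty)$ the curve $u$ is Lipschitz, hence absolutely continuous, so the chain rule for the convex function $J$ along $u$ gives $\tfrac{\d}{\d t}J(u(t))=\langle p(t),\partial_t u(t)\rangle$ for any selection $p(t)\in\partial J(u(t))$ (the value being independent of the choice); taking $p(t)=A^0u(t)=-\partial_t^+u(t)$ yields $\tfrac{\d^+}{\d t}J(u(t))=-\norm{\partial_t^+u(t)}^2$. Monotonicity of $t\mapsto\norm{A^0u(t)}$ shows this right derivative is non-decreasing, whence $t\mapsto J(u(t))$ is convex and non-increasing, completing the statement.
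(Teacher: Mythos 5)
Your outline fails at exactly the step you yourself identify as the heart of the proof: the regularizing estimate of item~2. You claim that $\norm{A^0u_n(t)}\le\norm{A^0v}+\tfrac1t\norm{f_n-v}$ follows from (a) non-expansiveness of the flow, (b) the monotonicity of $t\mapsto\norm{A^0u_n(t)}$, and (c) the speed bound $\norm{\partial_t w}\le\norm{A^0v}$ for the comparison solution started at $v$. But these three facts are precisely consequences of Theorem~\ref{thm:brezis1} and hold for \emph{every} maximal monotone operator, whereas the smoothing estimate is special to subdifferentials and is \emph{false} in the general maximal monotone case. Concretely, take $\H=L^2(\R)$ and $Au=u'$ with $\dom(A)=H^1(\R)$: this operator is maximal monotone (it is skew-adjoint), the semigroup is translation, $u(t)=f(\cdot-t)$, and (a), (b), (c) all hold (the speed $\norm{A^0u(t)}=\norm{f'}$ is even constant). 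Yet if $f\in L^2(\R)\setminus H^1(\R)$ then $u(t)\notin\dom(A)$ for all $t>0$, so item~1 fails; and your claimed inequality would give $\norm{f_n'}\le\norm{v'}+\tfrac1t\norm{f_n-v}$ for all $t$, hence $\norm{f_n'}\le\norm{v'}$ for every $v\in H^1$, which is absurd. So no argument built only from those three ingredients can close this step: the subdifferential structure $A=\partial J$ must enter essentially, and in your sketch it never does until item~5.

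What is actually needed (and what Brezis does in \cite[Theorem~3.2]{brezis1973ope}, which is all the paper itself offers as proof, so any complete argument has to follow these lines) are the energy estimates specific to gradient flows. Testing the equation with $u-v$ gives the evolution variational inequality $\tfrac12\tfrac{\d}{\d t}\norm{u(t)-v}^2\le J(v)-J(u(t))$; testing with $\partial_t u$ gives $\tfrac{\d}{\d t}J(u(t))=-\norm{\partial_t u(t)}^2$; combining these with the weight $t$ (estimate $\int_0^t s\norm{\partial_s u(s)}^2\d s$ by integration by parts and use that $\norm{\partial_s u(s)}$ is non-increasing) yields
\begin{align*}
\frac{t^2}{2}\norm{A^0u(t)}^2\le t\left(J(v)-J(u(t))\right)+\frac12\norm{f-v}^2,
\end{align*}
and then the subdifferential inequality $J(v)-J(u(t))\le\norm{A^0v}\,\norm{v-u(t)}$ together with Young's inequality gives exactly $\norm{A^0u(t)}\le\norm{A^0v}+\tfrac1t\norm{f-v}$. (Brezis carries this out for the Yosida/Moreau--Yosida approximation and passes to the limit, which also delivers item~1.) The remainder of your plan --- extension of the contraction semigroup to $\overline{\dom(A)}$ by density, demiclosedness of the maximal monotone graph for the limit passage, restarting the flow for items~3--4, and the chain rule for item~5 --- is sound and standard. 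The irony is that the identity you invoke in item~5 is exactly the gradient-flow ingredient missing from your proof of item~2; it must be used there, in integrated and weighted form, before item~5 can even be stated.
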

\begin{proof}
For the proof see \cite[Theorem~3.2]{brezis1973ope} where it should be noted that right-differentiability of the map $t\mapsto J(u(t))$ follows since it is Lipschitz continuous and non-increasing. 
\end{proof}

Applying Theorem~\ref{thm:brezis2} to the so-called \emph{gradient flow} of the functional $J$
\begin{align}\label{eq:gradient_flow}
\begin{cases}
\partial_t u(t) = - p(t),\\
p\in \partial J(u(t)),\\
u(0)=f,
\end{cases}
\tag{GF}
\end{align}
yields the existence of a unique solution $u:[0,\infty)\to\H$ with associated subgradients $p(t):=-\partial_t^+u(t)\in\partial J(u(t))$ which have minimal norm in $\partial J(u(t))$ for all $t>0$.

\begin{rem}
From now on, we will denote all occurring right-derivatives with the usual derivative symbols $\partial_t$ and $\frac{\d}{\d t}$ to simplify our notation.
\end{rem}

Having the geometric characterization of eigenvectors and the existence theory of gradient flows at hand, we are now interested in the scenario that the gradient flow yields a sequence of subgradients $p(t)$ which are eigenvectors, i.e., $p(t)\in\partial J(p(t))$. Reformulating this using the eigenvector characterization from Proposition~\ref{prop:characterization_eigenvectors} we obtain

\begin{thm}
The gradient flow \eqref{eq:gradient_flow} yields a sequence of eigenvectors $p(t)$ for $t>0$ if and only if $\langle p(t),p(t)-q\rangle\geq0,\quad\forall q\in K,\;\forall t>0.$
\end{thm}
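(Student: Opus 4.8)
The plan is to recognize that this statement is nothing more than a pointwise-in-time application of the geometric characterization of eigenvectors already established in Proposition~\ref{prop:characterization_eigenvectors}. The only preliminary fact that must be checked is that the subgradients generated by the flow actually lie in the dual unit ball $K$, so that Proposition~\ref{prop:characterization_eigenvectors} is applicable to them. This is immediate: applying Theorem~\ref{thm:brezis2} to \eqref{eq:gradient_flow}, the minimal-norm selection $p(t)=-\partial_t^+u(t)$ satisfies $p(t)\in\partial J(u(t))$ for every $t>0$, and by item 5 of Proposition~\ref{prop:properties_one-hom} one has $\partial J(u(t))\subset\partial J(0)=K$. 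Hence $p(t)\in K$ for all $t>0$.

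The next step is simply to unwind the definition of ``yields a sequence of eigenvectors''. By the definition of eigenvector, this means that $p(t)$ is an eigenvector with eigenvalue $1$, i.e. $p(t)\in\partial J(p(t))$, for every $t>0$. Since $p(t)\in K$, Proposition~\ref{prop:characterization_eigenvectors} tells us that, for each fixed $t>0$, the relation $p(t)\in\partial J(p(t))$ holds if and only if $\langle p(t),p(t)-q\rangle\geq0$ for all $q\in K$. Quantifying this equivalence over all $t>0$ on both sides yields precisely the claimed biconditional, so the theorem follows.

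I do not expect any genuine obstacle here, since the substantive geometric content has already been isolated in Proposition~\ref{prop:characterization_eigenvectors}, and the membership $p(t)\in K$ is a direct consequence of Brezis' theory together with the inclusion $\partial J(u(t))\subset K$. The statement is best understood as a convenient reformulation that packages the eigenvector condition in a form amenable to the subsequent study of spectral decompositions, rather than as a result requiring new arguments. The mild care one should take is purely in the quantifier bookkeeping: the equivalence in Proposition~\ref{prop:characterization_eigenvectors} is for a single point, and one must note that applying it separately at each time $t>0$ preserves the ``for all $t$'' quantifier on both sides of the equivalence, which is valid precisely because the condition is local in $t$.
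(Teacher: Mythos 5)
Your proof is correct and takes essentially the same route as the paper: the paper states this theorem as an immediate reformulation of Proposition~\ref{prop:characterization_eigenvectors}, interpreting ``yields eigenvectors'' as $p(t)\in\partial J(p(t))$ and applying the characterization at each fixed $t>0$. Your additional check that $p(t)\in K$ via Theorem~\ref{thm:brezis2} and item 5 of Proposition~\ref{prop:properties_one-hom} is exactly the (implicit) prerequisite the paper relies on.
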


Before giving examples of functionals $J\in\calC$ that guarantee this to happen, we investigate the consequences of such a behavior of the flow. We prove that, in this case, the gradient flow is equivalent to a variational regularization method and an inverse scale space flow. Furthermore, disjoint increments $p(t)-p(s)$ of eigenvectors will turn out to be mutually orthogonal. Finally, we will use the subgradients $p(t)$ of an arbitrary gradient flow to define a measure that acts as generalization of the spectral measure corresponding to a self-adjoint / compact linear operator in the case that $p(t)$ are eigenvectors. 
\subsection{Equivalence of gradient flow, variational method, and inverse scale space flow}\label{sec:equiv}
First we show that if the gradient flow \eqref{eq:gradient_flow} generates eigenvectors, this implies the equivalence with a variational regularization problem \eqref{eq:var_prob}, and the inverse scale space flow \eqref{eq:iss_flow}, given by
\begin{align}
\label{eq:var_prob}\tag{VP}
&\begin{cases}
v(t)=\argmin_{v\in\H} E_t(v)\\
E_t(v)=\frac{1}2 \| v - f\|^2 + t J(v),
\end{cases}\\[10pt]
\label{eq:iss_flow}\tag{ISS}
&\begin{cases}
\partial_\tau r(\tau)=f-w(\tau),\\
r(\tau)\in\partial J(w(\tau)),\\
r(0)=0,\;w(0)=\bar{f}.
\end{cases}
\end{align}
Here $t>0$ denotes a time / regularization parameter, whereas $\tau>0$ will turn out to correspond to the ``inverse time'' $1/t$. Furthermore, the initial condition $w(0)=\bar{f}$ of the inverse scale space flow denotes the orthogonal projection of $f$ onto the null-space of $J$ defined in \eqref{eq:orth_proj} (cf.~\cite{bungert2018solution} for more details). Note that all time derivatives ought to be understood in the weak sense, existent for almost all times, or in the sense of right-derivatives that exist for all times.
\begin{thm}[Equivalence of GF and VP]\label{thm:equivalence_gf_vp}
Let $(u,p)$ be a solution of the gradient flow \eqref{eq:gradient_flow} and assume that for all $t > 0$ it holds $p(t)\in\partial J(p(t))$. Then for $s\leq t$ it holds $p(s) \in \partial J(u(t))$. Moreover, $u(t) = v(t)$ where $v(t)$ solves \eqref{eq:var_prob}.
\end{thm}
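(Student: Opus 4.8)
The plan is to establish the two assertions in order, since the second follows readily once the first is available. For the first claim, fix $s>0$ and consider, for $t\ge s$, the scalar ``duality gap''
$$g(t) := J(u(t)) - \langle p(s), u(t)\rangle.$$
Because $p(s)\in\partial J(u(s))\subset\partial J(0)=K$ (item 5 of Proposition~\ref{prop:properties_one-hom}), the defining inequality of $K$ gives $\langle p(s),u(t)\rangle\le J(u(t))$, so $g\ge 0$; and $p(s)\in\partial J(u(s))$ gives $g(s)=0$. The sought membership $p(s)\in\partial J(u(t))$ is exactly the statement $g(t)=0$, so it suffices to show that $g$ is non-increasing on $[s,\infty)$.

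To this end I would differentiate $g$. By item 5 of Theorem~\ref{thm:brezis2} the energy dissipates as $\frac{\d}{\d t}J(u(t))=-\norm{p(t)}^2$, while the flow equation $\partial_t u=-p$ gives $\frac{\d}{\d t}\langle p(s),u(t)\rangle=-\langle p(s),p(t)\rangle$. Hence
$$\frac{\d}{\d t}g(t) = -\norm{p(t)}^2 + \langle p(s),p(t)\rangle = -\langle p(t),\,p(t)-p(s)\rangle.$$
Now I invoke the hypothesis that $p(t)$ is an eigenvector with eigenvalue $1$: by Proposition~\ref{prop:characterization_eigenvectors} one has $\langle p(t),p(t)-q\rangle\ge 0$ for every $q\in K$, and choosing $q=p(s)\in K$ yields $\frac{\d}{\d t}g(t)\le 0$. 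Since the relevant maps are Lipschitz on $[s,\infty)$ (Theorem~\ref{thm:brezis2}) and hence absolutely continuous, integrating gives $g(t)\le g(s)=0$; together with $g\ge 0$ this forces $g\equiv 0$ on $[s,\infty)$, proving $p(s)\in\partial J(u(t))$ for all $t\ge s$.

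For the second claim I would first integrate the flow equation to get $f-u(t)=\int_0^t p(\sigma)\d\sigma$, so that $\frac1t(f-u(t))$ is the time-average of the subgradients $p(\sigma)$, $\sigma\in(0,t]$. I claim this average lies in $\partial J(u(t))$, and this is seen by averaging the two defining properties of a subgradient: each $p(\sigma)\in K$ gives $\langle p(\sigma),v\rangle\le J(v)$ for all $v\in\H$, while the first part gives $\langle p(\sigma),u(t)\rangle=J(u(t))$ for $\sigma\le t$. Averaging over $\sigma\in(0,t]$ yields $\langle \tfrac1t(f-u(t)),v\rangle\le J(v)$ for all $v$ and $\langle \tfrac1t(f-u(t)),u(t)\rangle=J(u(t))$, which are precisely the conditions for $\frac1t(f-u(t))\in\partial J(u(t))$. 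Equivalently $0\in u(t)-f+t\,\partial J(u(t))$, the Euler--Lagrange inclusion characterizing the unique minimizer of the strictly convex functional $E_t$ in \eqref{eq:var_prob}; by uniqueness, $u(t)=v(t)$.

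The main obstacle is the monotonicity argument for $g$ in the first part: it hinges on pairing the energy-dissipation identity $\frac{\d}{\d t}J(u(t))=-\norm{p(t)}^2$ (valid as a right-derivative for all $t>0$ by Theorem~\ref{thm:brezis2}) with the eigenvector inequality of Proposition~\ref{prop:characterization_eigenvectors} applied at the ``frozen'' subgradient $q=p(s)$. One must be mildly careful that all derivatives are right-derivatives and that the Lipschitz regularity on $[\delta,\infty)$ legitimizes recovering $g$ from its derivative; once this is granted, the remaining steps -- the integral representation of $u(t)$ and the averaging into the closed convex set $\partial J(u(t))$ -- are routine.
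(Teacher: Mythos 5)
Your proof is correct and follows essentially the same route as the paper: your monotone ``duality gap'' $g(t)$ is just an integrated form of the paper's inequality $0 \geq \langle -p(t), p(t)-p(s)\rangle = \frac{\d}{\d t}J(u(t)) - \langle \partial_t u(t), p(s)\rangle$, obtained from the same two ingredients (the eigenvector characterization of Proposition~\ref{prop:characterization_eigenvectors} with $q=p(s)$ and the dissipation identity in Theorem~\ref{thm:brezis2}). Your averaging argument for the variational problem is likewise the paper's Fejer-mean argument, only spelled out in more detail, concluding identically via uniqueness of the minimizer of $E_t$.
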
 
\begin{proof}
From \eqref{ineq:eigenvectorcharacterization} we see that in particular $ \langle p(t), p(t) - p(s) \rangle \geq 0$ holds for all $0<s\leq t$ and hence, using $p(t)=-\partial_t u(t)$ together with 5. in Theorem~\ref{thm:brezis2},
$$  0 \geq \langle -p(t), p(t) - p(s) \rangle = \frac{\d}{\d t} J(u(t)) - \langle \partial_t u(t), p(s) \rangle. $$
Integrating from $s$ to $t$ yields 
$$ J(u(t))- J(u(s)) - \langle u(t),p(s) \rangle + J(u(s)) \leq 0,  $$
which is equivalent to $J(u(t)) \leq \langle u(t),p(s) \rangle$ and hence $p(s) \in \partial J(u(t))$. That $v(t):=u(t)$ solves \eqref{eq:var_prob} follows by observing that the Fejer mean $q(t):=\frac{1}t \int_0^t p(s)~ds$ is the appropriate subgradient for the optimality condition of $v(t)$ being a minimizer of $E_t$, i.e.,
$$v(t)-f+tq(t)=0,\quad q(t)\in\partial J(v(t)).$$
The fact that \eqref{eq:gradient_flow} and \eqref{eq:var_prob} posses unique solutions concludes the proof.
\end{proof} 

Now we prove the equivalence of the variational problems and the inverse scale space flow. To avoid confusion due to the time reparametrization connecting $t$ and $\tau$, we denote the derivatives of $v$ and $q$ with respect to the regularization parameter $t$ in \eqref{eq:var_prob} by $v'$ and $q'$, respectively. For instance, $v'(1/\tau)$ simply means $\left(\partial_t v(t)\right)\vert_{t=1/\tau}$ and this expression exists since by the previous theorem $v=u$ and $u$ is right-differentiable for all $t>0$.
\begin{thm}[Equivalence of VP and ISS]\label{thm:equivalence_vp_iss}
Let the gradient flow \eqref{eq:gradient_flow} generate eigenvectors $p(t)\in\partial J(p(t))$. Let, furthermore, $\{v(t)\st t>0\}$ be the solutions of the variational problem \eqref{eq:var_prob} with subgradients $\{q(t)\st t>0\}$. Then for $\tau:=1/t$ the pair $(w,r)$, given by
\begin{align}
w(\tau)&:=v\left({1}/{\tau}\right)-\frac{1}{\tau}v'\left({1}/{\tau}\right),\\
r(\tau)&:=q\left({1}/{\tau}\right),
\end{align}
is a solution of the inverse scale space flow \eqref{eq:iss_flow}.
\end{thm}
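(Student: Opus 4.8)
The plan is to verify directly that the constructed pair $(w,r)$ satisfies the three defining properties of \eqref{eq:iss_flow}: the flow equation $\partial_\tau r(\tau)=f-w(\tau)$, the subgradient inclusion $r(\tau)\in\partial J(w(\tau))$, and the initial conditions $r(0)=0$, $w(0)=\bar{f}$. The starting point is the optimality condition for \eqref{eq:var_prob}, namely $v(t)-f+tq(t)=0$, which yields the explicit formula $q(t)=\frac1t(f-v(t))$ and hence $r(\tau)=q(1/\tau)=\tau\,(f-v(1/\tau))$. Throughout I will reuse the output of Theorem~\ref{thm:equivalence_gf_vp}: the identification $v=u$ (so $v'=-p$), the Fejer representation $q(t)=\frac1t\int_0^t p(s)\d s$, and the inclusion $p(s)\in\partial J(u(t))$ for $s\le t$.

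The flow equation is the routine part. Differentiating $r(\tau)=\tau\,(f-v(1/\tau))$ with the product and chain rules gives $\partial_\tau r(\tau)=(f-v(1/\tau))+\frac1\tau v'(1/\tau)$, which is precisely $f-w(\tau)$ by the definition of $w$. No structural hypothesis on $J$ enters here.

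The conceptual heart is the inclusion $r(\tau)\in\partial J(w(\tau))$. Writing $t=1/\tau$, we have $r(\tau)=q(t)\in K$ (being a subgradient of $J$, by item~5 of Proposition~\ref{prop:properties_one-hom}), so it suffices to prove $\langle q(t),w(\tau)\rangle=J(w(\tau))$. Using $v=u$ and $v'=-p$, the definition of $w$ reads $w(\tau)=u(t)+t\,p(t)$, whence $\langle q(t),w(\tau)\rangle=\langle q(t),u(t)\rangle+t\langle q(t),p(t)\rangle=J(u(t))+t\langle q(t),p(t)\rangle$, the last equality because $q(t)\in\partial J(u(t))$. The key identity is $\langle q(t),p(t)\rangle=J(p(t))$: since $p(t)$ is the minimal-norm element of $\partial J(u(t))$ and an eigenvector, Proposition~\ref{prop:minsub} gives $\langle p(t),p(t)-q\rangle=0$ for every $q\in\partial J(u(t))$; applying this to $q=p(s)\in\partial J(u(t))$ (legitimate for $s\le t$ by Theorem~\ref{thm:equivalence_gf_vp}) yields $\langle p(t),p(s)\rangle=\norm{p(t)}^2$, and integrating against the Fejer mean gives $\langle q(t),p(t)\rangle=\norm{p(t)}^2=J(p(t))$. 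Therefore $\langle q(t),w(\tau)\rangle=J(u(t))+tJ(p(t))$. Finally, subadditivity and absolute one-homogeneity yield $J(w(\tau))\le J(u(t))+tJ(p(t))=\langle q(t),w(\tau)\rangle$, while $q(t)\in K$ gives the reverse inequality $\langle q(t),w(\tau)\rangle\le J(w(\tau))$; the two sandwich into the desired equality, establishing the inclusion.

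It remains to check the initial conditions as $\tau\to0^+$, i.e. $t\to\infty$. For $r(0)$ one uses $r(\tau)=\frac1t(f-u(t))$ together with the boundedness of $f-u(t)$ (indeed $u(t)\to\bar{f}$), so $r(\tau)\to0$. The delicate point, which I expect to be the main obstacle, is $w(0)=\bar{f}$. Since the null-space component of $u(t)$ is frozen at $\bar{f}$ (because $p(t)\perp\calN(J)$ by item~7 of Proposition~\ref{prop:properties_one-hom}) and $u(t)\to\bar{f}$, proving $w(\tau)=u(t)+t\,p(t)\to\bar{f}$ amounts to showing $t\,p(t)\to0$. The monotonicity and integrability of $\norm{p(t)}^2$ coming from items~4 and~5 of Theorem~\ref{thm:brezis2} only give $t\norm{p(t)}^2\to0$, hence $\norm{p(t)}=\smallO(t^{-1/2})$, which is not strong enough to control $t\,p(t)$. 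Closing this gap will require a sharper decay estimate for $p(t)$ — for instance via finite extinction of the flow or a more careful exploitation of the identity $\langle p(t),p(s)\rangle=\norm{p(t)}^2$ — and this is where the real analytic work lies.
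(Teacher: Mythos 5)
Your two core verifications are correct and follow the paper's own proof almost verbatim: the flow equation is obtained there exactly as you do it, by differentiating the optimality condition $q(t)=[f-v(t)]/t$, and the subgradient inclusion is proved by the same sandwich (subadditivity of $J$ plus $r(\tau)=q(1/\tau)\in K$). The only difference is how the key identity $\langle q(1/\tau),p(1/\tau)\rangle=J(p(1/\tau))$ is reached: the paper applies Proposition~\ref{prop:minsub} directly to $q:=q(1/\tau)\in\partial J(u(1/\tau))$, while you route through the Fejer mean $q(t)=\frac{1}{t}\int_0^t p(s)\d s$ and the relation $\langle p(t),p(s)\rangle=\norm{p(t)}^2$ for $s\le t$; both are legitimate, the paper's being one line shorter.

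The issue you flag at the end is real, but note first that the paper does not resolve it either: its proof verifies the two relations for $\tau>0$ and then appeals to uniqueness of solutions of \eqref{eq:var_prob} and \eqref{eq:iss_flow}, never checking $r(0)=0$ or $w(0)=\bar{f}$; you actually go further by proving $r(\tau)\to0$. More importantly, the gap is closable by exactly the sharper exploitation of orthogonality you anticipate, so no new analytic input is needed. By Theorem~\ref{thm:orthogonality} (see also the proof of Corollary~\ref{cor:hierarchy}) one has $\langle p(s),p(r)\rangle=\norm{p(r)}^2$ for $0<s\le r$, hence
\begin{align*}
\norm{\int_0^T p(s)\d s}^2=\int_0^T\int_0^T\langle p(s),p(r)\rangle\d s\d r=2\int_0^T r\norm{p(r)}^2\d r,
\end{align*}
and letting $T\to\infty$ with Corollary~\ref{cor:indef_int} gives $\int_0^\infty r\norm{p(r)}^2\d r=\frac{1}{2}\norm{f-\bar{f}}^2<\infty$. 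Since $r\mapsto\norm{p(r)}$ is non-increasing by Theorem~\ref{thm:brezis2},
\begin{align*}
t^2\norm{p(t)}^2\leq\frac{8}{3}\int_{t/2}^{t}r\norm{p(r)}^2\d r\longrightarrow 0,\quad t\to\infty,
\end{align*}
because the right-hand side is the tail of a convergent integral. Hence $t\,p(t)\to0$ strongly, and $w(\tau)=u(1/\tau)+\frac{1}{\tau}p(1/\tau)\to\bar{f}$ as $\tau\searrow0$, which is the missing initial condition. Two remarks on your suggested alternatives: the decay $t\norm{p(t)}^2\to0$ is indeed all one gets for general flows, so the eigenvector hypothesis enters essentially at this point; and finite extinction would require a Poincar\'{e}-type inequality that is not assumed here (the $L^1$-flow of Section~\ref{sec:l1flow} generates eigenvectors yet need not become extinct).
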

\begin{proof}
From the optimality conditions of \eqref{eq:var_prob} for $t>0$ we deduce $q(t)=[f-v(t)]/t.$ By the quotient rule we obtain $q'(t)=[v(t)-f-tv'(t)]/t^2.$ Inserting $t=1/\tau$ yields $q'(1/\tau)=\tau^2\left[v(1/\tau)-f-\frac{1}{\tau}v'(1/\tau)\right]$. Using this we find with the chain rule
$$\partial_\tau^+ r(\tau)=-\frac{1}{\tau^2}q'(1/\tau)=f-v(1/\tau)+\frac{1}{\tau}v'(1/\tau)=f-w(\tau),$$
hence, $(w,r)$ fulfills the inverse scale space equality. It remains to check that $r(\tau)\in\partial J(w(\tau))$. We use the fact that according to Theorem~\ref{thm:equivalence_gf_vp} the solutions of the gradient flow and the variational problem coincide, i.e, $v=u$ and $v'=u'=-p$, to obtain
$$J(w(\tau))=J\left(v(1/\tau)-\frac{1}{\tau}v'(1/\tau)\right)=J\left(v(1/\tau)+\frac{1}{\tau}p(1/\tau)\right)\leq J(v(1/\tau))+\frac{1}{\tau}J(p(1/\tau)).$$
Using that the subgradients $p(1/\tau)$ are eigenvectors with minimal norm in $\partial J(u(1/\tau))$ and invoking Proposition~\ref{prop:minsub} we infer $J(p(1/\tau))=\norm{p(1/\tau)}^2=\langle q(1/\tau),p(1/\tau)\rangle.$ By inserting this and using that $q(1/\tau)\in\partial J(v(1/\tau))$ we obtain
\begin{align*}
J(w(\tau))&\leq\langle q(1/\tau),v(1/\tau)\rangle+\frac{1}{\tau}\langle q(1/\tau),p(1/\tau)\rangle=\langle r(\tau),w(\tau)\rangle.
\end{align*} 
The fact that $r(\tau)=q(1/\tau)\in K$ finally shows that $r(\tau)\in\partial J(w(\tau))$. Once again, the uniqueness solutions of \eqref{eq:var_prob} and \eqref{eq:iss_flow} concludes the proof.
\end{proof}

\subsection{Orthogonality of the decomposition}\label{sec:orth}
Simple examples of flows with subgradients which are piecewise constant in time show that it is false that two subgradients of a gradient flow corresponding to different time points are orthogonal. However, we are able to show that the differences of subgradients are orthogonal if the subgradients themselves are eigenvectors.  
\begin{thm}\label{thm:orthogonality}
If the gradient flow \eqref{eq:gradient_flow} generates eigenvectors, it holds for $0<r\leq s\leq t$ that 
\begin{align}\label{eq:orthogonality}
\langle p(t),p(s)-p(r)\rangle=0.
\end{align}
\end{thm}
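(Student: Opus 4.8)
The plan is to reduce the claim to two results already proved in this section, namely Proposition~\ref{prop:minsub} and Theorem~\ref{thm:equivalence_gf_vp}. The key observation is that the eigenvector characterization \eqref{ineq:eigenvectorcharacterization} only yields the \emph{inequalities} $\langle p(t),p(t)-p(s)\rangle\geq 0$ and $\langle p(t),p(t)-p(r)\rangle\geq 0$ (since $p(s),p(r)\in K$), which on their own are too weak. To upgrade these to equalities I would invoke the minimal-norm structure of the flow: by Brezis' theory (Theorem~\ref{thm:brezis2}) the subgradient $p(t)=-\partial_t u(t)$ is precisely the element of minimal norm in $\partial J(u(t))$, and by hypothesis it is an eigenvector with eigenvalue~$1$.

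First I would apply Proposition~\ref{prop:minsub} with $u=u(t)$ and the minimal-norm subgradient equal to $p(t)$. Its conclusion \eqref{eq:minsub_general} gives the sharp identity
\begin{align}
\langle p(t), p(t)-q\rangle = 0 \qquad \forall\, q\in\partial J(u(t)).
\end{align}
Next I would use Theorem~\ref{thm:equivalence_gf_vp}, which asserts that for $s\leq t$ one has $p(s)\in\partial J(u(t))$; since $r\leq s\leq t$, both $p(s)$ and $p(r)$ are admissible choices of $q$ in the identity above. Substituting $q=p(s)$ and $q=p(r)$ yields $\langle p(t),p(t)-p(s)\rangle=0$ and $\langle p(t),p(t)-p(r)\rangle=0$.

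Finally I would subtract these two equalities. The terms $\langle p(t),p(t)\rangle$ cancel, leaving
\begin{align}
\langle p(t), p(s)-p(r)\rangle = 0,
\end{align}
which is exactly \eqref{eq:orthogonality}. I expect the main conceptual obstacle to be recognizing that neither ingredient suffices alone: the geometric characterization \eqref{ineq:eigenvectorcharacterization} only controls one sign, and one genuinely needs the membership $p(s),p(r)\in\partial J(u(t))$ (a nontrivial monotonicity-in-time fact proved via the integration argument of Theorem~\ref{thm:equivalence_gf_vp}) combined with the \eqref{eq:minsub} equality of Proposition~\ref{prop:minsub} to pin down both signs simultaneously. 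Once these are in place the computation is immediate, and no additional regularity on the derivatives $p(\cdot)$ is required beyond the right-differentiability already guaranteed by Theorem~\ref{thm:brezis2}.
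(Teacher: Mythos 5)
Your proposal is correct and follows essentially the same route as the paper's own proof: membership $p(r),p(s)\in\partial J(u(t))$ from Theorem~\ref{thm:equivalence_gf_vp}, minimality of $\norm{p(t)}$ from Brezis' theory, and then Proposition~\ref{prop:minsub} applied with $u=u(t)$ and $q\in\{p(s),p(r)\}$ to get $\langle p(t),p(s)\rangle=\norm{p(t)}^2=\langle p(t),p(r)\rangle$, from which the claim follows by subtraction.
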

\begin{proof}
As stated in Theorem~\ref{thm:equivalence_gf_vp}, it holds $p(r),p(s),p(t)\in\partial J(u(t))$ with $\norm{p(t)}$ minimal in $\partial J(u(t))$. Hence, the assertion follows directly by employing Proposition~\ref{prop:minsub} with $u:=u(t)$, $\hat{p}:=p(t)$, and $p\in\{p(s),p(r)\}$ to obtain $\langle p(t),p(s)\rangle =\norm{p(t)}^2=\langle p(t),p(r)\rangle.$
\end{proof}

\change{
We already know from Theorem~\ref{thm:equivalence_gf_vp} that if the gradient flow generates eigenvectors it holds $p(s)\in\partial J(u(t))$ for all $s\leq t$. Using Theorem~\ref{thm:orthogonality} above one can even show 
\begin{cor}\label{cor:hierarchy}
If the gradient flow \eqref{eq:gradient_flow} generates eigenvectors, it holds for all $0<s\leq t$ that  
\begin{align}\label{eq:hierarchy_sg}
p(s)\in\partial J(p(t)).
\end{align}
\end{cor}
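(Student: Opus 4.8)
The plan is to verify directly the two conditions that define membership in $\partial J(p(t))$: the global inequality $\langle p(s),v\rangle\leq J(v)$ for all $v\in\H$, and the equality $\langle p(s),p(t)\rangle=J(p(t))$. Since the hypothesis ``the flow generates eigenvectors'' means $p(t)\in\partial J(p(t))$ with eigenvalue $1$ for all $t>0$, everything should follow by assembling results already proven, with no genuinely new estimate required.

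First I would record the inequality part. By Theorem~\ref{thm:equivalence_gf_vp}, the eigenvector hypothesis yields $p(s)\in\partial J(u(t))$ for every $0<s\leq t$. By item~5 of Proposition~\ref{prop:properties_one-hom} we have $\partial J(u(t))\subset\partial J(0)=K$, so in particular $p(s)\in\partial J(0)$. By the definition \eqref{eq:subdiff_0} of the subdifferential at zero this is precisely the statement that $\langle p(s),v\rangle\leq J(v)$ for all $v\in\H$, which is the first of the two defining conditions for $p(s)\in\partial J(p(t))$.

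It remains to establish the equality $\langle p(s),p(t)\rangle=J(p(t))$. Here I would use that the gradient flow selects $p(t)=-\partial_t^+u(t)$ as the element of minimal norm in $\partial J(u(t))$, and that by assumption this minimal-norm subgradient is an eigenvector with eigenvalue $1$. Applying Proposition~\ref{prop:minsub} at $u:=u(t)$ with the norm-minimal eigenvector $p(t)$, and taking the admissible element $q:=p(s)\in\partial J(u(t))$, relation \eqref{eq:minsub_general} gives $\langle p(t),p(t)-p(s)\rangle=0$, i.e. $\langle p(s),p(t)\rangle=\langle p(t),p(t)\rangle=\norm{p(t)}^2$, where I use symmetry of the inner product. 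Since $p(t)$ is an eigenvector with eigenvalue $1$ we have $J(p(t))=\langle p(t),p(t)\rangle=\norm{p(t)}^2$, whence $\langle p(s),p(t)\rangle=J(p(t))$.

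Combining the inequality $p(s)\in\partial J(0)$ with the equality $\langle p(s),p(t)\rangle=J(p(t))$ yields exactly $p(s)\in\partial J(p(t))$, proving \eqref{eq:hierarchy_sg}. I do not expect any real obstacle: the corollary is essentially a restatement of Theorem~\ref{thm:orthogonality} (equivalently, of Proposition~\ref{prop:minsub} evaluated at $u(t)$), and the only points worth stating carefully are the symmetry of $\langle\cdot,\cdot\rangle$ used to pass from $\langle p(t),p(s)\rangle$ to $\langle p(s),p(t)\rangle$, and the observation that the global inequality in the definition of $\partial J(p(t))$ comes for free once $p(s)\in K$.
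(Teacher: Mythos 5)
Your proof is correct and follows essentially the same route as the paper: the paper cites Theorem~\ref{thm:orthogonality} to get $\norm{p(t)}^2=\langle p(s),p(t)\rangle$, while you inline exactly the argument behind that theorem (Theorem~\ref{thm:equivalence_gf_vp} to place $p(s)\in\partial J(u(t))$, then Proposition~\ref{prop:minsub} at $u(t)$ with the norm-minimal eigenvector $p(t)$), and both proofs finish with the identical chain $J(p(t))=\norm{p(t)}^2=\langle p(s),p(t)\rangle\leq J(p(t))$. Your explicit verification of the global inequality via $p(s)\in\partial J(0)$ is a point the paper leaves implicit, but it is the same argument.
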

\begin{proof}
Equation \eqref{eq:orthogonality} implies $\norm{p(t)}^2=\langle p(s),p(t)\rangle$ for all $0\leq s\leq t$. This yields 
$$J(p(t))=\norm{p(t)}^2=\langle p(s),p(t)\rangle\leq J(p(t))$$
and hence $p(s)\in\partial J(p(t))$.
\end{proof}
At this point the most important consequence of Theorem~\ref{thm:orthogonality} is the following corollary, which states that differences of eigenvectors generated by the gradient flow are orthogonal.
}
\begin{cor}
If one defines the \emph{spectral increments}
$$\phi(s,t)=p(t)-p(s),\quad s,t>0,$$
then Theorem~\ref{thm:orthogonality} implies 
$$\left\langle\phi(s_1,t_1),\phi(s_2,t_2)\right\rangle=0$$
for all $0<s_1\leq t_1\leq s_2\leq t_2$.
\end{cor}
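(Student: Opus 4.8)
The plan is to read Theorem~\ref{thm:orthogonality} as the assertion that the \emph{later} subgradient $p(t)$ is orthogonal to every increment $p(s)-p(r)$ assembled from earlier times $r\le s\le t$. The key observation is that, under the hypothesis $0<s_1\le t_1\le s_2\le t_2$, both endpoints $p(t_2)$ and $p(s_2)$ of the second increment occur at times no earlier than $t_1$. Hence each of them is individually orthogonal to the first increment $\phi(s_1,t_1)=p(t_1)-p(s_1)$, and subtracting yields the claim without ever expanding the full four-term product.

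Concretely, I would first apply Theorem~\ref{thm:orthogonality} with the triple $(r,s,t)=(s_1,t_1,t_2)$, which is admissible since $0<s_1\le t_1\le t_2$, to get
$$\langle p(t_2),\,p(t_1)-p(s_1)\rangle=\langle p(t_2),\,\phi(s_1,t_1)\rangle=0.$$
Then I would apply the same theorem with $(r,s,t)=(s_1,t_1,s_2)$, admissible precisely because the hypothesis gives $t_1\le s_2$ so that $0<s_1\le t_1\le s_2$, obtaining $\langle p(s_2),\,\phi(s_1,t_1)\rangle=0$. Expanding by bilinearity then gives
$$\langle\phi(s_2,t_2),\,\phi(s_1,t_1)\rangle=\langle p(t_2),\,\phi(s_1,t_1)\rangle-\langle p(s_2),\,\phi(s_1,t_1)\rangle=0,$$
and symmetry of the inner product delivers the statement in the orientation written in the corollary.

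There is essentially no obstacle here; the single point requiring care is checking that the chain $0<s_1\le t_1\le s_2\le t_2$ places \emph{both} $s_2$ and $t_2$ at or after $t_1$, which is exactly what makes Theorem~\ref{thm:orthogonality} applicable to each endpoint of the second increment. As an equally routine alternative one could instead extract from the proof of Theorem~\ref{thm:orthogonality} the relation $\langle p(s),p(t)\rangle=\norm{p(t)}^2$ valid for all $0<s\le t$, substitute it into the fully expanded inner product, and watch the four terms collapse as $\norm{p(t_2)}^2-\norm{p(s_2)}^2-\norm{p(t_2)}^2+\norm{p(s_2)}^2=0$.
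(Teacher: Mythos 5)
Your proof is correct and is precisely the argument the paper intends: the corollary is stated without proof as an immediate consequence of Theorem~\ref{thm:orthogonality}, and your double application of that theorem with the triples $(s_1,t_1,t_2)$ and $(s_1,t_1,s_2)$, followed by bilinearity, is the natural way to fill it in. Your alternative via the relation $\langle p(s),p(t)\rangle=\norm{p(t)}^2$ for $s\leq t$ is equally valid and amounts to the same underlying fact.
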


Ideally, one would like to obtain this orthogonality relation for the time derivative of $p$ which, however, only exists in a distributional sense. Formally, one defines
$$\phi(t)=-t\partial_tp(t)$$
to obtain a orthogonal spectral representation of the data $f$ in the sense of \cite{burger2016spectral}, i.e.,
$$f-\bar{f}=\int_0^\infty\phi(t)\d t,\quad\langle\phi(s),\phi(t)\rangle=0,\quad s\neq t.$$
Here, $\phi$ formally act as spectral measure. Since, however, this approach fails due to the lacking regularity of $t\mapsto p(t)$, we will present a rigorous definition of a spectral measure in the next section.

\begin{rem}\label{imprem:finite_dim}
It remains to be mentioned that all results in Sections~\ref{sec:equiv} and \ref{sec:orth} have already been proved in a finite dimensional setting \cite{burger2016spectral} and assuming that $K\subset\R^n$ is a polyhedron that meets \eqref{eq:minsub} or the stronger condition that $J(u)=\norm{Au}_1$ with a matrix $A$ such that $AA^T$ is diagonally dominant. However, these results are not stronger than ours because using the results in Section~\ref{sec:geo_char} it is trivial to show that $K$ satisfying \eqref{eq:minsub} implies that the gradient flow yields eigenvectors (see also Theorem~\ref{thm:poly-flow}). Hence, our result are a proper generalization to infinite dimensions and, furthermore, do not require a special structure of the functional $J$ but only that the gradient flow produces eigenvectors.
\end{rem}

\subsection{Large time behavior and the spectral measure}
In this section we aim at defining a measure that corresponds to the spectral measure of linear operator theory in the case that the gradient flow \eqref{eq:gradient_flow} generates eigenvectors. However, all statements in this section are true without this assumption. \change{As already mentioned in the introduction, the gradient flow \eqref{eq:gradient_flow} gives formally rise to a decomposition of an arbitrary datum into subgradients which takes the form
\begin{align}\label{eq:decomposition}
f-\bar{f}=\int_0^\infty p(t)\d t,
\end{align}
where $\bar{f}$ is the null-space component of $f\in\dom(J)$ given by~\eqref{eq:orth_proj}. This decomposition is the basis for constructing the spectral measure, however, up to now it is formal due to the improper integral.}

In order to make \eqref{eq:decomposition} rigorous, we have to investigate the large time behavior of the gradient flow first. \change{First we will show that the gradient flow has a unique strong limit as time tends to infinity which is given by $\bar{f}$. This will allow us to compute
$$\int_0	^\infty p(t)\d t=-\int_0^\infty\partial_t u(t)\d t=u(0)-\lim_{t\to\infty}u(t)=f-\bar{f},$$
which gives the decomposition. From there on we will construct the spectral measure.}

\begin{thm}[Large time behavior]\label{thm:decrease}
Let $u$ solve \eqref{eq:gradient_flow}, with $f\in\dom(J)$. Then $u(t)$ strongly converges to $\bar{f}$ as $t\to\infty$. 
\end{thm}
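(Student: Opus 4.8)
The plan is to reduce the statement to the case $\bar f=0$ and then split it into an elementary part (weak convergence together with monotone decay of the distance to $\calN(J)$) and a genuinely non-trivial part (upgrading weak convergence to strong), the latter being where the \emph{absolute} one-homogeneity of $J$ — equivalently its evenness — must be spent.

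First I would freeze the null-space component of the flow. Since every subgradient satisfies $p(t)=-\partial_t u(t)\perp\calN(J)$ by item~7 of Proposition~\ref{prop:properties_one-hom}, the orthogonal projection of $u(t)$ onto $\calN(J)$ is constant in $t$ and hence equals $\bar f$; equivalently $u(t)-\bar f\in\calN(J)^\perp$ for all $t$. Replacing $f$ by $f-\bar f$ is legitimate, because $\partial J(u+w)=\partial J(u)$ for $w\in\calN(J)$ (combine items~3 and~7 of Proposition~\ref{prop:properties_one-hom}), so $u(t)-\bar f$ solves the flow with datum $f-\bar f$ and null-space component $0$. I may therefore assume $\bar f=0$ and $f\in\calN(J)^\perp$, and the claim becomes $u(t)\to0$ strongly.

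Second, I would establish the soft facts. For any $z\in\calN(J)$ one computes $\tfrac12\tfrac{\d}{\d t}\|u(t)-z\|^2=\langle -p(t),u(t)-z\rangle=-\langle p(t),u(t)\rangle=-J(u(t))\le0$, using $p(t)\perp z$ and $\langle p(t),u(t)\rangle=J(u(t))$. Thus $t\mapsto\|u(t)\|$ is non-increasing and converges to some $\ell\ge0$, and integrating gives $\|u(t)\|^2=\|f\|^2-2\int_0^t J(u(s))\,\d s$. Boundedness of the left-hand side forces $\int_0^\infty J(u(s))\,\d s<\infty$, and since $s\mapsto J(u(s))$ is non-increasing (item~5 of Theorem~\ref{thm:brezis2}) this yields $J(u(t))\to0$. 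From boundedness of $u(t)$ I extract weak subsequential limits $w$; weak lower semicontinuity of $J$ gives $J(w)\le\liminf J(u(t))=0$, so $w\in\calN(J)$, while $w\in\calN(J)^\perp$ since this space is weakly closed. Hence $w=0$, and as every weak cluster point is $0$ I conclude $u(t)\rightharpoonup0$.

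The remaining and genuinely hard step is to show $\ell=0$, i.e.\ to upgrade weak convergence to strong. This cannot follow from the soft facts alone: weak convergence together with a monotone norm is consistent with any $\ell\ge0$, and without a Poincar\'e-type bound $J(u)\gtrsim\|u\|$ on $\calN(J)^\perp$ — exactly the situation excluded by examples such as the $L^1$-flow — smallness of $J(u(t))$ does not control $\|u(t)\|$. The essential extra ingredient is the symmetry: $J(-u)=J(u)$, so $J$ is even and therefore $\partial J$ is an \emph{odd} maximal monotone operator, $\partial J(-u)=-\partial J(u)$. A classical result on flows of odd maximal monotone operators with nonempty zero set (due to Bruck) then gives strong convergence of $u(t)$ to some $u_\infty\in(\partial J)^{-1}(0)=\calN(J)$, and the null-space invariance from the first step forces $u_\infty-\bar f\in\calN(J)\cap\calN(J)^\perp=\{0\}$, i.e.\ $u_\infty=\bar f$. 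If a self-contained argument is preferred to quoting Bruck, one reproduces the odd-operator Cauchy estimate for $\|u(t)-u(s)\|$, which is precisely where the evenness of $J$ enters; I expect this weak-to-strong upgrade to be the main obstacle of the proof.
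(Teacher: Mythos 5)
Your plan has the same architecture as the paper's proof: quote Bruck's 1975 strong-convergence theorem for the hard step, and identify the limit as $\bar f$ via the orthogonality $p(t)\perp\calN(J)$; the paper does exactly this, without your intermediate ``soft facts'', which are correct but redundant once Bruck is invoked.

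However, the justification you give for the crucial weak-to-strong step is wrong as stated, and this is a genuine (if easily repaired) gap. There is no classical result asserting that flows of \emph{odd} maximal monotone operators with nonempty zero set converge strongly --- or even weakly: the rotation $Au=(-u_2,u_1)$ in $\R^2$ is odd, everywhere defined, continuous and monotone (hence maximal monotone), and has $A^{-1}(0)=\{0\}\neq\emptyset$, yet every nontrivial trajectory of $\partial_t u=-Au$ runs around a circle forever. Oddness of the operator is not the ingredient that can be ``spent'' here; what is needed is that $\partial J$ is the subdifferential of an \emph{even} convex functional, i.e.\ the gradient-flow structure itself. That is precisely the hypothesis of Bruck's Theorem~5 (strong convergence of gradient flows of even, proper, lower semicontinuous convex functionals attaining their minimum), which is what the paper cites, and its proof uses the energy dissipation of the flow, not merely the oddness of the generated semigroup. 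For the same reason your proposed self-contained fallback, the ``odd-operator Cauchy estimate'', does not exist: oddness of the semigroup only yields that $s\mapsto\norm{u(s+h)+u(s)}$ is non-increasing, whence by the parallelogram law $\lim_{s\to\infty}\norm{u(s+h)-u(s)}^2\leq 4\bigl(\ell^2-\norm{u_\infty}^2\bigr)$, where $\ell=\lim_{t\to\infty}\norm{u(t)}$ and $u_\infty$ is the weak limit; this closes only if one already knows $\norm{u_\infty}=\ell$, which is equivalent to the strong convergence one is trying to prove. Replace your citation by Bruck's theorem for even functionals and the rest of your argument (the reduction to $\bar f=0$, the identification of the limit) is correct and coincides with the paper's.
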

\begin{proof}
The proof for strong convergence of $u(t)$ to some $u_\infty\in\calN(J)$ as $t\to\infty$ is given in \cite[Theorem~5]{bruck1975asymptotic} for even and hence, in particular, for absolutely one-homogeneous $J$. To see that $u_\infty=\bar{f}$ we observe
$$\langle u(t)-f,v\rangle=\int_0^t\langle\partial_t u(s),v\rangle\d s=-\int_0^t\langle p(s),v\rangle\d s=0,\quad\forall v\in\calN(J),\,t>0,$$
by using 7. in Proposition~\ref{prop:properties_one-hom}. Hence $u(t)-f\in\calN(J)^\perp$ and by using the strong closedness of the orthogonal complement in Hilbert spaces we infer $u_\infty-f\in\calN(J)^\perp$ and $u_\infty\in\calN(J)$. This is equivalent to $u_\infty=\bar{f}$.
\end{proof}

\begin{cor}\label{cor:indef_int}
Let $u$ solve \eqref{eq:gradient_flow}, with $f\in\dom(J)$. Then it holds
\begin{align}
\lim_{T\to\infty}\norm{\int_0^T p(t)\d t-(f-\bar{f})}=0,
\end{align}
\change{which implies \eqref{eq:decomposition}.}
\end{cor}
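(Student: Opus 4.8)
The plan is to reduce the statement directly to the strong convergence established in Theorem~\ref{thm:decrease}. The crucial observation is that, by the gradient flow equation \eqref{eq:gradient_flow}, the integrand satisfies $p(t)=-\partial_t u(t)$, so the improper integral telescopes into a difference of endpoint values of $u$.

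First I would exploit the regularity furnished by Theorem~\ref{thm:brezis2}: since $u$ is Lipschitz continuous on every interval $[\delta,\infty)$ with $\delta>0$, the fundamental theorem of calculus applies and gives
$$\int_\delta^T p(t)\d t=-\int_\delta^T\partial_t u(t)\d t=u(\delta)-u(T)$$
for all $0<\delta\leq T$. Letting $\delta\to0^+$ and using that $u$ is continuous on $[0,\infty)$ with $u(0)=f$, I obtain $\int_0^T p(t)\d t=f-u(T)$. This is the sense in which the integral is to be read as an improper integral, and it is well defined even when $f\notin\dom(\partial J)$ and $\norm{p(t)}$ blows up as $t\to0$.

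Substituting this identity then yields
$$\norm{\int_0^T p(t)\d t-(f-\bar{f})}=\norm{\bigl(f-u(T)\bigr)-(f-\bar{f})}=\norm{u(T)-\bar{f}},$$
and the right-hand side tends to zero as $T\to\infty$ by Theorem~\ref{thm:decrease}. This proves the corollary; moreover, since the partial integrals converge in $\H$ to $f-\bar{f}$, the improper integral $\int_0^\infty p(t)\d t$ is well defined and equals $f-\bar{f}$, which is precisely the decomposition \eqref{eq:decomposition}.

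The only delicate point is the behaviour near $t=0$: in general $f$ need not lie in $\dom(\partial J)$, so $p(t)$ may fail to be bounded, and thus Bochner integrability of $p$ up to $0$ cannot be assumed. I expect this to be the one issue that needs care, but it is resolved entirely by the telescoping identity on $[\delta,T]$ together with the continuity of $u$ at $0$, so that no integrability of $p$ near the origin is actually required—only convergence of the partial integrals as $\delta\to0^+$.
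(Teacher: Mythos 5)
Your proof is correct and follows essentially the same route as the paper, which likewise obtains the identity $\int_0^T p(t)\d t = u(0)-u(T)$ from the gradient flow equation and then invokes the strong convergence $u(T)\to\bar{f}$ of Theorem~\ref{thm:decrease}. Your additional care in integrating on $[\delta,T]$ and letting $\delta\to0^+$ via continuity of $u$ at $0$ is a welcome refinement that the paper leaves implicit, but it does not change the substance of the argument.
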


\change{Finally, the following Lemma implies that one can without loss of generality assume $\bar{f}=0$.}

\begin{lemma}\label{lem:equivalence_init_cond}
The solution of $\partial_t u=-p,\,p\in\partial J(u)$ with $u(0)=f$ is given by $u=v+\bar{f}$ where $v$ solves $\partial_t v=-p,\,p\in\partial J(v)$ with $v(0)=f-\bar{f}$.
\end{lemma}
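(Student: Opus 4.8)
The plan is to exploit the translation invariance of $J$ and its subdifferential with respect to the null-space $\calN(J)$, and then to conclude by the uniqueness of gradient-flow solutions guaranteed by Theorem~\ref{thm:brezis2}. The whole statement amounts to saying that the null-space component $\bar f$ is frozen by the flow, so it can simply be added back after solving the flow for the orthogonal part.

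First I would establish the key algebraic fact that for any $w\in\calN(J)$ one has $\partial J(z+w)=\partial J(z)$ for all $z\in\H$. This follows directly from Proposition~\ref{prop:properties_one-hom}: by item~3 we have $J(z+w)=J(z)$, and by item~7 every $p\in\partial J(z)$ satisfies $\langle p,w\rangle=0$. Hence $\langle p,z+w\rangle=\langle p,z\rangle=J(z)=J(z+w)$ while $\langle p,\cdot\rangle\leq J(\cdot)$ still holds, so $p\in\partial J(z+w)$; the reverse inclusion follows by symmetry upon replacing $w$ by $-w$.

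Next, let $v$ be the solution of $\partial_t v=-p$, $p\in\partial J(v)$, with $v(0)=f-\bar f$, which exists and is unique by Theorem~\ref{thm:brezis2}. Define $u:=v+\bar f$. Then $u(0)=f-\bar f+\bar f=f$, and since $\bar f$ is constant in time we have $\partial_t u=\partial_t v=-p$. Because $\bar f\in\calN(J)$, the invariance established above yields $\partial J(u)=\partial J(v+\bar f)=\partial J(v)\ni p$. Thus the pair $(u,p)$ solves the gradient flow with initial datum $f$, and by the uniqueness part of Theorem~\ref{thm:brezis2} it coincides with the solution started at $f$, which is precisely the assertion. (The minimal-norm selection underlying the right-derivative is unaffected, since $\partial J(u)=\partial J(v)$ as sets, so $A^0u=A^0v$.)

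The computation is entirely routine; the only point requiring care is verifying the subdifferential invariance, and even that reduces immediately to the orthogonality relation $\partial J(z)\perp\calN(J)$ from item~7 of Proposition~\ref{prop:properties_one-hom}. I do not anticipate any genuine obstacle.
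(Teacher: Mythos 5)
Your proof is correct and follows exactly the route the paper intends: the paper's proof consists of the single remark that the claim is ``trivial by using 3.\ and 7.\ in Proposition~\ref{prop:properties_one-hom}'', which is precisely the subdifferential invariance $\partial J(z+w)=\partial J(z)$ for $w\in\calN(J)$ that you spell out, combined with uniqueness of the gradient flow solution. You have simply filled in the details that the paper leaves implicit.
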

\begin{proof}
The proof is trivial by using 3. and 7. in Proposition~\ref{prop:properties_one-hom}.
\end{proof}

Now we consider $(u,p)$ solving $\partial_t u=-p$ with $u(0)=f$ and, without loss of generality, we can assume $\bar{f}=0$. This can always be achieved by replacing $f$ with $f-\bar{f}$ and using the previous Lemma. Note that $\bar{f}=0$ if and only if $f\in\calN(J)^\perp$. Then by Corollary~\ref{cor:indef_int} we infer that
\begin{align}\label{eq:reconstruction}
f=\int_0^\infty p(t)\d t.
\end{align}
Let us compare this to the statement of the spectral theorem for a self-adjoint linear operator $T:\H\to\H$. For these one has
$$\Phi(T)f=\int_{\sigma(T)}\Phi(\lambda)\d E_\lambda f,$$
where $\sigma(T)$ denotes the spectrum of $T$, $E$ is the spectral measure, $\Phi:\R\to\R$ is a function, and $f\in\H$. By choosing $\Phi=\id$ one obtains the spectral decomposition of the operator $T$ itself and by choosing $\Phi=1$ one obtains the decomposition of the identity instead:
\begin{align}\label{eq:res_of_id_linear}
f=\int_{\sigma(T)}\d E_\lambda f.
\end{align}
If $T$ is even compact, the spectral measure becomes atomic and is given by 
\begin{align}\label{eq:atomic_spectral_measure}
E_\lambda=\sum_{k=1}^\infty\delta_{\lambda_k}(\lambda)\langle\cdot,e_k\rangle e_k
\end{align}
where $(e_k)_{k\in\N}$ denotes a set of orthonormal eigenvectors of $T$ with a null-sequence of eigenvalues $(\lambda_k)_{k\in\N}$. Plugging this in, one can express any $f\in\H$ by a linear combination of eigenvectors:
\begin{align}\label{eq:sum_of_ev_linear}
f=\sum_{k=1}^\infty\langle f,e_k\rangle e_k.
\end{align}
Consequently, our aim is to manipulate the measure $p(t)\d t$ in \eqref{eq:reconstruction} in such a way that it becomes a non-linear generalization of \eqref{eq:res_of_id_linear}-\eqref{eq:sum_of_ev_linear} for the case of the maximal monotone operator $\partial J$. In particular, it should become atomic if $t\mapsto p(t)$ is a sequence of countably / finitely many distinct eigenvectors with eigenvalue 1. To achieve this we condense all $t>0$ with the same value of $\norm{p(t)}$ into one atomic point $\lambda(t)=\norm{p(t)}$ which can be considered as the corresponding eigenvalue of $e(t):=p(t)/\norm{p(t)}$. Since the function $t\mapsto\lambda(t)$ is non-increasing and converges to zero according to Theorem~\ref{thm:brezis2} and \cite[Theorem~7]{brezis1974monotone}, this yields a perfect analogy to the linear situation where only orthogonality has to be replaced by orthogonality of differences of eigenvectors. 

\begin{definition}[Spectral measure]
Let $(u,p)$ solve $\partial_tu=-p,\;p\in\partial J(u)$ with $u(0)=f\in\calN(J)^\perp\cap\dom(J)$ and and let $\tilde{\mu}$ be the measure
\begin{align}
\tilde{\mu}(A)=\int_A p(t)\d t,
\end{align}
for Borel sets $A\subset(0,\infty)$. If we set $\lambda:(0,\infty)\to[0,\infty),\;t\mapsto\lambda(t):=\norm{p(t)}$, then the \emph{spectral measure} $\mu$ of $f$ with respect to $J$ is defined as the pushforward of $\tilde{\mu}$ through $\lambda$, i.e., 
\begin{align}
\mu(B):=\tilde{\mu}(\lambda^{-1}B),
\end{align} 
for Borel sets $B\subset[0,\infty)$. The \emph{spectrum} of $\mu$ is given by
\begin{align}
\sigma(\mu):=\{\lambda(t)\st t>0\}.
\end{align}
\end{definition}

\begin{rem}
Note that $t\mapsto \lambda(t)$ is indeed a measurable map since it is non-increasing according to 4. in Theorem~\ref{thm:brezis2} which makes $\mu$ well-defined. 
\end{rem}

By definition of $\mu$ it holds
$$\int_{\sigma(\mu)}\d\mu=\int_{(0,\infty)}\d\tilde{\mu}=f,$$
i.e., $\mu$ has a reconstruction property like \eqref{eq:res_of_id_linear}. Furthermore, if $t\mapsto p(t)$ is a collection of countably many distinct eigenvectors, the map $t\mapsto\lambda(t)$ only has a countable range. Consequently, the measure $\mu$ -- which is supported on the range of $\lambda$ -- becomes concentrated in countably many points and, hence, atomic. This is the analogy to the linear case \eqref{eq:atomic_spectral_measure}. 
\subsection{A \change{necessary and} sufficient condition for spectral decompositions}
Before moving to examples of specific functionals whose gradient flows yield spectral decompositions of \emph{any data}, we first give a necessary and sufficient condition on the data such that the gradient flow of \emph{any functional} computes a spectral decomposition of the data. \change{More precisely, we show that the necessary condition \eqref{eq:hierarchy_sg} derived in Corollary~\ref{cor:hierarchy} is also sufficient. This condition generalizes} the (SUB0) + orthogonality condition defined in \cite{schmidt2018inverse}, which appears to be the only sufficient condition in that line.

\begin{thm}\label{thm:suff_cond_decomp}
Let $T>0$ and assume that 
\begin{align}\label{eq:nec_suff_cond_dec}
f=\int_0^\infty p(s)\d s,\quad p(s)\in\partial J(p(t)),\quad\forall 0< s\leq t.
\end{align}
Then $u(t)=\int_t^\infty p(s)\d s$ solves the gradient flow \eqref{eq:gradient_flow}, i.e., $\partial_t u(t)=-p(t)\in\partial J(u(t))$ for $t>0$.
\end{thm}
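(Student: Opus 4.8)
The plan is to verify the two defining properties of a solution to \eqref{eq:gradient_flow} separately: the differential identity $\partial_t u(t)=-p(t)$ and the subgradient inclusion $p(t)\in\partial J(u(t))$. The first is immediate from the construction $u(t)=\int_t^\infty p(s)\d s$: differentiating in $t$ via the fundamental theorem of calculus for the Bochner integral yields $\partial_t u(t)=-p(t)$. Note also that $u(t)$ is a well-defined element of $\H$ because $u(t)=f-\int_0^t p(s)\d s$ and the improper integral $f=\int_0^\infty p(s)\d s$ converges by assumption. Thus the entire content of the theorem lies in establishing $p(t)\in\partial J(u(t))$.

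For the inclusion, recall that $p\in\partial J(u)$ is equivalent to the two conditions $p\in K=\partial J(0)$ (that is, $\langle p,v\rangle\leq J(v)$ for all $v\in\H$) together with $\langle p,u\rangle=J(u)$. Taking $s=t$ in the hypothesis gives $p(t)\in\partial J(p(t))\subset\partial J(0)=K$ by item 5 of Proposition~\ref{prop:properties_one-hom}; this secures the first condition and, in particular, the ``easy'' inequality $\langle p(t),u(t)\rangle\leq J(u(t))$. It therefore remains to prove the reverse inequality $J(u(t))\leq\langle p(t),u(t)\rangle$.

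For the reverse inequality I would use the hypothesis in the form: for every $s\geq t$ one has $p(t)\in\partial J(p(s))$, obtained by relabelling the assumption $p(\sigma)\in\partial J(p(\tau))$ for $\sigma\leq\tau$ with $\sigma=t$ and $\tau=s$. The defining equality of the subdifferential then gives $\langle p(t),p(s)\rangle=J(p(s))$ for all $s\geq t$. Since $\langle p(t),\cdot\rangle$ is a continuous linear functional it commutes with the Bochner integral, and combining this with the generalized triangle inequality of Remark~\ref{rem:triangle_integral} applied to $u(t)=\int_t^\infty p(s)\d s$ yields
\begin{align*}
J(u(t))=J\!\left(\int_t^\infty p(s)\d s\right)\leq\int_t^\infty J(p(s))\d s=\int_t^\infty\langle p(t),p(s)\rangle\d s=\langle p(t),u(t)\rangle.
\end{align*}
Together with the easy direction this forces $\langle p(t),u(t)\rangle=J(u(t))$, so that $p(t)\in\partial J(u(t))$, completing the proof.

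I expect the only delicate points to be technical rather than conceptual: the convergence of the improper integral defining $u(t)$ in $\H$, the legitimacy of pulling $p(t)$ inside the integral, and extending Remark~\ref{rem:triangle_integral} to the unbounded interval $[t,\infty)$. None of these should be serious obstacles, since the integrand $\langle p(t),p(s)\rangle=J(p(s))=\norm{p(s)}^2\geq0$ is nonnegative, so all integrals are well-defined in $[0,\infty]$ and their finiteness is inherited from finiteness of $\langle p(t),u(t)\rangle$. The genuine conceptual core is the observation that the standing hypothesis forces the inner products $\langle p(t),p(s)\rangle$ to equal $J(p(s))$ for every $s\geq t$; this is precisely what makes the otherwise one-sided triangle inequality tight and thereby turns the subgradient inequality into the required equality.
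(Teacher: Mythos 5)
Your proposal is correct and follows essentially the same route as the paper's proof: the differential identity is immediate, membership $p(t)\in K$ handles one direction of the subgradient inequality, and the reverse direction comes from the generalized triangle inequality of Remark~\ref{rem:triangle_integral} combined with the hypothesis $p(t)\in\partial J(p(s))$ for $t\leq s$, which makes $J(p(s))=\langle p(t),p(s)\rangle$ and lets the linear functional be pulled out of the integral. Your write-up is in fact slightly more careful than the paper's (which leaves the $p(t)\in K$ step and the integral-convergence issues implicit, and contains a stray $T$ in its display), but the argument is the same.
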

\begin{proof}
Obviously, it holds $\partial_tu(t)=-p(t)$ and thus it remains to be checked that $p(t)\in\partial J(u(t))$ for $t>0$. We compute 
$$J(u(t))=J\left(\int_t^\infty p(s)\d s\right)\leq\int_t^\infty J(p(s))\d s=\int_t^T\langle p(t),p(s)\rangle\d s=\langle p(t),u(t)\rangle.$$
Together with $p(t)\in K$ for all $t>0$ this concludes the proof.
\end{proof}
\change{
\begin{definition}[(SUB0) + orthogonality \cite{schmidt2018inverse}]\label{def:sub0}
We say that $f\in\H$ meets the (SUB0) + orthogonality condition if there are $N\in\N$ and positive numbers $\gamma_i,\,\lambda_i$ for $i=1,\dots,N$ such that 
$$f=\sum_{i=1}^N\gamma_i u_i$$
where $p_i:=\lambda_i u_i$ meet
\begin{itemize}
\item $p_i\in\partial J(p_i)$ for $i=1,\dots,N$,
\item $\sum_{i=j}^N p_i\in \partial J(0)$ for all $j=1,\dots,N$,
\item $\langle p_i, p_j\rangle = 0$ for $i,j=1,\dots,N$ with $i\neq j$.
\end{itemize}
\end{definition}
In \cite{schmidt2018inverse} the authors proved that the inverse scale space flow \eqref{eq:iss_flow} yields a decomposition into eigenvectors if the datum meets the conditions above. In the following we prove that their condition is a special case of our necessary and sufficient condition \eqref{eq:nec_suff_cond_dec}. 
}

\begin{thm}
Any datum $f$ fulfilling the conditions from Definition~\ref{def:sub0} also meets \eqref{eq:nec_suff_cond_dec}.
\end{thm}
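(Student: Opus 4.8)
The plan is to build an explicit decomposition of the type demanded by~\eqref{eq:nec_suff_cond_dec} directly from the eigenvectors $p_i=\lambda_i u_i$ provided by Definition~\ref{def:sub0}, and then invoke Theorem~\ref{thm:suff_cond_decomp}. First I would collect the normalizations implied by the hypotheses: from $p_i\in\partial J(p_i)$ we get $J(p_i)=\langle p_i,p_i\rangle=\norm{p_i}^2$, and since subgradients are orthogonal to $\calN(J)$ (item~7 of Proposition~\ref{prop:properties_one-hom}) each $u_i$, hence $f$, lies in $\calN(J)^\perp$; thus $\bar f=0$ and $f=\sum_{i=1}^N\beta_i p_i$ with $\beta_i:=\gamma_i/\lambda_i>0$. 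I then propose the staircase
\begin{align*}
p(s):=\sum_{i\,:\,\beta_i>s}p_i,\qquad s>0,
\end{align*}
which switches off one eigenvector at each \emph{extinction time} $\beta_i$. Because the $p_i$ are orthogonal, hence linearly independent, the reconstruction identity is immediate:
\begin{align*}
\int_0^\infty p(s)\,\d s=\sum_{i=1}^N p_i\int_0^{\beta_i}\d s=\sum_{i=1}^N\beta_i p_i=f.
\end{align*}

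For the hierarchy $p(s)\in\partial J(p(t))$ with $0<s\le t$ I would exploit that the active index sets are nested, $\{i:\beta_i>t\}\subseteq\{i:\beta_i>s\}$. Orthogonality then gives $\langle p(s),p(t)\rangle=\sum_{i:\beta_i>t}\norm{p_i}^2=\norm{p(t)}^2$, while the triangle inequality (item~2 of Proposition~\ref{prop:properties_one-hom}, iterated) yields $J(p(t))\le\sum_{i:\beta_i>t}J(p_i)=\norm{p(t)}^2$. Consequently, as soon as we know $p(t)\in K$, we also obtain $J(p(t))\ge\langle p(t),p(t)\rangle=\norm{p(t)}^2$, so that $J(p(t))=\langle p(s),p(t)\rangle$ and therefore $p(s)\in\partial J(p(t))$.

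Everything thus reduces to the single membership assertion $p(s)\in K=\partial J(0)$ for every $s>0$, which is the step I expect to be the main obstacle. This is exactly where the tail-sum hypothesis $\sum_{i=j}^N p_i\in\partial J(0)$ of Definition~\ref{def:sub0} enters, in combination with the eigenvector characterization of Proposition~\ref{prop:characterization_eigenvectors}. The delicate point is that $p(s)$ is the sum over the set $\{i:\beta_i>s\}$, whereas the hypothesis only guarantees membership in $K$ for the \emph{tail sets} $\{j,\dots,N\}$; the two families coincide precisely when the indexing is compatible with the ordering of the extinction times. I would therefore first attempt to reindex so that $\beta_1\le\dots\le\beta_N$ while preserving the tail-sum property, and I anticipate that reconciling the two orderings—showing that the partial sums actually occurring are subgradients—is where the full force of the orthogonality assumption, rather than merely linear independence, has to be used. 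Should a compatible reindexing be unavailable, the fallback is to argue at the level of the flow itself, showing that the minimal-norm subgradients it selects are eigenvectors obtained by recombining the given $p_i$, which is a genuinely more involved route. Once $p(s)\in K$ is secured for all $s$, each $p(s)$ is an eigenvector and Theorem~\ref{thm:suff_cond_decomp} identifies $u(t)=\int_t^\infty p(s)\,\d s$ as the gradient flow, establishing~\eqref{eq:nec_suff_cond_dec}.
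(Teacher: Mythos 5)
Your construction is the paper's construction: the paper also sets $t_i=\gamma_i/\lambda_i$, $t_0:=0$, assumes $t_1<\dots<t_N$, and defines the staircase $p(s):=\sum_{i=j}^N p_i$ for $s\in[t_{j-1},t_j]$, so that $f=\int_0^{t_N}p(s)\,\d s$. The only real difference is how the hierarchy $p(s)\in\partial J(p(t))$ is verified: the paper outsources this to \cite[Proposition~3.4]{schmidt2018inverse}, whereas your argument --- orthogonality gives $\langle p(s),p(t)\rangle=\norm{p(t)}^2$, subadditivity together with $J(p_i)=\norm{p_i}^2$ gives $J(p(t))\leq\norm{p(t)}^2$, and $p(t)\in K$ gives the reverse inequality --- is correct and self-contained, a modest improvement.

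The step you flag as the main obstacle, $p(s)\in K$, is exactly the step the paper dispatches in one clause: ``we can assume the ordering $t_i<t_j$ for $i<j$.'' In other words, the paper reads Definition~\ref{def:sub0} with the indexing already ordered by the extinction times; then your active sets $\{i\st\beta_i>s\}$ are precisely the tail sets $\{j,\dots,N\}$, the second bullet of the definition hands you $p(s)\in\partial J(0)=K$, and your proof closes. So you stopped one line short of the paper's proof, and the missing line is an assumption, not a lemma. Indeed, your suspicion about reindexing is mathematically justified and cannot be removed by any clever argument: the tail-sum condition is not permutation invariant, even for orthonormal $p_i$. Take $\H=\R^3$, $p_i=e_i$, and $K=\overline{\conv}\{\pm e_1,\pm e_2,\pm e_3,\pm(e_2+e_3),\pm(e_1+e_2+e_3)\}$: all three bullets of Definition~\ref{def:sub0} hold for the indexing $(1,2,3)$, but $e_1+e_3\notin K$, so for $\gamma=(2,1,3)$ (hence $\beta_2<\beta_1<\beta_3$) the staircase reordered by extinction times leaves $K$. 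The conclusion of the theorem nevertheless holds in this example, but via a different family: the gradient flow recombines the data into the eigenvectors $e_1+e_2+e_3$ on $(0,\tfrac32)$, then $e_3$, then $\tfrac13(e_1-e_2+e_3)$, which is exactly your ``fallback at the level of the flow.'' In summary: modulo the ordering convention that the paper (following \cite{schmidt2018inverse}) silently builds into the hypothesis, your proposal coincides with the paper's proof; without that convention, neither your argument nor the paper's literal argument goes through, and the more involved flow-level route you sketch would genuinely be needed.
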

\begin{proof}
Any finite representation of the data as
$$f=\sum_{i=1}^N\gamma_i u_i$$
with numbers $\gamma_i\geq 0$ and eigenvectors $u_i$ meeting $\lambda_i u_i\in\partial J(u_i)$ can be rewritten as
$$f=t_1\sum_{i=1}^N\lambda_iu_i+(t_2-t_1)\sum_{i=2}^N\lambda_iu_i+\dots+(t_N-t_{N-1})\sum_{i=N}^N\lambda_iu_i$$
where $t_i:=\gamma_i/\lambda_i$ and we can assume the ordering $t_i<t_j$ for $i<j$. Consequently, we can define
$$p(s):=\sum_{i=j}^N\lambda_iu_i,\;s\in[t_{j-1},t_j]$$
for $i=1,\dots,N$ and $t_0:=0$. On one hand, this yields $f=\int_0^Tp(s)\d s$ where $T:=t_N$. On the other hand, using \cite[Proposition~3.4]{schmidt2018inverse} one can easily calculate that $p(s)\in\partial J(p(t))$ holds for $s\leq t$, which is condition \eqref{eq:nec_suff_cond_dec}.
\end{proof}

\subsection{Short time behavior}
Finally, we investigate the consequences of a gradient flow which generates eigenvectors for the short time behavior of the map $t\mapsto u(t)$. From the temporal continuity it is obvious that $u(t)\to u(0)=f$ in $\H$ as $t\searrow 0$ which holds in general. However, in the case that $f\in\dom(J)$ and the gradient flow yields a spectral decomposition into eigenvectors more can be shown. In fact, $u(t)$ converges to $f$ also in the semi-norm $J$ which generalizes known results for the one-dimensional ROF problem~\cite{overgaard2017taut}.
\begin{thm}
Let $f\in\dom(J)$ and assume that the gradient flow \eqref{eq:gradient_flow} generates eigenvectors. Then it holds $J(f-u(t))\to 0$ as $t\searrow 0$.
\end{thm}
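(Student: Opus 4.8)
The plan is to exploit the identity $f-u(t)=\int_0^t p(s)\d s$, which follows immediately from integrating the flow equation $\partial_t u=-p$ in \eqref{eq:gradient_flow}. Applying the generalized triangle inequality from Remark~\ref{rem:triangle_integral} and using that each $p(s)$ is an eigenvector with eigenvalue one, so that $J(p(s))=\langle p(s),p(s)\rangle=\norm{p(s)}^2$, I would estimate
\begin{align*}
J(f-u(t))=J\left(\int_0^t p(s)\d s\right)\leq\int_0^t J(p(s))\d s=\int_0^t\norm{p(s)}^2\d s.
\end{align*}
Thus it suffices to show that the right-hand side tends to $0$ as $t\searrow 0$.

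Next I would identify this integral through the energy dissipation of the flow. By item 5 of Theorem~\ref{thm:brezis2} one has $\frac{\d}{\d s}J(u(s))=-\norm{p(s)}^2$, and since $s\mapsto J(u(s))$ is Lipschitz, hence absolutely continuous, on every interval $[\eps,t]$, integration yields $\int_\eps^t\norm{p(s)}^2\d s=J(u(\eps))-J(u(t))$. Letting $\eps\searrow 0$ and using monotone convergence for the nonnegative integrand gives
\begin{align*}
\int_0^t\norm{p(s)}^2\d s=\lim_{\eps\searrow0}J(u(\eps))-J(u(t)).
\end{align*}
Hence the claim reduces to establishing $\lim_{t\searrow0}J(u(t))=J(f)$, because then $\int_0^t\norm{p(s)}^2\d s=J(f)-J(u(t))\to0$.

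To prove the continuity $J(u(t))\to J(f)$ as $t\searrow0$ I would argue from two sides. For the upper bound I use the equivalence to the variational problem \eqref{eq:var_prob} from Theorem~\ref{thm:equivalence_gf_vp}: since $u(t)$ minimizes $E_t(v)=\tfrac12\norm{v-f}^2+tJ(v)$ and $f$ is admissible, $E_t(u(t))\leq E_t(f)=tJ(f)$, which gives $J(u(t))\leq J(f)$ for all $t>0$, the right-hand side being finite because $f\in\dom(J)$. For the lower bound I use that $u(t)\to f$ strongly together with the lower semicontinuity of $J$, so that $\liminf_{t\searrow0}J(u(t))\geq J(f)$. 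Combining the two bounds yields $\lim_{t\searrow0}J(u(t))=J(f)$, which closes the argument.

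The main obstacle is the behavior near $t=0$: since $f$ need not lie in $\dom(\partial J)$, the subgradients $p(s)$ may be unbounded as $s\searrow0$, so neither the finiteness of $\int_0^t\norm{p(s)}^2\d s$ nor the validity of the dissipation identity down to $0$ is immediate. Both are resolved precisely by the bound $J(u(t))\leq J(f)<\infty$ coming from the variational characterization, which controls the total dissipated energy. The eigenvector hypothesis enters crucially through $J(p(s))=\norm{p(s)}^2$, converting the $J$-triangle estimate into the dissipated $\H$-energy and thereby linking the desired $J$-convergence to the already-known energy balance of Theorem~\ref{thm:brezis2}.
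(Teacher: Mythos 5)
Your proof is correct, and its first half coincides with the paper's: both start from $f-u(t)=\int_0^t p(s)\,\d s$, apply the generalized triangle inequality of Remark~\ref{rem:triangle_integral}, and use the eigenvector property to get $J(f-u(t))\leq\int_0^t\norm{p(s)}^2\d s$. You diverge in how the right-hand side is shown to vanish. The paper cites an external regularity theorem of Brezis, namely that $s\mapsto\norm{p(s)}$ lies in $L^2(0,\delta)$ if and only if $f\in\dom(J)$, and then invokes absolute continuity of the integral. You instead derive the integrability inside the paper's own toolkit: integrating the dissipation identity $\frac{\d}{\d t}J(u(t))=-\norm{p(t)}^2$ from Theorem~\ref{thm:brezis2} gives $\int_\eps^t\norm{p(s)}^2\d s=J(u(\eps))-J(u(t))$, and the sandwich $J(u(t))\leq J(f)$ (via the GF--VP equivalence of Theorem~\ref{thm:equivalence_gf_vp}) together with lower semicontinuity of $J$ and $u(t)\to f$ yields $J(u(t))\to J(f)$, hence $\int_0^t\norm{p(s)}^2\d s=J(f)-J(u(t))\to0$. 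In effect you re-prove, in this special setting, the implication of the cited Brezis result that the paper uses as a black box. What your route buys is a self-contained argument plus a quantitative refinement, $J(f-u(t))\leq J(f)-J(u(t))$, and the continuity statement $J(u(t))\to J(f)$; what it costs is a second invocation of the eigenvector hypothesis (through Theorem~\ref{thm:equivalence_gf_vp}), although that use is avoidable since $J(u(t))\leq J(f)$ already follows for general gradient flows from the monotonicity in Theorem~\ref{thm:brezis2}. One ordering caveat, which you yourself flag: the identity $f-u(t)=\int_0^t p(s)\,\d s$ and the Jensen step require $p\in L^1(0,t;\H)$ near $0$, so logically the dissipation bound should be established first and the triangle-inequality step applied afterwards; with that rearrangement the argument is complete.
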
 
\begin{proof}
Using the definition of the gradient flow, we write $f-u(t)=\int_0^t p(s)\d s$. Since all the subgradients $p(s)$ for $s>0$ are eigenvectors, this implies together with the triangle inequality from Remark~\ref{rem:triangle_integral}
$$0\leq J(f-u(t))\leq\int_0^tJ(p(s))\d s=\int_0^t\norm{p(s)}^2\d s.$$
According to \cite{brezis1971proprietes} the map $s\mapsto\norm{p(s)}$ is in $L^2(0,\delta)$ for all $\delta>0$ if and only if $f\in\dom(J)$. Hence, the dominated convergence theorem yields $\lim_{t\searrow 0}J(f-u(t))=0$, as desired.
\end{proof}

\section{Examples of flows that yield a spectral decomposition}\label{sec:examples}

In the following we discuss some relevant examples of functionals $J$ and corresponding flows that yield a spectral decomposition, \change{meaning that the decomposition
$$f-\bar{f}=\int_0^\infty p(t)\d t$$
induced by the gradient flow is a decomposition into eigenvectors $p(t)\in\partial J(p(t))$ for all $t>0$.}

\subsection{Polyhedral flow with \eqref{eq:minsub}}\label{sec:poly_flow}
As already indicated, we can prove that condition \eqref{eq:minsub} \change{introduced in Proposition~\ref{polyhedraleigenvectors}} is already sufficient for the gradient flow yielding eigenvectors, which is an improvement of the results in \cite{burger2016spectral}.
\begin{thm}[Polyhedral gradient flow]\label{thm:poly-flow}
Let $K=\partial J(0)$ be a polyhedron satisfying \eqref{eq:minsub} \change{from Proposition~\ref{polyhedraleigenvectors}}. Then the gradient flow $\partial_t u = - p ,\;p\in \partial J(u)$ with $u(0)=f$ yields a sequence of eigenvectors $p(t)$ for $t>0$. Furthermore, $t\mapsto p(t)$ takes only finitely many different values and there is $T>0$ such that $p(t)=0$ for all $t\geq T$.
\end{thm}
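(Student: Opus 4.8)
The plan is to establish the three assertions in turn, combining the Brezis existence theory (Theorem~\ref{thm:brezis2}) with the combinatorial structure of the polyhedron $K$. Recall first that the flow selects, for every $t>0$, the subgradient of minimal norm $p(t)=A^0u(t)=\argmin\{\norm{q}\st q\in\partial J(u(t))\}$. Since $K$ is a polyhedron satisfying \eqref{eq:minsub}, Proposition~\ref{polyhedraleigenvectors} applies with $u=u(t)$ and shows directly that this minimal-norm subgradient is an eigenvector with eigenvalue one, i.e.\ $p(t)\in\partial J(p(t))$ for all $t>0$. This settles the eigenvector claim with no further work.

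For the finiteness of the range of $p$, the key is to identify $\partial J(u)$ with a face of $K$. Indeed, from the definition of the subdifferential the condition $\langle q,v\rangle\leq J(v)$ for all $v$ is equivalent to $q\in\partial J(0)=K$, so that $\partial J(u)=\{q\in K\st\langle q,u\rangle=J(u)\}$ is exactly the face of $K$ exposed by the linear functional $\langle\cdot,u\rangle$ (and equals $K$ itself when $u\in\calN(J)$). A convex polyhedron has only finitely many faces; hence, as $u$ ranges over $\R^n$, the set $\partial J(u)$ takes only finitely many values, and for each such face $F$ the norm-minimal element $\argmin\{\norm{q}\st q\in F\}$ is uniquely determined. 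Consequently $p(t)=A^0u(t)$ lies in the finite set of norm-minimal elements of the faces of $K$, which proves that $t\mapsto p(t)$ assumes only finitely many distinct values.

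Finite extinction then follows from energy dissipation. Let $c>0$ be the smallest positive value of $\norm{p(t)}$ among the finitely many possibilities, so that $p(t)\neq0$ forces $\norm{p(t)}\geq c$. By item~5 of Theorem~\ref{thm:brezis2} we have $\frac{\d}{\d t}J(u(t))=-\norm{p(t)}^2$, so integrating from any fixed $t_0>0$ (where $J(u(t_0))<\infty$) shows that on the set where $p$ is nonzero the nonnegative quantity $J(u(t))$ decreases at rate at least $c^2$; this cannot persist indefinitely, so $p$ must vanish at some finite time. To see it stays zero afterwards, note that $p(t)=0$ is equivalent to $0\in\partial J(u(t))$, i.e.\ to $J(u(t))=0$ since $0$ is the minimal value of $J$; as $t\mapsto J(u(t))$ is continuous, non-increasing and nonnegative, the set $\{t\st J(u(t))=0\}$ is a closed half-line $[T,\infty)$, on which $u(t)\in\calN(J)$ and hence $p(t)=A^0u(t)=0$.

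I expect the main obstacle to be the second step, namely the rigorous identification of $\partial J(u)$ with an exposed face of the polyhedron together with the uniqueness of the norm-minimal point of each face, which is what forces $A^0u(t)$ to range over a finite set. Once this finiteness is secured, the eigenvector property is immediate from Proposition~\ref{polyhedraleigenvectors}, and the extinction statement reduces to the scalar dissipation identity combined with the monotonicity of $t\mapsto J(u(t))$; one could alternatively deduce the ``stays zero'' part from uniqueness of the gradient flow, since the constant function solves \eqref{eq:gradient_flow} once the trajectory reaches $\calN(J)$.
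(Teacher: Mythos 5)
Your proof is correct, and it splits naturally into two halves relative to the paper. For the eigenvector claim you argue exactly as the paper does: Theorem~\ref{thm:brezis2} shows the flow selects $p(t)=A^0u(t)$, the subgradient of minimal norm, and Proposition~\ref{polyhedraleigenvectors} then makes each $p(t)$ an eigenvector. For the remaining two claims, however, the paper simply cites \cite{burger2016spectral}, whereas you give a self-contained argument, and it is sound: $\partial J(u)=\{q\in K\st\langle q,u\rangle=J(u)\}$ is indeed the face of $K$ exposed by $u$ (and all of $K$ when $u\in\calN(J)$), a polyhedron has finitely many faces, and each closed convex face has a unique norm-minimal point, so $p(t)$ ranges over a finite set; the extinction argument then works because nonzero values of $\norm{p(t)}$ are bounded below by some $c>0$, the dissipation identity $\frac{\d}{\d t}J(u(t))=-\norm{p(t)}^2$ from item~5 of Theorem~\ref{thm:brezis2} forces $J(u(t))$ to hit zero in finite time, and your observation that $p(t)=0\iff 0\in\partial J(u(t))\iff J(u(t))=0$, combined with monotonicity and continuity of $t\mapsto J(u(t))$, shows the zero set is a half-line $[T,\infty)$. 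Two small remarks on what each route buys: the citation in the paper actually yields the slightly stronger fact that $p$ is \emph{piecewise constant} in $t$ (finiteness of the range alone does not rule out oscillation among the finitely many values, though right-continuity of $t\mapsto A^0u(t)$ could be used to upgrade your conclusion), but the theorem as stated only claims finitely many values, which you prove; conversely, your argument is more informative in that it gives an explicit extinction bound $T\leq t_0+J(u(t_0))/c^2$ and makes the result independent of the external reference.
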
 
\begin{proof}
The gradient flow selects subgradients of minimal norm (see Theorem~\ref{thm:brezis2}), all of them are eigenvectors due to Proposition \ref{polyhedraleigenvectors}. According to \cite{burger2016spectral} we also know that $p$ is piecewise constant in~$t$ and that the flow becomes extinct in finite time.
\end{proof}

As mentioned in Remark~\ref{imprem:finite_dim}, in finite dimensions functionals of the type $J(u)=\norm{Au}_1$ with $AA^T$ diagonally dominant satisfy \eqref{eq:minsub} and thus suffice for the gradient flow to yield eigenvectors. Therefore, we will now study functionals of the type $J(u)=\norm{Au}_1$ on $\H=L^2$ where $A$ is a suitable operator such that formally $AA'$ is diagonally dominant. Here $A'$ denotes the adjoint of $A$ with respect to the inner product on $L^2$. Apart from the trivial choice $A=\id$ which will be shortly dealt with in the following section, a natural choice is $A=\div$ since then formally $AA'=\div\nabla=-\Delta$ which is diagonally dominant. Indeed, this does the trick as we will see. Note that in one space dimension, $A$ reduces to the usual derivation operator which means that $J(u)=\norm{u'}_1$ formally becomes the total variation of $u$. Since, compared to the one-dimensional case, the proof that the subgradients of minimal norm are eigenvectors is much more technically involved in two or more space dimensions, we give the 1D proof first to illustrate the ideas before proving the statement in arbitrary space dimension in the subsequent section.

\subsection{$L^1$-flow}\label{sec:l1flow}
We consider $\H=L^2(\Omega)$ and
\begin{align}\label{eq:l1norm}
J(u)=\begin{cases}
\norm{u}_{L^1(\Omega)},\quad&u\in L^1(\Omega)\cap L^2(\Omega),\\
+\infty,\quad&\text{else}.
\end{cases}
\end{align}
Then it can be easily seen that $K=\{p\in L^2(\Omega)\st \norm{p}_{L^\infty}\leq 1\}$ and for any $u\in\dom(\partial J)$ we have
$$\partial J(u)=\left\{p\in K\st\int_\Omega pu\dx=\int_\Omega|u|\dx\right\},$$
i.e., $p(x)=u(x)/|u(x)|$ for $u(x)\neq 0$ and $p(x)\in[-1,1]$ else. The subgradient of minimal norm in $\partial J(u)$ fulfills $p(x)=0$ in the latter case.

\begin{prop}
Let $u\in\dom(\partial J)$ and let $p\in\partial J(u)$ be the subgradient of minimal norm. Then it holds $\langle p,p-q\rangle\geq0$ for all $q\in K.$
\end{prop}
\begin{proof}
We calculate with Cauchy-Schwarz
$$\langle p,p-q\rangle=\int_\Omega|p|^2\dx-\int_\Omega pq\dx\geq\int_\Omega|p|^2\dx-\int_\Omega |p|\dx\geq0$$
since $p$ only takes the values $0$, $-1$, or $+1$ and thus satisfies $|p|=|p|^2$.
\end{proof}

Thus, we obtain the following result:
\begin{thm}[$L^1$-flow]
Let $J$ be given by \eqref{eq:l1norm}. Then the gradient flow $\partial_tu(t)=-p(t),\;p(t)\in\partial J(u(t))$ with $u(0)=f$ generates as sequence of eigenvectors $p(t)$ for~$t>0$.
\end{thm}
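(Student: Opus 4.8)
The plan is to observe that the assertion is an immediate consequence of three ingredients already assembled above: Brezis' regularity theory for the gradient flow, the preceding Proposition characterizing the minimal-norm subgradient of the $L^1$-functional, and the geometric eigenvector characterization of Proposition~\ref{prop:characterization_eigenvectors}. In other words, the genuine work has already been done, and the theorem is essentially a corollary assembling these facts.

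First I would invoke Theorem~\ref{thm:brezis2} applied to $A=\partial J$ with $J$ as in \eqref{eq:l1norm}. Since $J$ is convex, lower semicontinuous and proper (indeed $J\in\calC$), the flow admits a unique solution $u$ with $u(t)\in\dom(\partial J)$ for all $t>0$, and the subgradient generated along the flow, $p(t)=-\partial_t^+u(t)$, coincides with $A^0u(t)$, i.e.\ with the element of minimal norm in $\partial J(u(t))$. Second, for each fixed $t>0$ I would apply the preceding Proposition with $u:=u(t)$ and $p:=p(t)$: since $u(t)\in\dom(\partial J)$ and $p(t)$ is the subgradient of minimal norm, the Proposition yields
\begin{align}
\langle p(t),p(t)-q\rangle\geq0,\quad\forall q\in K.
\end{align}
Finally, Proposition~\ref{prop:characterization_eigenvectors} translates this inequality precisely into the statement that $p(t)$ is an eigenvector of $\partial J$ with eigenvalue $1$, i.e.\ $p(t)\in\partial J(p(t))$, which is the claim.

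As for the main obstacle, there is essentially none at the level of this theorem: the only substantive computation, namely that the minimal-norm subgradient of the $L^1$-functional takes values in $\{-1,0,1\}$ and hence satisfies $|p|=|p|^2$, producing the desired inequality via Cauchy--Schwarz, was already carried out in the preceding Proposition. The single point requiring care is the bookkeeping of ensuring that the minimal-norm subgradient selected along the flow is exactly the one analyzed in that Proposition, together with the membership $u(t)\in\dom(\partial J)$; both follow directly from Theorem~\ref{thm:brezis2}. It is worth emphasizing that this example is instructive precisely because the flow still produces eigenvectors even though no Poincar\'e-type inequality relates $J$ to the $\H=L^2$ norm, so the favourable properties of the decomposition here stem purely from the pointwise structure of $\partial J$ rather than from a Gelfand-triple embedding.
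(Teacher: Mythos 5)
Your proof is correct and follows exactly the paper's (implicit) argument: the paper derives this theorem immediately from the preceding proposition via the same chain — Brezis' theory (Theorem~\ref{thm:brezis2}) guarantees the flow selects the minimal-norm subgradient, the preceding proposition gives $\langle p(t),p(t)-q\rangle\geq 0$ for all $q\in K$, and Proposition~\ref{prop:characterization_eigenvectors} converts this into $p(t)\in\partial J(p(t))$. Nothing is missing; this is the same assembly the paper uses for all its example flows.
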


Of course this example is of limited practical relevance, but can yield some insight since we can explicitly compute the solutions of the gradient flow noticing that the subgradient just equals the sign of $u$ pointwise, hence splitting $f = f_+ - f_{-}$ with nonnegative functions $f_+$ and $f_-$ of disjoint support we have
$$ u(x,t) =   (f_+(x) - t)_+ - (f_-(x) - t)_+ .$$

\change{
\subsection{$L^\infty$-flow}\label{sec:linfflow}
As before we consider $\H=L^2(\Omega)$ and define
\begin{align}\label{eq:linfnorm}
J(u)=\begin{cases}
\norm{u}_{L^\infty(\Omega)},\quad&u\in L^1(\Omega)\cap L^\infty(\Omega),\\
+\infty,\quad&\text{else}.
\end{cases}
\end{align}
It is obvious that $K=\{p\in L^2(\Omega)\st\norm{p}_{L^1(\Omega)}\leq 1\}$ and in order to study $\partial J(u)$ we need to define the \secchange{set of points where $|u|$ attains its essential maximum:
\begin{align}
\Omega_{\max}:=\{x\in\Omega\st |u(x)|=J(u)\},
\end{align}
which is defined up to Lebesgue measure zero. It is easy to see that for every $u\in\dom(\partial J)$ it holds
\begin{align*}
\partial J(u)=\left\lbrace p\in L^2(\Omega)\st \norm{p}_{L^1(\Omega)}=1,\;p=0\text{ a.e. in }\Omega\setminus\Omega_{\max},\; \sgn(p)=\sgn(u)\text{ a.e. in }{\Omega}_{\max}\right\rbrace.
\end{align*}}
Note that the subgradient of minimal norm is given by 
$$p(x)=
\begin{cases}
\frac{\sgn(u)}{|{\Omega}_{\max}|},\quad&x\in{\Omega}_{\max},\\
0,\quad&\text{else,}
\end{cases}
$$
which follows from 
$$1=\norm{q}_{L^1(\Omega)}^2\leq|\Omega_{\max}|\norm{q}_{L^2(\Omega)}^2,\quad\forall q\in\partial J(u)$$
together with the fact that $\norm{p}_{L^2(\Omega)}^2=1/|\Omega_{\max}|$. 
\begin{prop}
Let $u\in\dom(\partial J)$ and let $p\in\partial J(u)$ be the subgradient of minimal norm. Then it holds $\langle p,p-q\rangle\geq0$ for all $q\in K.$
\end{prop}
\begin{proof}
Using $\norm{q}_{L^1(\Omega)}\leq 1$ for $q\in K$ and the explicit form of the subgradient of minimal norm, we calculate
\begin{align*}
\langle p,p-q\rangle&=\norm{p}_{L^2(\Omega)}^2-\frac{1}{|\Omega_{\max}|}\int_{\Omega_{\max}}\sgn(u)q\dx\geq \frac{1}{|\Omega_{\max}|}-\frac{1}{|\Omega_{\max}|}\int_{\Omega_{\max}}|q|\dx\geq 0.
\end{align*}
\end{proof}
Just as before, we obtain the following result:
\begin{thm}[$L^\infty$-flow]
Let $J$ be given by \eqref{eq:linfnorm}. Then the gradient flow $\partial_tu(t)=-p(t),\;p(t)\in\partial J(u(t))$ with $u(0)=f$ generates a sequence of eigenvectors $p(t)$ for~$t>0$.
\end{thm}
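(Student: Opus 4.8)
The plan is to recycle, essentially verbatim, the reasoning that produced the $L^1$-flow result, since every piece of nontrivial content has already been supplied by the preceding Proposition and the general machinery of Sections~\ref{sec:spec_dec_GF} and \ref{sec:geo_char}. The statement will follow by assembling three facts in their natural order. First I would invoke the Brezis theory (Theorem~\ref{thm:brezis2}) to obtain, for the given initial datum $f$, the unique solution $u$ of \eqref{eq:gradient_flow} whose right-derivative satisfies $p(t)=-\partial_t^+u(t)=A^0u(t)$; that is, at each $t>0$ the subgradient $p(t)$ produced by the flow is precisely the element of minimal norm in $\partial J(u(t))$. This is exactly the regime in which the preceding Proposition applies.

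Next I would apply that Proposition with $u:=u(t)$, which yields the geometric inequality $\langle p(t),\,p(t)-q\rangle\geq 0$ for every $q\in K$ and every $t>0$. Since every subgradient lies in $K=\partial J(0)$ by item~5 of Proposition~\ref{prop:properties_one-hom}, we have $p(t)\in K$, so Proposition~\ref{prop:characterization_eigenvectors} is directly applicable and identifies each $p(t)$ as an eigenvector with eigenvalue~$1$. Equivalently, this is exactly the condition appearing in the characterization theorem of Section~\ref{sec:spec_dec_GF} for the flow to generate eigenvectors. Hence $p(t)\in\partial J(p(t))$ for all $t>0$, which is the assertion.

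I do not anticipate any genuine obstacle: the only technical work — writing the minimal-norm subgradient explicitly as $p=\sgn(u)/|\Omega_{\max}|$ on $\Omega_{\max}$ and verifying the inequality via the $L^1$-bound on $K$ — was already carried out in the preceding Proposition. The sole point that deserves a moment's care is purely a matter of bookkeeping, namely confirming that the subgradient selected by the flow coincides with the $p$ for which the inequality was established; this identification is immediate from the right-differentiability statement in Theorem~\ref{thm:brezis2}. The argument is thus structurally identical to the $L^1$-flow case, differing only in the explicit form of the minimal-norm subgradient.
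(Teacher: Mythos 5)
Your proposal is correct and follows exactly the paper's (implicit) argument: the paper derives this theorem "just as before" from the preceding proposition, i.e., Brezis' theory (Theorem~\ref{thm:brezis2}) guarantees the flow selects the minimal-norm subgradient $p(t)\in\partial J(u(t))$, the preceding proposition gives $\langle p(t),p(t)-q\rangle\geq 0$ for all $q\in K$, and Proposition~\ref{prop:characterization_eigenvectors} then identifies $p(t)$ as an eigenvector. Nothing is missing; the assembly of these three facts is the entire proof.
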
}

\subsection{1D total variation flow}\label{sec:1D-TV-flow}
Let $I \subset \R$ be a bounded interval and $J$ be the total variation defined on $L^2(I)$, i.e. extended by infinity outside $BV(I)$. From \cite{briani2011gradient} we infer that $K=\{-g'\st g\in H^1_0(I,[-1,1])\}$ and
\begin{align}\label{eq:subdiff_1d_tv}
\partial J(u)=\{-g'\st g\in H^1_0(I,[-1,1]),\;g\equiv\pm 1\text{ on }\supp((Du)_\pm)\}
\end{align}
where $Du=(Du)_+-(Du)_-$ is the Jordan decomposition of $Du$. We start with some results further characterizing $\dom(\partial J)$ and subgradients of minimal norm in this case:
\begin{lemma} 
Let $u \in \dom(\partial J)$ and $p\in\partial J(u)\setminus\{0\}$. Moreover, let $Du = (Du)_+ - (Du)_-$ be the Jordan decomposition of $Du$. Then for each $x_\pm \in\supp((Du)_\pm)$ the estimate
\begin{equation}
\vert x_+ - x_- \vert \geq \frac{4}{ \Vert p \Vert_{L^2}^2} \label{xdifference}
\end{equation}
holds. Moreover, the distance of $x_\pm$ from $\partial  I$ is bounded below by $\frac{1}{ \Vert p \Vert_{L^2}^2}$.
\end{lemma}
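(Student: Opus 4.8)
The plan is to read off everything from the explicit description \eqref{eq:subdiff_1d_tv} of the subdifferential. First I would write $p=-g'$ for some $g\in H^1_0(I,[-1,1])$ satisfying $g\equiv +1$ on $\supp((Du)_+)$ and $g\equiv -1$ on $\supp((Du)_-)$; in particular $\norm{g'}_{L^2}=\norm{p}_{L^2}$, and, being an element of $H^1$, the function $g$ admits an absolutely continuous representative so that the fundamental theorem of calculus applies to it.

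For the first estimate I would fix $x_\pm\in\supp((Du)_\pm)$, so that $g(x_+)=+1$ and $g(x_-)=-1$, and exploit that $g$ must therefore travel a total amount of $2$ between $x_-$ and $x_+$. Absolute continuity together with the Cauchy--Schwarz inequality then gives
\begin{align*}
2=\abs{g(x_+)-g(x_-)}=\abs{\int_{x_-}^{x_+} g'(t)\d t}\leq\norm{g'}_{L^2}\abs{x_+-x_-}^{1/2}=\norm{p}_{L^2}\abs{x_+-x_-}^{1/2}.
\end{align*}
Squaring and rearranging yields $\abs{x_+-x_-}\geq 4/\norm{p}_{L^2}^2$, which is \eqref{xdifference}.

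For the boundary estimate I would use that $g\in H^1_0(I)$ vanishes at the endpoints of $I=(a,b)$. Taking $x_+\in\supp((Du)_+)$ and the nearer endpoint, say $a$, the same computation with $g(a)=0$ and $g(x_+)=+1$ gives $1=\abs{g(x_+)-g(a)}\leq\norm{p}_{L^2}\abs{x_+-a}^{1/2}$, hence $\abs{x_+-a}\geq 1/\norm{p}_{L^2}^2$; the analogous bound holds for the right endpoint and for $x_-$ (where the jump from $0$ to $-1$ again has size $1$), giving the claimed lower bound on $\mathrm{dist}(x_\pm,\partial I)$.

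There is no serious obstacle here: once the characterization \eqref{eq:subdiff_1d_tv} is invoked, the whole argument is a single application of Cauchy--Schwarz to the absolutely continuous representative of $g$. The only points requiring care are to read off the correct values $g=\pm 1$ from the Jordan decomposition and to use the homogeneous boundary condition encoded in $H^1_0$; the hypothesis $p\neq 0$ merely guarantees $\norm{p}_{L^2}>0$ so that the right-hand sides are finite, while the degenerate case of empty supports (i.e.\ constant $u$) renders the statement vacuous.
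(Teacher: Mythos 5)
Your proposal is correct and follows essentially the same route as the paper: write $p=-g'$ via \eqref{eq:subdiff_1d_tv}, use $g(x_\pm)=\pm1$ and Cauchy--Schwarz on $\int_{x_-}^{x_+}g'$ to get \eqref{xdifference}, and repeat the argument with the zero boundary values of $g\in H^1_0(I,[-1,1])$ for the distance to $\partial I$. The only difference is that you spell out the absolute-continuity justification and the boundary computation, which the paper leaves implicit.
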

\begin{proof}
We write $p=-g'$ according to \eqref{eq:subdiff_1d_tv}. Thus, for $x_\pm \in \supp((Du)_\pm)$ we find with the Cauchy-Schwarz inequality
$$ 2= \vert g(x_+) - g(x_-) \vert = \left\vert \int_{x_-}^{x_+} g'(x)\dx \right\vert \leq \sqrt{\vert  x_+ - x_- \vert } ~\Vert p \Vert_{L^2}, $$
which yields \eqref{xdifference}. The estimate for the distance to the boundary follows by an analogous argument noticing that $g$ has zero boundary values.
 \end{proof}

\begin{lemma}
Let $u \in \dom(\partial J)$ and $p=-g'$ be the element of minimal $L^2$-norm in $\partial J(u)$. 
Then $g$ is a piecewise linear spline with zero boundary values and a finite number of kinks where $g$ attains the values $+1$ or $-1$.
\end{lemma}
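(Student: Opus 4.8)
The plan is to characterize the minimal-norm subgradient $p=-g'$ directly as the unique minimizer of the Dirichlet energy over an obstacle-type constraint set, and then to read off the piecewise-linear structure from the associated variational inequality together with a short energy-counting argument. Writing $S_\pm:=\supp((Du)_\pm)$, which are closed and (by the previous lemma, in the nontrivial case $p\neq 0$) separated by a positive distance, the description \eqref{eq:subdiff_1d_tv} shows that minimizing $\Vert p\Vert_{L^2}$ over $p\in\partial J(u)$ is equivalent to minimizing $\tfrac12\int_I|g'|^2\dx$ over the admissible set
$$\mathcal{A}=\{g\in H^1_0(I,[-1,1])\st g\equiv 1\text{ on }S_+,\;g\equiv -1\text{ on }S_-\}.$$
Since the map $g\mapsto -g'$ is injective on $H^1_0(I)$, the Dirichlet energy is strictly convex and coercive on $H^1_0(I)$ (by the Poincaré inequality), and $\mathcal{A}$ is nonempty, closed and convex, there is a unique minimizer $g$, and $p=-g'$ is exactly the minimal-norm element. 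As $H^1_0(I)\hookrightarrow C(\overline I)$ in one dimension, $g$ is continuous, so the pointwise constraints and the contact sets $\{g=\pm1\}$ are meaningful; the zero-boundary-value claim is immediate from $g\in H^1_0(I)$.

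Next I would identify where $g$ is affine. Let $U:=\{x\in I\st -1<g(x)<1\}$, which is open, and note that $S_\pm\subset\{g=\pm1\}$, so $U$ carries none of the pointwise constraints. For any $\phi\in C_c^\infty(U)$ the competitor $g+t\phi$ lies in $\mathcal{A}$ for $|t|$ small: by continuity and compactness $g$ is bounded away from $\pm1$ on $\supp\phi$, and $\phi$ vanishes on $S_\pm$. First-order optimality in both directions then gives $\int_I g'\phi'\dx=0$, hence $g''=0$ in $\mathcal{D}'(U)$ and $g$ is affine on every connected component of $U$, while $g\equiv\pm1$ on the complement. Every junction between two pieces lies in $\partial U$, where by continuity $g\in\{-1,1\}$; this already shows that all kinks occur exactly at the values $\pm1$.

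I would then record the amplitudes. Each component of $U$ not abutting $\partial I$ must join a point with $g=1$ to one with $g=-1$, since equal endpoint values would force $g$ to be constant there, contradicting membership in $U$; hence every such affine piece has amplitude $2$, while the at most two pieces meeting $\partial I$ (where $g=0$) have amplitude $1$. In particular every affine component of $U$ has amplitude $a_i\geq 1$.

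Finally I would bound the number of pieces by an energy estimate, which I expect to be the only genuinely quantitative step. An affine component of length $\ell_i$ and amplitude $a_i\geq 1$ contributes $\int|g'|^2=a_i^2/\ell_i\geq 1/\ell_i$, whereas the flat pieces contribute nothing. If there were infinitely many components, then $\sum_i\ell_i\leq|I|$ would force $\ell_i\to 0$ and hence $\sum_i 1/\ell_i=+\infty$, contradicting $\Vert g'\Vert_{L^2}^2=\Vert p\Vert_{L^2}^2<\infty$. Quantitatively, Cauchy--Schwarz gives $N^2\leq(\sum_i\ell_i)(\sum_i 1/\ell_i)\leq|I|\,\Vert p\Vert_{L^2}^2$, so the number $N$ of affine pieces is at most $\Vert p\Vert_{L^2}\sqrt{|I|}$; the flat pieces merely interleave them and are therefore also finite in number. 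This yields that $g$ is a piecewise-linear spline with finitely many kinks, all located where $g=\pm1$. The main obstacle is precisely excluding the fragmentation of $U$ into infinitely many components, and the amplitude lower bound $a_i\geq 1$ (equivalently the separation supplied by the previous lemma) is exactly what makes the energy sum diverge in that scenario, ruling it out.
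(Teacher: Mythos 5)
Your proof is correct, and it reaches the conclusion by a genuinely different route than the paper, although both arguments rest on the same starting point: via \eqref{eq:subdiff_1d_tv}, the minimal-norm subgradient corresponds to the unique minimizer of the Dirichlet energy over the convex admissible class $\mathcal{A}$. The paper proceeds in the opposite order and with different tools: it first gets finiteness of the transitions of $g$ between $+1$ and $-1$ by citing the separation estimate \eqref{xdifference} from the preceding lemma, and then derives the structure by explicit competitor constructions -- truncating $g$ to the constant $1$ (resp.\ $-1$) between two same-sign contact points with no opposite constraint in between, and replacing $g$ by the linear interpolant between opposite-sign contact points and near $\partial I$ -- using minimality to conclude. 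You instead establish the structure first by the standard obstacle-problem mechanism: first variation with test functions compactly supported in the non-contact set $U=\{|g|<1\}$ gives $g''=0$ in $\mathcal{D}'(U)$, hence affinity on components of $U$ and constancy $\pm 1$ on the complement; you then obtain finiteness by an energy-counting argument (amplitude at least $1$ per component plus Cauchy--Schwarz), which does not rely on the preceding lemma at all and yields the explicit bound $N\leq \Vert p\Vert_{L^2}\sqrt{|I|}$ on the number of affine pieces -- a quantitative refinement the paper does not state. What the paper's route buys is elementarity (no distributional Euler--Lagrange equation, only direct comparison of energies) and economy, since it recycles the separation estimate already proven; what your route buys is a self-contained, systematic argument in which the contact set $\{|g|=1\}$, rather than $\supp(Du)$, organizes the analysis, which makes the interleaving of flat and affine pieces and the location of the kinks at the values $\pm 1$ completely transparent.
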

\begin{proof}
First of all, due to \eqref{xdifference} there can only be a finite number of changes of $g$ between $+1$ and $-1$. 
Let $y,z$ be two points in $I$ such that $g(y)=g(z)=1$ and $(y,z) \cap\supp(Du)_-=\emptyset$. Let 
$$ \tilde g(x) = \begin{cases} \max(g(x),1), & \text{if } x \in (y,z), \\ g(x), & \text{else.} \end{cases}$$
Then $-\tilde g' \in \partial J(u)$ and 
$$ \int_I (\tilde g')^2 \dx- \int_I (g')^2 \dx = - \int_y^z (g')^2\dx \leq 0, $$
with equality if and only if $p=-g'$ vanishes in $(y,z)$. Due to the minimality of $g'$ we find that $g\equiv 1$ on $(y,z)$. By an analogous argument we can show that $g \equiv -1$ on each interval $(y,z)$ such that $g(y)=g(z) =-1$  and $(y,z) \cap\supp((Du)_+)=\emptyset$. Now let $y,z \in I$ be such that $g(y)=1$, $g(z)=-1$ and $(y,z) \cap \supp(Du)=\emptyset$. Then we can define
$$ \tilde g(x) =\begin{cases}  g(y) + \frac{x-y}{z-y} (g(z) - g(y)), & \text{if } x \in (y,z), \\ g(x), & \text{else,} \end{cases} $$
and see again that 
$$ \int_I (\tilde g')^2 \dx \leq \int_I (g')^2 \dx.$$
A similar argument shows that $g$ is piecewise linear close to the boundary of $I$, changing from $0$ to $+1$ or $-1$, which completes the assertion.
\end{proof}

\begin{lemma}
Let $u \in \dom(\partial J)$ and $p$ be the element of minimal $L^2$-norm in $\partial J(u)$. Then $	\langle p, p - q \rangle \geq 0$ for all $q \in K.$
\end{lemma}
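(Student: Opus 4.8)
The plan is to use the explicit piecewise-linear description of $g$ from the preceding lemma to rewrite the inner product $\langle p,p-q\rangle$ as a finite sum over the kinks of $g$, where each term will carry an obvious sign.

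First I would represent both subgradients by their potentials, writing $p=-g'$ and $q=-h'$ with $g,h\in H^1_0(I,[-1,1])$, the latter because $q\in K$. Expanding gives
$$\langle p,p-q\rangle=\int_I (g')^2\dx-\int_I g'h'\dx.$$
By the previous lemma $g$ is a piecewise-linear spline with finitely many kinks $x_1,\dots,x_m$ and zero boundary values, so its distributional second derivative is the atomic measure $g''=\sum_{i=1}^m c_i\,\delta_{x_i}$, where $c_i$ is the jump of the slope of $g$ at $x_i$. Since $g$ and $h$ vanish on $\partial I$ and are continuous (one-dimensional Sobolev embedding), integration by parts kills the boundary terms and yields $\int_I g'h'\dx=-\sum_i c_i\,h(x_i)$ and $\int_I (g')^2\dx=-\sum_i c_i\,g(x_i)$. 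Hence
$$\langle p,p-q\rangle=\sum_{i=1}^m c_i\bigl(h(x_i)-g(x_i)\bigr).$$

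The decisive input is the sign of the jumps $c_i$. By the preceding lemma every kink sits at a value $g(x_i)=s_i\in\{+1,-1\}$, and since $|g|\le1$ on all of $I$, the point $x_i$ is then a global maximum of $g$ (if $s_i=1$) or a global minimum (if $s_i=-1$). For a piecewise-linear function this forces $c_i\le0$ when $s_i=1$ and $c_i\ge0$ when $s_i=-1$, i.e.\ $c_i=-|c_i|\,s_i$ in both cases. Substituting, each summand becomes $c_i(h(x_i)-s_i)=|c_i|\,(1-s_i h(x_i))\ge0$, where I use $|h(x_i)|\le1$. Summing over $i$ gives $\langle p,p-q\rangle\ge0$; the case $p=0$ is trivial.

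I expect the main obstacle to be the careful justification of the integration by parts against the atomic measure $g''$ together with the precise sign statement for the slope jumps $c_i$. Both rest entirely on the structural lemma guaranteeing that $g$ is piecewise linear and that \emph{all} of its kinks occur exactly at the extreme values $\pm1$; once this is granted, the remainder is a direct computation combined with the global-extremum observation. Note that, via Proposition~\ref{prop:characterization_eigenvectors}, this lemma exactly says that the minimal-norm subgradient is an eigenvector, which is what ultimately drives the spectral decomposition for the 1D total variation flow.
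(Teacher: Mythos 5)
Your proof is correct, and it takes a genuinely different (though closely related) route from the paper's. Both arguments start from the same structural lemma --- write $p=-g'$, $q=-h'$ with $g$ a piecewise linear spline all of whose kinks sit at height $\pm1$ --- and reduce the claim to a finite sum of manifestly nonnegative terms, but the decompositions differ; they are in fact Abel-summation duals of one another. The paper splits the integral over the intervals $(x_i,x_{i+1})$ between consecutive kinks, evaluates each piece as $\frac{g(x_{i+1})-g(x_i)}{x_{i+1}-x_i}\bigl(g(x_{i+1})-g(x_i)-h(x_{i+1})+h(x_i)\bigr)$, and gets nonnegativity from $\vert g(x_{i+1})-g(x_i)\vert\in\{0,2\}$ together with $\Vert h\Vert_\infty\leq1$, with the two boundary intervals requiring a separate (sketched) argument. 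You instead sum by parts onto the kinks, writing $\langle p,p-q\rangle=\sum_i c_i\,(h(x_i)-g(x_i))$ with $g''=\sum_i c_i\delta_{x_i}$, and extract the sign of each term from the observation that a kink at height $\pm1$ is automatically a global extremum of $g$, forcing $c_i$ and $g(x_i)$ to have opposite signs --- a fact the paper never invokes. What your version buys: the boundary is handled automatically (the boundary terms in the integration by parts vanish because $g$ and $h$ have zero traces), and the computation makes transparent that $\Vert p\Vert^2=\sum_i\vert c_i\vert=J(p)$, which via Proposition~\ref{prop:characterization_eigenvectors} is exactly the eigenvector property this lemma feeds into. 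What the paper's version buys: it never needs the distributional second derivative or the sign analysis of slope jumps, staying entirely with elementary integrals of piecewise constant functions. The one step you flag as delicate --- integrating by parts against the atomic measure $g''$ --- is legitimate and routine here: for a piecewise linear $g$ it amounts to Abel summation of $\sum_i k_i\bigl(h(x_{i+1})-h(x_i)\bigr)$ over the slopes $k_i$, so there is no genuine gap.
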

\begin{proof}
Let the jump set of $p = - g'$ be $\{x_i\}_{i=1}^N$ in ascending order. Then $g \in H_0^1(I,[-1,1])$ satisfies either $g(x)=1$ or $g(x)= -1$ in $x=x_i$ and is piecewise linear  in between. Denoting by $x_0$ and $x_{N+1}$ the boundary points of $I$ we can use $g(x_0)=g(x_{N+1}) =0$ and have $g$ as the piecewise linear spline between all the $x_i$. Now we can write any $q \in \partial J(0)$ as $q=-h'$ for some $h \in H_0^1(I,[-1,1])$. Thus, we have
$$ \langle p, p - q \rangle = \int_I g'(g'-h')\dx  = \sum_{i=0}^N \int_{x_i}^{x_{i+1}} g' (g'-h')\dx.$$
Since $g'$ is piecewise constant we find
$$ \int_{x_i}^{x_{i+1}} g' (g'-h')\dx = \frac{g(x_{i+1})-g(x_i)}{x_{i+1} - x_i} (g(x_{i+1})-g(x_i)-h(x_{i+1})+h(x_i)) $$ 
If $g(x_{i+1})=g(x_i)$, then the integral vanishes. If $0<i<N$ and $g(x_{i+1})\neq g(x_i)$, then
\begin{eqnarray*}\int_{x_i}^{x_{i+1}} g' (g'-h')~dx &=&\frac{4}{x_{i+1} - x_i} - \frac{g(x_{i+1})-g(x_i)}{x_{i+1} - x_i}(h(x_{i+1})-h(x_i)) \\ &\geq& \frac{4 - 2 \vert h(x_{i+1})-h(x_i) \vert}{x_{i+1} - x_i} \\ &\geq& 4\frac{1-  \Vert h \Vert_\infty}{x_{i+1} - x_i} \geq 0. \end{eqnarray*}
A similar argument for the boundary terms $i=0$ and $i=N$ finally yields
$$ \langle p, p - q \rangle \geq 0. $$
\end{proof}

\begin{rem}
Analogously, similar computations can be made in case of an unbounded interval $I$ or the case $I=\R$ where the subgradients are given by $p=-g'$ where $|g(x)|\leq 1$ and $g'\in L^2(\R)$.
\end{rem}

\begin{thm}[One-dimensional total variation flow]
Let $J$ be given by the one-dimensional total variation on an interval $I\subset\R$. Then the gradient flow $\partial_t u(t) = - p(t),\;p(t) \in \partial J(u(t))$ with $u(0)=f$
yields a sequence of eigenvectors $p(t)$ for $t>0$.
\end{thm}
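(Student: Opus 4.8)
The plan is simply to assemble the machinery already established: the entire substance of the argument lives in the three preceding lemmas, and the theorem itself is a one-line corollary. The three ingredients I would invoke are, first, the Brezis existence theory (Theorem~\ref{thm:brezis2}), which guarantees that the gradient flow has a solution with $u(t)\in\dom(\partial J)$ for all $t>0$ and, crucially, that the selected subgradient $p(t)=-\partial_t^+ u(t)$ is precisely the element of minimal $L^2$-norm in $\partial J(u(t))$; second, the immediately preceding lemma, which shows that for the one-dimensional total variation any minimal-norm subgradient $p$ satisfies $\langle p,p-q\rangle\geq 0$ for all $q\in K$; and third, Proposition~\ref{prop:characterization_eigenvectors}, which says that this inequality is exactly equivalent to $p$ being an eigenvector with eigenvalue $1$.

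The steps, in order, are as follows. I would fix an arbitrary $t>0$. By item~1 of Theorem~\ref{thm:brezis2} we have $u(t)\in\dom(\partial J)$, so the preceding lemma applies with $u:=u(t)$; combined with items~3 and~4 of Theorem~\ref{thm:brezis2}, which identify $p(t)$ as the minimal-norm element of $\partial J(u(t))$, this yields $\langle p(t),p(t)-q\rangle\geq 0$ for every $q\in K$. Proposition~\ref{prop:characterization_eigenvectors} then immediately gives that $p(t)$ is an eigenvector with eigenvalue $1$, i.e.\ $p(t)\in\partial J(p(t))$. Since $t>0$ was arbitrary, the gradient flow generates a whole family of eigenvectors, which is the claim. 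Equivalently, one may phrase the conclusion through the named equivalence theorem stating that the flow yields eigenvectors if and only if $\langle p(t),p(t)-q\rangle\geq 0$ holds for all $q\in K$ and all $t>0$; the preceding lemma verifies exactly its right-hand side.

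There is essentially no obstacle at this stage: all the difficulty has been discharged in the lemmas, namely the structural characterization of the minimal-norm subgradient $p=-g'$ as a piecewise linear spline with kinks at values $\pm 1$, together with the telescoping computation over the intervals $(x_i,x_{i+1})$ that produces the sign. The only points requiring a word of care in the write-up are that $u(t)$ genuinely lies in $\dom(\partial J)$ so that the lemma is applicable (supplied by Theorem~\ref{thm:brezis2}), and that the distinguished subgradient $p(t)$ furnished by the flow is the \emph{minimal-norm} one rather than an arbitrary element of $\partial J(u(t))$ (again supplied by Theorem~\ref{thm:brezis2}). The degenerate case $p(t)=0$ is harmless, since the zero vector trivially satisfies the characterizing inequality and is an eigenvector with any eigenvalue. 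Hence the theorem follows by direct combination of the cited results, with no further computation needed.
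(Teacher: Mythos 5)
Your proposal is correct and coincides with the paper's own (implicit) argument: the theorem is stated there as a direct consequence of the preceding lemma $\langle p,p-q\rangle\geq 0$ for minimal-norm subgradients, combined with Theorem~\ref{thm:brezis2} (the flow selects minimal-norm subgradients) and Proposition~\ref{prop:characterization_eigenvectors}, exactly as you assemble them. Nothing further is needed.
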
 

\begin{rem}[Geometric structure of eigenvectors]
From the characterization of the sub-differential \eqref{eq:subdiff_1d_tv} of the one-dimensional total variation, we conclude that $p$ is an eigenvector if and only if $p=-g'$ where $g\equiv\pm 1$ on $\supp((-g'')_\pm)$. In particular, this implies that $p$ is a step function which jumps at the kinks of $g$.
\end{rem}

\subsection{Divergence flow}

Let $n{\geq 2}$ and $\Omega\subset\R^n$ be an open and bounded set. We consider the gradient flow
\begin{align}\label{eq:grad_flow_div}
\partial_t u=-p,\;p\in\partial J(u),
\end{align}
where 
\begin{align}\label{eq:divergence}
J(u):=\int_\Omega|\div u|:=\sup\left\lbrace-\int_\Omega u\cdot\nabla v\dx\st v\in\testSpaceV\right\rbrace,\;u\in L^2(\Omega,\R^n),
\end{align}
and $u(0)\in L^2(\Omega,\R^n)$. It holds that $u\in\dom(J)$, i.e., $J(u)<\infty$, if and only if the distribution $\div u$ can be represented as a finite Radon measure (cf.~\cite{briani2011gradient}). The null-space $\calN(J)$ of $J$ is infinite dimensional since it contains all vector fields of the type $\nabla v+w$ where $v$ is an harmonic function and $w$ is a divergence-free vector field. Note that $J$ can be cast into the canonical form $J=\chi_K^*$ by setting
\begin{align}\label{eq:set_K_div}
K:=\left\lbrace-\nabla v\st v\in\spaceV\right\rbrace.
\end{align}
Hence, a subgradient $p$ of $J$ in $u$ fulfills $p\in K$ and $\int_\Omega p\cdot u=\int_\Omega|\div u|\dx$. To understand the meaning of $v$ in \eqref{eq:set_K_div}, we perform a integration by parts for smooth $u$ to obtain
$$\int_\Omega p\cdot u\dx=-\int_\Omega\nabla v\cdot u\dx=\int_\Omega v\,\div u\dx.$$
Therefore, $v$ should be chosen as 
\begin{align}\label{eq:v_smooth}
v(x)\in 
\begin{cases}
\{1\},\quad&\div u(x)>0,\\
\{-1\},\quad&\div u(x)<0,\\
[-1,1],\quad&\div u(x)=0,
\end{cases}\quad x\in\Omega.
\end{align}
In the general case, one considers the polar decomposition of the measure $\div u$, given by $\div u=\theta_u\,|\div u|,$ where $\theta_u(x):=\frac{\div u}{|\div u|}(x)$ denotes the Radon-Nikodym derivative, which exists $|\div u|$-almost everywhere and has values in $\{-1,1\}$. This allows us to define the sets
\begin{align}
\E_u^\pm:=\{x\in\Omega\st\theta_u(x)\text{ exists and }\theta_u(x)=\pm 1\}.
\end{align}
Note that trivially it holds $\E_u^+\cup\E_u^-\subset\supp(\div u)$  and one can easily prove that $\supp(\div u)=\overline{\E_u^+\cup\E_u^-}$. In \cite{briani2011gradient} the authors showed that, in full analogy to \eqref{eq:v_smooth}, the subdifferential of $J$ in $u\in\dom(J)$ can be characterized as
\begin{align}\label{eq:subdiff}
\partial J(u)=\left\{-\nabla v\st v\in\spaceV,\,v=\pm1\;|\div u|\text{-a.e. on }\E_u^\pm\right\}.
\end{align}
If $p=-\nabla v$ is a subgradient of $J$ in $u$ we refer to $v$ as a \emph{calibration} of $u$.

\begin{rem}
Since $v\in H^1_0(\Omega,[-1,1])$ can be defined pointwise everywhere except from a set with zero $H^1$-capacity and the measure $\div u$ vanishes on such sets (cf.\cite[Ch.~6]{adams2012function} and \cite{chen2009gauss}, respectively), the pointwise definition of $v$ in~\eqref{eq:subdiff} makes sense.
\end{rem}

Since the gradient flow selects the subgradients with minimal norm, we are specifically interested
in
\begin{align*}
p^0_u=\argmin\left\{\int_\Omega|p|^2\dx\st p\in\partial J(u)\right\}.
\end{align*}
Using \eqref{eq:subdiff} one has
\begin{align}\label{eq:minimal_subgradient}
p^0_u=-\nabla\argmin\left\{\int_\Omega|\nabla v|^2\dx\st v\in\spaceV,\,v=\pm1\;|\div u|\text{-a.e. on }\E_u^\pm\right\},
\end{align}
which implies that $p^0_u=-\nabla v$ where $v$ minimizes the Dirichlet energy and, in particular, is harmonic on the open set $\Omega_H:=\Omega\setminus(\overline{\E_u^+\cup\E_u^-})$. 

Now we show that the gradient flow \eqref{eq:grad_flow_div} indeed generates eigenvectors. We have the following  

\begin{prop}\label{prop:eigenvectors_div}
Let $u\in\dom(\partial J)$ and $p=p_u^0$. Then $p\in\partial J(p)$.
\end{prop}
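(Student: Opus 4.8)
The plan is to invoke the geometric characterization of Proposition~\ref{prop:characterization_eigenvectors}. Since $p=p_u^0=-\nabla v$ with $v\in\spaceV$ already lies in $K$, it suffices to verify $\langle p,p-q\rangle\geq0$ for every $q\in K$. Writing an arbitrary $q\in K$ as $q=-\nabla w$ with $w\in\spaceV$, and recalling from \eqref{eq:minimal_subgradient} that $v$ minimises the Dirichlet energy over the admissible calibrations, the claim reduces to
\[
\langle p,p-q\rangle=\int_\Omega\nabla v\cdot\nabla(v-w)\dx\geq0\qquad\text{for all }w\in\spaceV.
\]

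First I would analyse the structure of the minimiser $v$. The maximum principle keeps the bound $|v|\le1$ inactive except where it is forced, so on the open set $F:=\{|v|<1\}$ two-sided interior perturbations are admissible and show that $v$ is harmonic there; here one uses that the constraint $v=\pm1$ on $\E_u^\pm$ only lives on $\{|v|=1\}$ and that $|\div u|$ charges no set of zero capacity. Combined with the standard fact that $\nabla v=0$ a.e.\ on the level set $\{|v|=1\}$, this discards the contribution of $\{|v|=1\}$ and rewrites the quantity of interest as $\int_F\nabla v\cdot\nabla(v-w)\dx$; it also identifies $\mu:=-\Delta v=\div p$ as a measure concentrated on $\{|v|=1\}$.

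The decisive step is a sign analysis. Pairing the measure $\mu$ with $v-w\in\spaceV$ (equivalently, integrating by parts against the harmonic $v$) gives
\[
\int_\Omega\nabla v\cdot\nabla(v-w)\dx=\int_\Omega(v-w)\d\mu=\int_{\{v=1\}}(1-w)\d\mu^++\int_{\{v=-1\}}(1+w)\d\mu^-,
\]
where $\mu=\mu^+-\mu^-$ is the Jordan decomposition. Because $\pm1$ are the extreme admissible values, the maximum principle forces $\mu^+$ onto the contact set $\{v=1\}$ and $\mu^-$ onto $\{v=-1\}$; since $1-w\geq0$ and $1+w\geq0$ by $w\in\spaceV$, both integrals are nonnegative. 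This is the exact higher-dimensional counterpart of the one-dimensional bookkeeping, where $\partial_\nu v\geq0$ on $\{v=1\}$ meets $1-w\geq0$ and the mirror situation holds on $\{v=-1\}$.

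I expect the main obstacle to be precisely this last point: showing that $\mu=-\Delta v$ is a finite Radon measure whose positive and negative parts sit on $\{v=1\}$ and $\{v=-1\}$ respectively, i.e.\ that the integration by parts across the a priori irregular free boundary $\partial F$ yields only correctly-signed terms. In one dimension $\{|v|=1\}$ is a finite set and this is transparent, whereas for $n\ge2$ it requires the regularity and maximum-principle analysis of the calibration $v$ from \cite{briani2011gradient} to confine $\mu^\pm$ to the respective level sets.
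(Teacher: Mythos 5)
Your reduction to the inequality $\int_\Omega\nabla v\cdot\nabla(v-w)\dx\geq 0$ for all $w\in\spaceV$ is exactly the paper's starting point, and your skeleton would work \emph{if} its central claim were supplied; but it is not. You assert, rather than prove, that $\mu:=-\Delta v$ is a Radon measure whose positive part is concentrated on $\{v=1\}$ and whose negative part on $\{v=-1\}$, and you yourself flag this as the main obstacle. This is a genuine gap, and the one-line justification you offer (``because $\pm1$ are the extreme admissible values, the maximum principle forces\dots'') is not only incomplete but points in a misleading direction: for an obstacle problem the naive sign rule gives the \emph{opposite} sign on free contact sets --- where $v$ rests against the upper obstacle $+1$ without being pinned, downward perturbations are admissible and yield $-\Delta v\leq 0$ there, not $\geq 0$. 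The claimed structure is in fact true, but for a subtler reason: because the obstacles $\pm1$ are constants (hence harmonic), the free contact sets carry no measure at all, so all the mass of $\mu$ sits on the pinning sets $\E_u^\pm$, where pinning at the extreme value $+1$ (resp.\ $-1$) can only act as a constraint from below (resp.\ above), giving $\mu\geq 0$ (resp.\ $\mu\leq 0$) there. Establishing this --- that $\mu$ is a measure at all, that free contact is $\mu$-null, and the sign at the pinning sets, where the constraint is only imposed $|\div u|$-a.e.\ on a possibly very irregular set --- is precisely the obstacle-problem regularity analysis you defer to \cite{briani2011gradient}; but the facts quoted from that reference, namely \eqref{eq:subdiff} and the form of $K$, give only the characterization of the subdifferential, not this measure-theoretic structure, so the key lemma of your argument is nowhere proved. (Minor additional issues: $\{|v|<1\}$ is only quasi-open, not open, and the pairing $\int_\Omega\nabla v\cdot\nabla(v-w)\dx=\int_\Omega(v-w)\d\mu$ requires quasi-continuous representatives; both are repairable once $\mu$ is known to be a measure in $H^{-1}$.)

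The paper's proof is engineered exactly to avoid this free-boundary analysis. Instead of identifying $-\Delta v$ as a signed measure, it approximates $v$ by $v_\eps=\phi_\eps\circ v$ (Lemma~\ref{lem:approx}) and exploits that $\psi_\eps(v)z$ is an admissible variation for the constrained Dirichlet minimization \eqref{eq:minimal_subgradient}, because $\psi_\eps(v)$ vanishes where $|v|=1$ and hence on the pinning sets $\E_u^\pm$. The vanishing first variation then reduces everything to the boundary-layer term $-\int_\Omega\psi_\eps'(v)(v-w)|\nabla v|^2\dx$, whose sign follows from the elementary pointwise bounds $|v|\leq 1$, $|w|\leq 1$ on $\{1-\eps\leq|v|\leq1\}$, and which converges to $0$ since $\nabla v=0$ a.e.\ on $\{|v|=1\}$. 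No regularity of the contact sets and no measure structure of $\Delta v$ is ever needed. To complete your route you would essentially have to redo double-obstacle regularity theory with the rough constraint $v=\pm1$ $|\div u|$-a.e.\ on $\E_u^\pm$, which is substantially harder than the paper's cutoff argument.
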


To prove the proposition, by means of Proposition~\ref{prop:characterization_eigenvectors} it  suffices to check that $\langle p,p-q\rangle\geq0$ for all $q\in K$. We start with an approximation Lemma.

\begin{lemma}\label{lem:approx}
Let $0<\eps<1$ and define
\begin{align}
\psi_\eps(t)=
\begin{cases}
0,&|t|>1,\\
1,&|t|<1-\eps,\\
\frac{1}{\eps}(1-|t|),&1-\eps\leq|t|\leq1,
\end{cases}
\qquad\phi_\eps(t)=\int_0^t\psi_\eps(\tau)\d\tau.
\end{align}
If $v\in H^1_0(\Omega,[-1,1])$ then $v_\eps:=\phi_\eps\circ v\in H^1_0(\Omega,[-1,1])$ and $v_\eps\to v$ strongly in $H^1$ as $\eps\searrow0$. 
\end{lemma}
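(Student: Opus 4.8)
The plan is to prove the two assertions separately, using throughout the elementary properties of $\phi_\eps$. First note that $\phi_\eps$ is $C^1$ with $\phi_\eps'=\psi_\eps$, is odd and nondecreasing, and satisfies $\abs{\phi_\eps'}=\abs{\psi_\eps}\le 1$; thus $\phi_\eps$ is globally $1$-Lipschitz and $\phi_\eps(0)=0$. A direct integration gives $\phi_\eps(\pm 1)=\pm(1-\eps/2)$ and $\phi_\eps(t)=1-\eps/2-(1-t)^2/(2\eps)$ for $1-\eps\le t\le 1$, so $\phi_\eps$ maps all of $\R$ into $[-(1-\eps/2),1-\eps/2]\subset[-1,1]$ and obeys the uniform estimate $\abs{\phi_\eps(t)-t}\le\eps/2$ for every $t\in[-1,1]$ (the difference vanishes for $\abs{t}\le 1-\eps$ and is monotone on the remaining interval).

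For the membership $v_\eps\in\spaceV$ the range constraint is immediate, since $\phi_\eps$ maps $\R$ into $[-1,1]$. As $\phi_\eps$ is Lipschitz (indeed $C^1$), the chain rule for Sobolev functions gives $v_\eps\in H^1(\Omega)$ with $\nabla v_\eps=\psi_\eps(v)\,\nabla v$ almost everywhere. To obtain the zero boundary values I would approximate $v$ by functions $v_k\in C^\infty_c(\Omega)$ with $v_k\to v$ in $H^1$: because $\phi_\eps(0)=0$, each $\phi_\eps\circ v_k$ is Lipschitz with compact support and therefore lies in $\spaceV$, while the chain rule together with the continuity and boundedness of $\psi_\eps$ and dominated convergence yields $\phi_\eps\circ v_k\to v_\eps$ in $H^1$; closedness of $\spaceV$ then gives $v_\eps\in\spaceV$.

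The convergence splits into an $L^2$ part and a gradient part. The $L^2$ part follows at once from the uniform estimate of the first paragraph, giving $\lnorm{v_\eps-v}{2}\le(\eps/2)\abs{\Omega}^{1/2}\to 0$ as $\Omega$ is bounded. For the gradient part I would write $\nabla v_\eps-\nabla v=(\psi_\eps(v)-1)\,\nabla v$, which is dominated by $\abs{\nabla v}\in L^2$, and argue that the integrand tends to $0$ almost everywhere. On $\{\abs{v}<1\}$ this is clear, since $\psi_\eps(v(x))=1$ once $\eps<1-\abs{v(x)}$. The main obstacle is the level set $\{\abs{v}=1\}$, where $\psi_\eps(v)\equiv 0$ does \emph{not} converge to $1$; this is precisely where the cutoff is designed to act. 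The remedy is the classical fact (Stampacchia) that $\nabla v=0$ almost everywhere on any level set $\{v=c\}$ of an $H^1$ function, so on $\{\abs{v}=1\}$ the product $(\psi_\eps(v)-1)\,\nabla v$ vanishes identically regardless of $\psi_\eps$. Hence the integrand converges to $0$ a.e., dominated convergence gives $\nabla v_\eps\to\nabla v$ in $L^2$, and together with the $L^2$ part this yields $v_\eps\to v$ strongly in $H^1$.
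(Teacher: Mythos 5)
Your proof is correct and follows essentially the same route as the paper: the chain rule gives $\nabla v_\eps=\psi_\eps(v)\nabla v$, and the gradient convergence reduces to the fact that $\nabla v$ vanishes a.e.\ on the level set $\{|v|=1\}$, combined with dominated convergence. The only differences are cosmetic: you make the Stampacchia level-set theorem explicit (the paper uses it silently in the final equality $\int_{\{|v|=1\}}|\nabla v|^2\dx=0$) and you spell out the $L^2$ part and the $H^1_0$ boundary-value argument, which the paper dispatches by citing the Lipschitz--Sobolev chain rule and the equivalence of norms on $H^1_0$.
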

\begin{proof}
The membership to $H^1_0(\Omega,[-1,1])$ follows directly from the chain rule for compositions of Lipschitz and Sobolev functions. For strong convergence it suffices to show $\nabla v_\eps\to\nabla v$ in $L^2(\Omega)$. Using $\nabla v_\eps=\psi_\eps(v)\nabla v$ this follows from
\begin{align*}
\norm{\nabla v_\eps-\nabla v}_{L^2}^2=&\norm{(\psi_\eps(v)-1)\nabla v}_{L^2}^2=\int_{\{1-\eps\leq|v|\leq 1\}}\underbrace{\left[\frac{1-|v(x)|}{\eps}\right]^2}_{\leq1}|\nabla v|^2\dx\\
\leq&\int_{\{1-\eps\leq|v|\leq1\}}|\nabla v|^2\dx\to\int_{\{|v|=1\}}|\nabla v|^2\dx=0,\quad\eps\searrow0.
\end{align*}
\end{proof}

\begin{proof}[Proof of Proposition \ref{prop:eigenvectors_div}]
We write $p=p_u^0=-\nabla v$ with $v$ as in \eqref{eq:minimal_subgradient} and define $v_\eps$ as in the previous Lemma. Any $q\in K$ can be written as $q=-\nabla w$ with $w\in H^1_0(\Omega,[-1,1])$ and thanks to Lemma~\ref{lem:approx} we have
\begin{align}\label{eq:proof1}
\langle p,p-q\rangle=\int_\Omega\nabla v\cdot\nabla(v-w)\dx=\lim_{\eps\searrow0}\int_\Omega\nabla v_\eps\cdot\nabla(v-w)\dx.
\end{align}
As a first step, we replace $v-w$ by a smooth $z$ with compact support, to obtain with the product rule:
\begin{align*}
\int_\Omega\nabla v_\eps\cdot\nabla z\dx=&\int_\Omega\psi_\eps(v)\nabla v\cdot\nabla z\dx\\
=&\int_\Omega\nabla v\cdot(\nabla[\psi_\eps(v)z]-\psi_\eps'(v)z\nabla v)\dx\\
=&-\int_\Omega\psi_\eps'(v)z|\nabla v|^2\dx
\end{align*}
For the last equality we used that $\psi_\eps(v)z$ is a test function for the minimization of the Dirichlet energy in \eqref{eq:minimal_subgradient} and that the first variation of the Dirichlet integral in direction of the test function vanishes consequently.

Strongly approximating $v-w$ by smooth and compactly supported functions in $H^1_0(\Omega)$ and using above-noted calculation, results in
\begin{align}\label{eq:proof2}
\int_\Omega\nabla v_\eps\cdot\nabla(v-w)\dx=-\int_\Omega\psi_\eps'(v)(v-w)|\nabla v|^2\dx.
\end{align}
Note that
$$\psi_\eps'(t)=\frac{1}{\eps}
\begin{cases}
1,&t\in[-1,-1+\eps]\\
-1,&t\in[1-\eps,1],\\
0,&$else$,
\end{cases}
$$
which we can use to calculate
\begin{align}\label{eq:proof3}
-&\int_\Omega\psi_\eps'(v)(v-w)|\nabla v|^2\dx\notag\\
=&\int_{\{-1\leq v\leq-1+\eps\}}\frac{1}{\eps}(w-v)|\nabla v|^2\dx+\int_{\{1-\eps\leq v\leq1\}}\frac{1}{\eps}(v-w)|\nabla v|^2\dx\notag\\
\geq&-\int_{\{1-\eps\leq|v|\leq 1\}}|\nabla v|^2\dx\notag\\
\to&-\int_{\{|v|=1\}}|\nabla v|^2\dx=0,\quad\eps\searrow0.
\end{align}
Putting \eqref{eq:proof1}--\eqref{eq:proof3} together, we have shown that 
$$\langle p,p-q\rangle=\lim_{\eps\searrow 0}\int_\Omega\nabla v_\eps\cdot\nabla(v-w)\dx=\lim_{\eps\searrow 0}-\int_\Omega\psi_\eps'(v)(v-w)|\nabla v|^2\dx\geq 0,$$
as desired.
\end{proof}

\begin{thm}[Divergence flow]\label{thm:eigenvectors_div-flow}
Let $J$ be given by \eqref{eq:divergence}. Then gradient flow $\partial_tu(t)=-p(t),\;p(t)\in\partial J(u(t))$ with $u(0)=f$ generates a sequence of eigenvectors $p(t)$ for $t>0$.
\end{thm}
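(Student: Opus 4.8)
The plan is to obtain this statement as a direct consequence of the preceding Proposition~\ref{prop:eigenvectors_div} combined with the abstract gradient flow theory recalled in Theorem~\ref{thm:brezis2}. The key preliminary observation is that the divergence functional $J$ from \eqref{eq:divergence} belongs to the class $\calC$: it is convex, lower semicontinuous on $L^2(\Omega,\R^n)$, and absolutely one-homogeneous, with subdifferential at zero given by the set $K$ in \eqref{eq:set_K_div}. Hence Theorem~\ref{thm:brezis2} applies to $A=\partial J$ and, for $f\in\overline{\dom(\partial J)}$, yields a unique solution $u:[0,\infty)\to\H$ of the gradient flow \eqref{eq:gradient_flow} whose right time derivative obeys $p(t)=-\partial_t^+u(t)=A^0u(t)$ for every $t>0$; that is, $p(t)$ is exactly the subgradient of minimal norm in $\partial J(u(t))$.

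Concretely, I would fix $t>0$ and identify $p(t)$ with the minimal-norm subgradient $p^0_{u(t)}$ described in \eqref{eq:minimal_subgradient}. The only point requiring a word of care is that Brezis' selection $A^0u(t)$ genuinely coincides with the variational object \eqref{eq:minimal_subgradient}; this is immediate, since both are by definition the unique element of least $L^2$-norm in the closed convex set $\partial J(u(t))$. I would then invoke Proposition~\ref{prop:eigenvectors_div}, applied with $u:=u(t)$, which states precisely that this minimal-norm subgradient is an eigenvector with eigenvalue one, i.e. $p^0_{u(t)}\in\partial J(p^0_{u(t)})$. As this holds for every $t>0$, the flow produces a curve of eigenvectors, which is the assertion. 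Equivalently, the last step can be phrased through the geometric criterion of Proposition~\ref{prop:characterization_eigenvectors}: the inequality $\langle p(t),p(t)-q\rangle\geq0$ for all $q\in K$, established inside the proof of Proposition~\ref{prop:eigenvectors_div}, is exactly the characterization of $p(t)$ as an eigenvector.

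The genuine substance of the argument has therefore already been discharged in Proposition~\ref{prop:eigenvectors_div}, and I expect no further obstacle at the level of the theorem itself. The hard part, now behind us, was that proposition: it relies on the explicit characterization \eqref{eq:subdiff} of the subdifferential, on the identification \eqref{eq:minimal_subgradient} of the minimal-norm subgradient with the gradient of a calibration $v$ that is harmonic off the support of $\div u$, and on the delicate truncation via the approximations $\psi_\eps,\phi_\eps$ of Lemma~\ref{lem:approx}, which permits passing to the limit and exploiting the vanishing of $|\nabla v|^2$ on the level set $|v|\to1$. The theorem is thus best presented as a short corollary: recall the minimal-norm selection of the flow, apply Proposition~\ref{prop:eigenvectors_div} pointwise in $t$, and conclude.
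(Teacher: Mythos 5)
Your proposal is correct and matches the paper's (implicit) argument exactly: the paper states the theorem as an immediate consequence of Proposition~\ref{prop:eigenvectors_div}, since by Theorem~\ref{thm:brezis2} the gradient flow selects at each $t>0$ the subgradient of minimal norm, which that proposition shows to be an eigenvector. Your additional remark that Brezis' selection $A^0u(t)$ coincides with the variational object \eqref{eq:minimal_subgradient} is the right (and only) point of glue, and it holds for precisely the reason you give.
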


\begin{rem}[Geometric structure of eigenvectors]
From \eqref{eq:subdiff} we infer that $p\in\partial J(p)$ if and only if $p=-\nabla v$ where $v\in H^1_0(\Omega,[-1,1])$ and $v=\pm1$ on $(\div u)_\pm=(-\Delta v)_\pm$. Note that $-\Delta v$ is a finite Radon measure in this situation whose support cannot contain an open set since otherwise $v$ would be locally constant there and hence have zero Laplacian. Therefore, eigenvectors are gradients of functions $v$ which are harmonic everywhere on $\Omega\setminus\Gamma$ where $\Gamma:=\supp(-\Delta v)$ is a closed exceptional set, not containing any open set.
\end{rem}
\subsection{Rotation flow}
\label{sec:rot}
Now we fix the dimension $n=2$ and consider the functional
\begin{align}\label{eq:rotation}
\tilde{J}(u):=\int_\Omega|\rot u|:=\sup\left\lbrace\int_\Omega u\cdot \nabla^\perp v\dx\st v\in C_c^\infty(\Omega,[-1,1])\right\rbrace,\;u\in L^2(\Omega,\R^n),
\end{align}
where formally $\rot u=\partial_1u_2-\partial_2u_1$ and $\nabla^\perp=(\partial_2,-\partial_1)^T$. Defining the rotation matrix $R=\begin{pmatrix}
0 & 1 \\ 
-1 & 0
\end{pmatrix}$ which fulfills $\nabla^\perp=R\nabla$ it holds $\tilde{J}(u)=J(Ru)$, where $J$ is given by \eqref{eq:divergence}. As before, $\tilde{J}$ can be expressed by duality as $\tilde{J}=\chi_{\tilde{K}}^*$ where $\tilde{K}:=\{\nabla^\perp v\st v\in H^1_0(\Omega,[-1,1])\}$ and it holds $\tilde{K}=RK$. Due to the invertibility of $R$, the gradient flows with respect to $J$ and $\tilde{J}$ are fully equivalent and the respective solutions are connected via the rotation $R$. In particular, the results from the previous section directly generalize to $\tilde{J}$, meaning that the rotation flow $\partial_tu=-p,\;p\in\partial\tilde{J}(u)$ also generates eigenfunctions $p\in\partial\tilde{J}(p)$.
\begin{thm}[Rotation flow]\label{thm:eigenvectors_rot-flow}
Let $\tilde{J}$ be given by \eqref{eq:rotation}. Then the gradient flow $\partial_tu(t)=-p(t),\;p(t)\in\partial\tilde{J}(u(t))$ with $u(0)=f$ generates a sequence of eigenvectors $p(t)$ for $t>0$.
\end{thm}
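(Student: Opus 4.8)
The plan is to deduce the statement directly from the already-established divergence flow result, Theorem~\ref{thm:eigenvectors_div-flow}, by exploiting that $\tilde J=J\circ R$ for the orthogonal rotation matrix $R$, and that $R$ acts pointwise on $\H=L^2(\Omega,\R^2)$ as an \emph{isometry} (since $R$ is an orthogonal $2\times 2$ matrix, $\langle Ru,Rv\rangle=\langle u,v\rangle$ and $\norm{Ru}=\norm{u}$ for all $u,v$). The whole argument is then a conjugation by $R$ of the divergence flow, and the key is to track how subdifferentials, minimal-norm selection, and the eigenvector relation behave under this change of variables.

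The main technical ingredient I would establish first is the subdifferential transformation rule
\[
p\in\partial\tilde J(u)\quad\Longleftrightarrow\quad Rp\in\partial J(Ru).
\]
This follows straight from the definition of the subdifferential together with $\tilde J(v)=J(Rv)$: substituting $w=Rv$ in the defining conditions $\langle p,v\rangle\leq\tilde J(v)$ for all $v$ and $\langle p,u\rangle=\tilde J(u)$, and using $\langle p,v\rangle=\langle Rp,Rv\rangle$, turns them verbatim into the defining conditions for $Rp\in\partial J(Ru)$. Because $R$ is a bijection of $\H$ preserving the $L^2$-norm, it moreover maps the norm-minimal element of $\partial\tilde J(u)$ to the norm-minimal element of $\partial J(Ru)$, so minimal-norm selection commutes with $R$.

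Next I would transfer the flow itself. If $(u,p)$ solves the rotation flow with datum $f$, then applying $R$ gives $\partial_t(Ru)=-Rp$, while the transformation rule yields $Rp\in\partial J(Ru)$ with $Rp$ of minimal norm, and $Ru(0)=Rf$. Hence $(Ru,Rp)$ is exactly the divergence gradient flow of \eqref{eq:divergence} with datum $Rf$; by the uniqueness part of Theorem~\ref{thm:brezis2} it \emph{is} the divergence flow solution. Theorem~\ref{thm:eigenvectors_div-flow} then gives $Rp(t)\in\partial J(Rp(t))$ for all $t>0$, and applying the transformation rule once more with $u:=p(t)$ converts this into $p(t)\in\partial\tilde J(p(t))$, which is the assertion. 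There is no genuine obstacle here, since the construction is a faithful conjugation; the one point I would flag as requiring care is that it is precisely the \emph{orthogonality} of $R$ (not merely its invertibility, as stated loosely before the theorem) that guarantees both the inner-product identity underlying the subdifferential rule and the commutation of the gradient flow's minimal-norm selection with $R$.
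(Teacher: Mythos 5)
Your proof is correct and takes essentially the same approach as the paper, which establishes the rotation flow result precisely by conjugation with $R$, using $\tilde{J}(u)=J(Ru)$ and $\tilde{K}=RK$ to transfer Theorem~\ref{thm:eigenvectors_div-flow} to $\tilde{J}$. Your explicit subdifferential transformation rule $p\in\partial\tilde{J}(u)\Leftrightarrow Rp\in\partial J(Ru)$ and your observation that it is the \emph{orthogonality} of $R$ (preserving inner products and hence minimal-norm selection), not mere invertibility as the paper loosely states, are worthwhile precisions, but the underlying argument is identical.
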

\subsection{Divergence-Rotation flow}
Now we define the functional
\begin{align}\label{eq:div-rot}
\calJ(u):=\int_\Omega|\div u|+\int_\Omega|\rot u|= J(u)+\tilde{J}(u)
\end{align}
which measures the sum of the distributional divergence and rotation of a vector field $u\in L^2(\Omega,\R^n)$. Its null-space still contains all gradients of harmonic functions and we denote $\calK:=\partial\calJ(0)$.

\begin{prop}\label{prop:sum_rule}
It holds for all $u\in\dom(\partial\calJ)$
\begin{align}
\partial\calJ(u)=\partial J(u)+\partial \tilde{J}(u).
\end{align}
Furthermore, the sets $\partial J(u)$ and $\partial\tilde{J}(u)$ are orthogonal.
\end{prop}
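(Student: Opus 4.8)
The plan is to exploit the orthogonal Helmholtz-type structure hidden in the two functionals, which makes them decouple. First I would record the relevant subspace decomposition of $L^2(\Omega,\R^n)$ (here $n=2$): let $G:=\overline{\{\nabla\phi\st\phi\in H^1_0(\Omega)\}}$ be the closed subspace of gradient fields, with orthogonal complement $G^\perp$, which is precisely the space of distributionally divergence-free fields. The key observation is that each functional only sees one component. Indeed $K=\partial J(0)\subset G$, since its elements are literally gradients $-\nabla v$, while $\tilde K=\partial\tilde J(0)\subset G^\perp$, since its elements $\nabla^\perp w=R\nabla w$ are divergence-free. Writing $P$ for the orthogonal projection onto $G$ and $Q=\id-P$, the identities $\div(Qv)=0$ and $\rot(Pv)=0$ give $J(v)=J(Pv)$ (as $\div v=\div Pv$) and $\tilde J(v)=\tilde J(Qv)$ (as $\rot v=\rot Qv$); in particular $\tilde J$ vanishes on $G$ and $J$ vanishes on $G^\perp$.

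With this in place the orthogonality claim is immediate: any $p\in\partial J(u)\subset K\subset G$ and any $\tilde p\in\partial\tilde J(u)\subset\tilde K\subset G^\perp$ satisfy $\langle p,\tilde p\rangle=0$, since $G\perp G^\perp$. Concretely this is the classical $L^2$-orthogonality of gradient fields and divergence-free fields, which one checks by integration by parts using the vanishing boundary values of the potentials. The inclusion $\partial J(u)\subset K$ and $\partial\tilde J(u)\subset\tilde K$ is item~5 of Proposition~\ref{prop:properties_one-hom}.

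For the set equality I would prove the two inclusions separately. The inclusion $\partial J(u)+\partial\tilde J(u)\subseteq\partial\calJ(u)$ is routine from the definition: if $\langle p,v\rangle\le J(v)$ and $\langle\tilde p,v\rangle\le\tilde J(v)$ for all $v$ with equalities at $v=u$, then $p+\tilde p$ tests correctly against $\calJ=J+\tilde J$. For the reverse inclusion I would take $r\in\partial\calJ(u)\subset\calK$ and split it orthogonally as $r=p+\tilde p$ with $p:=Pr$, $\tilde p:=Qr$. To see $p\in K$, note that for every $v\in L^2$ one has $\langle p,v\rangle=\langle Pr,Pv\rangle=\langle r,Pv\rangle\le\calJ(Pv)=J(Pv)+\tilde J(Pv)=J(v)$, where $\tilde J(Pv)=0$ because $Pv\in G$ and $J(Pv)=J(v)$ because $\div Pv=\div v$; hence $p\in\partial J(0)=K$, and symmetrically $\tilde p\in\tilde K$. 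Finally, evaluating at $u$ gives $J(u)+\tilde J(u)=\calJ(u)=\langle r,u\rangle=\langle p,u\rangle+\langle\tilde p,u\rangle$ with $\langle p,u\rangle\le J(u)$ and $\langle\tilde p,u\rangle\le\tilde J(u)$, so both inequalities must be equalities, i.e.\ $p\in\partial J(u)$ and $\tilde p\in\partial\tilde J(u)$.

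The main obstacle is exactly this reverse inclusion: for a general sum of nondifferentiable convex functionals it is the content of the Moreau--Rockafellar sum rule and requires a constraint qualification. Here the difficulty dissolves because $K$ and $\tilde K$ sit in orthogonal subspaces, so the explicit orthogonal splitting $r=Pr+Qr$ already produces the decomposition and no abstract sum rule is invoked. (Dually one could instead observe that $\calJ=\chi^*_{K+\tilde K}$ and that $K+\tilde K$ is strongly closed precisely because $K\subset G$ and $\tilde K\subset G^\perp$ are, whence $\calK=K+\tilde K$ by Theorem~\ref{thm:characterization_one-hom}.) The only genuinely analytic points that need care are the distributional identities $\div(Qv)=0$ and $\rot(Pv)=0$ underlying the decoupling $J=J\circ P$ and $\tilde J=\tilde J\circ Q$.
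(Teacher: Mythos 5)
Your proof is correct, and its architecture coincides with the paper's: orthogonally split a subgradient of $\calJ$, show each component is admissible for the corresponding functional because $J$ and $\tilde{J}$ are each blind to the other's component, and upgrade the two inequalities at $u$ to equalities since their sum is an equality (the paper phrases this last step as a contradiction, you phrase it directly; same content). The one genuine difference is the choice of decomposition. The paper invokes the Helmholtz decomposition $L^2(\Omega,\R^n)=L^2_\sigma(\Omega,\R^n)\oplus G^2(\Omega,\R^n)$ from \cite{sohr2012navier}, where $G^2$ consists of \emph{all} $L^2$ gradient fields $\nabla v$, $v\in L^2_{\mathrm{loc}}$, and $L^2_\sigma$ is the closure of the smooth compactly supported divergence-free fields; you instead project onto $G=\overline{\{\nabla\phi\st\phi\in H^1_0(\Omega)\}}$, whose orthogonal complement is the set of all distributionally divergence-free fields by an elementary computation. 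These decompositions genuinely differ — they disagree on where gradients of harmonic functions land — but that is immaterial here, since those are null-space directions of both $J$ and $\tilde{J}$: all the argument needs is $K\subset G$, $\tilde{K}\subset G^\perp$ and the invariances $J(Pv)=J(v)$, $\tilde{J}(Qv)=\tilde{J}(v)$, which you verify. What your version buys is self-containedness: identifying $G^\perp$ requires no external theorem, whereas the identification $(L^2_\sigma)^\perp=G^2$ is the nontrivial part of the result the paper cites; moreover the orthogonality claim becomes automatic from $K\subset G$, $\tilde{K}\subset G^\perp$, while the paper proves it by the very integration by parts that underlies your inclusion $\tilde{K}\subset G^\perp$. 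Your parenthetical dual route via $\calK=\overline{\conv}(K+\tilde{K})=K+\tilde{K}$ is also sound, but note it silently uses that $K$ and $\tilde{K}$ are individually closed in $L^2$; this is true (potentials bounded by $1$ with $L^2$-bounded gradients form a weakly compact set in $H^1_0$, and the constraint $|v|\le 1$ survives weak limits), but it deserves a line if you take that route.
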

Crucial for the proof of this sum rule is the \emph{Helmholtz decomposition}, which can be phrased as follows.
\begin{thm}[Helmholtz decomposition]
Let $\Omega\subset\R^n$ be an arbitrary domain and let
\begin{align*}
L^2_\sigma(\Omega,\R^n)&:=\overline{\{\varphi\in C_c^\infty(\Omega,\R^n)\st\div\varphi=0\}}^{\norm{\cdot}_{L^2}},\\
G^2(\Omega,\R^n)&:=\{-\nabla v\in L^2(\Omega,\R^n)\st v\in L^2_\mathrm{loc}(\Omega)\}.
\end{align*}
Then the \emph{Helmholtz decomposition} 
$$L^2(\Omega,\R^n)=L^2_\sigma(\Omega,\R^n)\oplus G^2(\Omega,\R^n)$$
holds and is orthogonal. The orthogonal projection onto the closed subspace $L^2_\sigma(\Omega,\R^n)$ of $L^2(\Omega,\R^n)$ is called \emph{Helmholtz projection} and denoted by $\Pi_\sigma$.
\end{thm}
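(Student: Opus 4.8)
The plan is to establish the orthogonal decomposition $L^2(\Omega,\R^n) = L^2_\sigma(\Omega,\R^n) \oplus G^2(\Omega,\R^n)$ by first proving orthogonality of the two subspaces and then showing that together they span the whole space. First I would verify orthogonality: for $\varphi \in C^\infty_c(\Omega,\R^n)$ with $\div\varphi = 0$ and $-\nabla v \in G^2(\Omega,\R^n)$ with $v \in L^2_\mathrm{loc}(\Omega)$, an integration by parts gives
\begin{align*}
\langle \varphi, -\nabla v \rangle = -\int_\Omega \varphi\cdot\nabla v \dx = \int_\Omega v\,\div\varphi \dx = 0,
\end{align*}
where the boundary terms vanish because $\varphi$ has compact support. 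Since the inner product is continuous, this orthogonality passes to the $L^2$-closure defining $L^2_\sigma(\Omega,\R^n)$. Thus the two subspaces are mutually orthogonal, and in particular their sum is direct.

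The main content is to show that the sum exhausts $L^2(\Omega,\R^n)$, equivalently that the orthogonal complement of $L^2_\sigma(\Omega,\R^n)$ is contained in $G^2(\Omega,\R^n)$. Let $u \in L^2(\Omega,\R^n)$ be orthogonal to $L^2_\sigma(\Omega,\R^n)$. Then $\langle u, \varphi\rangle = 0$ for all divergence-free $\varphi \in C^\infty_c(\Omega,\R^n)$, which is precisely the statement that the distributional rotation of $u$ (more precisely, all the antisymmetric derivatives $\partial_i u_j - \partial_j u_i$) vanishes, i.e.\ $u$ is curl-free as a distribution. The goal is then to deduce that a curl-free $L^2$ vector field on $\Omega$ is a distributional gradient, $u = -\nabla v$ for some $v \in L^2_\mathrm{loc}(\Omega)$. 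I would invoke the classical \emph{de Rham / Poincaré lemma for distributions}: a distribution whose antisymmetric first derivatives all vanish is locally a gradient, and by patching these local potentials together (adjusting constants, using connectedness of components of $\Omega$) one obtains a global $v \in L^2_\mathrm{loc}(\Omega)$ with $-\nabla v = u$. Hence $u \in G^2(\Omega,\R^n)$, giving $L^2_\sigma(\Omega,\R^n)^\perp \subset G^2(\Omega,\R^n)$; combined with the reverse inclusion from orthogonality this yields the decomposition.

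The hard part will be the potential-recovery step on a general domain $\Omega$, since the argument that a curl-free field is a gradient is only immediate locally (on simply connected pieces) and must be globalized. For an arbitrary open $\Omega$ this is handled by exhausting $\Omega$ with a locally finite cover of balls, applying the distributional Poincaré lemma on each ball, and reconciling the local potentials on overlaps; the key technical point is that the resulting potential is determined up to a locally constant function and that these constants can be chosen consistently on each connected component. Once the pointwise decomposition $u = u_\sigma + (-\nabla v)$ is available with $u_\sigma$ divergence-free and $-\nabla v$ a gradient, orthogonality forces this splitting to be unique, so the decomposition is well-defined and the associated projection onto $L^2_\sigma(\Omega,\R^n)$ is exactly the orthogonal projection $\Pi_\sigma$; boundedness and the projection property of $\Pi_\sigma$ are then automatic from the general theory of orthogonal projections onto closed subspaces of a Hilbert space.
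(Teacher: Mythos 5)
Your first half (orthogonality of the two subspaces) is sound: the integration by parts is just the definition of the distributional gradient tested against the components of $\varphi$, and orthogonality passes to the closure by continuity of the inner product. The genuine gap is in the second half. You replace the hypothesis that $u$ is orthogonal to \emph{all} divergence-free test fields by the strictly weaker consequence that $u$ is curl-free, and then claim a curl-free field is a global gradient by patching local potentials, adjusting the locally constant ambiguities ``consistently on each connected component.'' On an \emph{arbitrary} domain --- which is what the theorem asserts --- this last step is false: whether local potentials of a curl-free field glue to a single-valued global potential is obstructed by the periods of $u$ over closed loops (the first de Rham cohomology of $\Omega$), not by connectedness. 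Concretely, let $\Omega\subset\R^2$ be an annulus and $u(x,y)=(-y,x)/(x^2+y^2)$: this field is smooth and bounded on $\Omega$ and curl-free, yet it is not the gradient of any single-valued $v\in L^2_{\mathrm{loc}}(\Omega)$, since its circulation around the hole is $2\pi$. Your patching argument, as written, would ``prove'' that this $u$ lies in $G^2(\Omega,\R^2)$, which is wrong; and, consistently with the theorem, this $u$ is in fact \emph{not} orthogonal to $L^2_\sigma(\Omega,\R^2)$ --- a divergence-free test field supported in a tube winding around the hole pairs with $u$ to give a nonzero multiple of that circulation.

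The repair is to use the full strength of the hypothesis rather than only curl-freeness. Orthogonality to all divergence-free compactly supported fields kills exactly the periods: for any closed loop $\gamma$ in $\Omega$ one constructs a smooth divergence-free ``vortex tube'' field supported near $\gamma$ whose pairing with $u$ is a mollified circulation of $u$ along $\gamma$; the hypothesis forces all such circulations to vanish, and only then do the local potentials glue on each connected component. Equivalently, one invokes de Rham's theorem in the form actually used in the source the paper cites (Sohr, Lemmas 2.5.1--2.5.2, going back to de Rham and Simon): a distribution $u$ satisfying $\langle u,\varphi\rangle=0$ for all $\varphi\in C_c^\infty(\Omega,\R^n)$ with $\div\varphi=0$ equals $\nabla p$ for some distribution $p$. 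The hypothesis of that lemma is the full orthogonality, not curl-freeness, and this distinction is precisely what your reduction discards. A secondary point you gloss over: such a lemma produces only a distribution $p$; to conclude $p\in L^2_{\mathrm{loc}}(\Omega)$ one needs the standard (but not trivial) fact that a distribution whose gradient lies in $L^2_{\mathrm{loc}}$ is itself in $H^1_{\mathrm{loc}}\subset L^2_{\mathrm{loc}}$. Note finally that the paper gives no proof of its own here --- it cites Sohr's lemmas --- and those lemmas encapsulate exactly the step your proposal leaves open.
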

\begin{proof}
The proof can be found in \cite[Lemma~2.5.1, 2.5.2]{sohr2012navier}.
\end{proof}

\begin{proof}[Proof of Proposition~\ref{prop:sum_rule}]
Let $u\in\dom(\partial\calJ)$ and $p\in\partial\calJ(u)$. In particular, $p\in\calK$ and hence it holds for arbitrary $v\in L^2(\Omega,\R^n)$ that 
$$\langle p,v\rangle\leq\calJ(v)=J(v)+\tilde{J}(v).$$
In particular, one has
\begin{align*}
&\langle (1-\Pi_\sigma)p,v\rangle=\langle p,(1-\Pi_\sigma)v\rangle\leq J((1-\Pi_\sigma)v)=J(v),\\
&\langle\Pi_\sigma p,v\rangle=\langle p,\Pi_\sigma v\rangle\leq\tilde{J}(\Pi_\sigma v)=\tilde{J}(v),
\end{align*}
where we used 3. in Proposition~\ref{prop:properties_one-hom} and the self-adjointness of projections. This shows $(1-\Pi_\sigma)p\in K$ and $\Pi_\sigma p\in\tilde{K}$ and hence
\begin{align*}
\langle(1-\Pi_\sigma)p,u\rangle&\leq J(u),\\
\langle\Pi_\sigma p,u\rangle&\leq\tilde{J}(u).
\end{align*}
If we assumed that one of the inequalities was strict, then using $\langle p,u\rangle=\calJ(u)$ implied
\begin{align*}
J(u)+\tilde{J}(u)=\calJ(u)=\langle p,u\rangle=\langle(1-\Pi_\sigma)p,u\rangle+\langle\Pi_\sigma p,u\rangle<J(u)+\tilde{J}(u)
\end{align*}
which is a contradiction. Therefore, we have
\begin{align*}
\langle(1-\Pi_\sigma)p,u\rangle&=J(u),\\
\langle\Pi_\sigma p,u\rangle&=\tilde{J}(u),
\end{align*}
which lets us conclude that $(1-\Pi_\sigma)p\in\partial J(u)$ and $\Pi_\sigma p\in\partial\tilde{J}(u)$. Hence, we have established $p\in\partial J(u)+\partial\tilde{J}(u)$ which concludes the first part of the proof. Orthogonality of $\partial J(u)$ and $\partial\tilde{J}(u)$ follows from the fact that any subgradient $p_J$ and $p_{\tilde{J}}$ in these sets can be written as $p_J=-\nabla v$ and $p_{\tilde{J}}=\nabla^\perp w$ with $v,w\in H^1_0(\Omega,[-1,1])$ respectively. Approximating $w$ strongly by compactly supported and smooth functions, it follows from an integration by parts using $\div\nabla^\perp w=0$ and the zero boundary conditions of $v$ that
$$\langle p_j,p_{\tilde{J}}\rangle=-\langle\nabla v,\nabla^\perp w\rangle=\langle v,\div\nabla^\perp w\rangle=0.$$
\end{proof}

\begin{cor}\label{cor:min_subgrad_div-rot}
Let $u\in\dom(\partial\calJ)$. Then the subgradient of minimal norm in $\partial\calJ(u)$ is given by the sum of the subgradients of minimal norm in $\partial J(u)$ and $\partial\tilde{J}(u)$, respectively.
\end{cor}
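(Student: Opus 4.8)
The plan is to exploit the orthogonal sum structure just established in Proposition~\ref{prop:sum_rule}. Denote by $p_J^0$ and $p_{\tilde J}^0$ the subgradients of minimal norm in $\partial J(u)$ and $\partial\tilde J(u)$, respectively; these exist and are unique since both sets are nonempty (by the sum rule, as $u\in\dom(\partial\calJ)$), convex, and weakly closed. Set $p^0:=p_J^0+p_{\tilde J}^0$. By the sum rule we immediately have $p^0\in\partial\calJ(u)$, so the task reduces to showing that $p^0$ has minimal norm among all elements of $\partial\calJ(u)$.

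First I would take an arbitrary $p\in\partial\calJ(u)$ and invoke Proposition~\ref{prop:sum_rule} to write $p=p_J+p_{\tilde J}$ with $p_J\in\partial J(u)$ and $p_{\tilde J}\in\partial\tilde J(u)$; the same proposition guarantees $\langle p_J,p_{\tilde J}\rangle=0$. Pythagoras then gives
$$\norm{p}^2=\norm{p_J}^2+\norm{p_{\tilde J}}^2.$$
The key observation is that the Minkowski sum $\partial\calJ(u)=\partial J(u)+\partial\tilde J(u)$ means the two summands may be chosen completely independently: every pair $(p_J,p_{\tilde J})\in\partial J(u)\times\partial\tilde J(u)$ produces an admissible competitor $p_J+p_{\tilde J}\in\partial\calJ(u)$. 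Hence the minimization of $\norm{p}^2$ decouples into the two independent problems of minimizing $\norm{p_J}^2$ over $\partial J(u)$ and $\norm{p_{\tilde J}}^2$ over $\partial\tilde J(u)$, whose solutions are by definition $p_J^0$ and $p_{\tilde J}^0$. This yields $\norm{p}^2\geq\norm{p_J^0}^2+\norm{p_{\tilde J}^0}^2=\norm{p^0}^2$ for every $p\in\partial\calJ(u)$, so $p^0$ is indeed of minimal norm.

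Since there is little to prove beyond this decoupling, the only point deserving care is the admissibility and uniqueness of the decomposition. I would note that the splitting $p=(1-\Pi_\sigma)p+\Pi_\sigma p$ used in the proof of Proposition~\ref{prop:sum_rule} is precisely the Helmholtz decomposition, so $p_J$ and $p_{\tilde J}$ are uniquely determined by $p$ and the parametrization of $\partial\calJ(u)$ by the product $\partial J(u)\times\partial\tilde J(u)$ is genuine; equality in the estimate above therefore forces $p_J=p_J^0$ and $p_{\tilde J}=p_{\tilde J}^0$, i.e.\ $p=p^0$. I do not expect any real obstacle here: the entire content is carried by the orthogonality proved in Proposition~\ref{prop:sum_rule}, which reduces the claim to the elementary fact that an orthogonal sum has minimal norm exactly when each of its two components does.
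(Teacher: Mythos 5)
Your proposal is correct and takes exactly the route the paper intends: the paper's own proof is the single line ``follows directly from Proposition~\ref{prop:sum_rule}'', and your Pythagoras/decoupling argument is precisely the content of that ``directly''. The only addition is your discussion of uniqueness of the splitting via the Helmholtz projection, which is a fine (and correct) refinement but not strictly needed, since uniqueness of the minimal-norm element already follows from convexity of $\partial\calJ(u)$.
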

\begin{proof}
The proof follows directly from Proposition~\ref{prop:sum_rule}.
\end{proof}

\begin{lemma}\label{lem:prop_subgrad}
Let $p_J$ and $p_{\tilde{J}}$ be eigenvectors with eigenvalue 1 of $\partial J$ and $\partial\tilde{J}$, respectively. Then it holds
\begin{enumerate}
\item $\langle p_J,p_{\tilde{J}}\rangle=0$,
\item $p_J\in\calN(\tilde{J})$, $p_{\tilde{J}}\in\calN(J)$,
\item $p:=p_j+p_{\tilde{J}}$ is an eigenvector with eigenvalue 1 of $\partial\calJ$
\end{enumerate}
\end{lemma}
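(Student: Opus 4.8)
The plan is to reduce all three claims to the structural facts already established: the explicit descriptions $K=\{-\nabla v : v\in H^1_0(\Omega,[-1,1])\}$ and $\tilde K=\{\nabla^\perp w : w\in H^1_0(\Omega,[-1,1])\}$, together with the sum rule and orthogonality from Proposition~\ref{prop:sum_rule}. Since $p_J$ is an eigenvector of $\partial J$ with eigenvalue $1$, it lies in $\partial J(p_J)\subset K$, so I may write $p_J=-\nabla v$; likewise $p_{\tilde J}=\nabla^\perp w$ with $v,w\in H^1_0(\Omega,[-1,1])$. With these representations the three statements become short computations.

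For the first claim I would simply invoke the orthogonality already proven in Proposition~\ref{prop:sum_rule}, which is in fact orthogonality of all of $K$ to all of $\tilde K$: approximating $w$ strongly in $H^1_0$ by smooth compactly supported functions and using $\div\nabla^\perp w=0$ gives $\langle p_J,p_{\tilde J}\rangle=-\langle\nabla v,\nabla^\perp w\rangle=\langle v,\div\nabla^\perp w\rangle=0$. The second claim is even more elementary: $\rot p_J=-\rot\nabla v=0$ because the curl of a gradient vanishes, so $p_J\in\calN(\tilde J)$; symmetrically $\div p_{\tilde J}=\div\nabla^\perp w=0$ gives $p_{\tilde J}\in\calN(J)$.

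The third claim is the one that genuinely uses the previous two, and I would prove it by establishing the stronger statement $p_J\in\partial J(p)$ and $p_{\tilde J}\in\partial\tilde J(p)$, after which $p=p_J+p_{\tilde J}\in\partial J(p)+\partial\tilde J(p)=\partial\calJ(p)$ follows from the sum rule of Proposition~\ref{prop:sum_rule}, i.e.\ $p$ is an eigenvector of $\partial\calJ$ with eigenvalue $1$. To verify $p_J\in\partial J(p)$ I already have $p_J\in K$, so it remains to check $\langle p_J,p\rangle=J(p)$. Using the orthogonality from the first claim and the eigenvector property $J(p_J)=\langle p_J,p_J\rangle=\norm{p_J}^2$, I obtain $\langle p_J,p\rangle=\norm{p_J}^2+\langle p_J,p_{\tilde J}\rangle=\norm{p_J}^2=J(p_J)$, while the null-space membership from the second claim together with property~3 in Proposition~\ref{prop:properties_one-hom} yields $J(p)=J(p_J+p_{\tilde J})=J(p_J)$ since $p_{\tilde J}\in\calN(J)$. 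The two right-hand sides agree, so $p_J\in\partial J(p)$; the verification $p_{\tilde J}\in\partial\tilde J(p)$ is completely symmetric, exchanging the roles of divergence and rotation.

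The substantial input is the sum rule of Proposition~\ref{prop:sum_rule}, which rests on the Helmholtz decomposition; the present lemma is essentially a recombination of that result with the orthogonality and null-space identities, so I do not anticipate a real obstacle. The only point to treat with some care is the distributional meaning of $\div\nabla^\perp w=0$ and $\rot\nabla v=0$ for merely $H^1$ functions $v,w$, which is exactly why the approximation by smooth compactly supported functions underlying the first claim is worth keeping explicit rather than reading the identities off pointwise.
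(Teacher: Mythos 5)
Your proposal is correct, and its first two parts coincide with the paper's proof: claim 1 is exactly the integration-by-parts/density argument from Proposition~\ref{prop:sum_rule} (which, as you note, really proves orthogonality of all of $K$ to all of $\tilde{K}$, not just of subdifferentials at a common base point), and claim 2 is the same computation the paper performs, only routed through the rotation matrix $R$ and the supremum over $K$ rather than through the distributional identity $\rot\nabla v=0$ tested against $C_c^\infty$ functions. The only genuine divergence is in claim 3. The paper verifies the eigenvector identity directly, expanding $\langle p,p\rangle=\norm{p_J}^2+\norm{p_{\tilde{J}}}^2=J(p_J)+\tilde{J}(p_{\tilde{J}})=\calJ(p)$ using claims 1 and 2; strictly speaking this only gives the equality condition, and the membership $p\in\partial\calJ(0)=\calK$ needed to conclude $p\in\partial\calJ(p)$ is left implicit (it follows from $K+\tilde{K}\subset\calK$, the easy inclusion of the sum rule at $u=0$). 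You instead prove the stronger statements $p_J\in\partial J(p)$ and $p_{\tilde{J}}\in\partial\tilde{J}(p)$ and then sum them via $\partial J(p)+\partial\tilde{J}(p)\subset\partial\calJ(p)$ (again the easy, constraint-free inclusion of the sum rule). This costs essentially nothing extra -- the same two preliminary claims plus property~3 of Proposition~\ref{prop:properties_one-hom} -- but it makes the membership in $\partial\calJ(0)$ explicit and yields the slightly finer conclusion that each summand is itself a subgradient of the respective functional at the combined point $p$.
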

\begin{proof}
Ad 1.: According to Proposition~\ref{prop:sum_rule} this holds for general subgradients.

Ad 2.: We only proof $p_J\in\calN(\tilde{J})$, the proof of the second inclusion working analogously. We calculate
$$\tilde{J}(p_J)=J(Rp_J)=\sup_{p\in K}\langle p,Rp_J\rangle=\sup_{p\in K}\langle R^Tp,p_J\rangle.$$
Due to the definition of $K$ we can write $p_J=-\nabla v$ and $p=-\nabla w$ with $v,w\in H^1_0(\Omega,[-1,1])$. This yields
$$\langle R^Tp,p_J\rangle=\langle R\nabla w,-\nabla v\rangle=-\langle\nabla^\perp w,\nabla v\rangle.$$
By the same density argument as above we obtain $\langle R^Tp,p_J\rangle=0$ and hence $\tilde{J}(p_J)=0$.

Ad 3.: Using that $p_J$ and $p_{\tilde{J}}$ are eigenvectors, it holds
\begin{align*}
\langle p,p\rangle&=\langle p_J,p_J\rangle+\langle p_{\tilde{J}},p_{\tilde{J}}\rangle+2\overbrace{\langle p_J,p_{\tilde{J}}\rangle}^{=0}\\
&=J(p_J)+\tilde{J}(p_{\tilde{J}})\\
&=J(p)+\tilde{J}(p)\\
&=\calJ(p).
\end{align*}
For the third equality we used 2. together with 3. in Proposition~\ref{prop:properties_one-hom}.  
\end{proof}

From here on we easily infer 

\begin{thm}[Div-Rot flow]\label{thm:eigenvectors_div-rot-flow}
Let $\calJ$ be given by \eqref{eq:div-rot}. Then the gradient flow $\partial_t u(t)=-p(t),\;p\in\partial\calJ(u(t))$ with $u(0)=f$ yields a sequence of eigenvectors $p(t)$ for $t>0$. Furthermore, $(1-\Pi_\sigma)p(t)$ and $\Pi_\sigma p(t)$ are eigenvectors of $\partial J$ and $\partial\tilde{J}$, respectively.
\end{thm}
\begin{proof}
The gradient flow selects subgradients of minimal norm, all of which are given by the sum of the subgradients of minimal norm in $\partial J(u)$ and $\partial\tilde{J}(u)$ according to Corollary~\ref{cor:min_subgrad_div-rot}. Due to Thms~\ref{thm:eigenvectors_div-flow} and \ref{thm:eigenvectors_rot-flow}, each subgradient of minimal norm is an eigenvector of $\partial J$ and $\partial\tilde{J}$, respectively, and by 3. in Lemma~\ref{lem:prop_subgrad} we conclude that their sum is an eigenvector of $\partial\calJ$. The uniqueness of the Helmholtz decomposition concludes the proof.
\end{proof}

\section{Finite extinction time and extinction profiles}\label{sec:extinction}
Let us now consider a general gradient flow $\partial_tu(t)=-p(t),\;p(t)\in\partial J(u(t))$, with arbitrary $J\in\calC$ and initial condition $u(0)=f$\change{, where the class $\calC$ is defined in Definition~\ref{def:class_C}.} It is well-known (cf.~\cite{benning2013ground,bungert2018solution}, for instance) that if the datum $f$ is an eigenvector fulfilling $\lambda f\in\partial J(f)$, the corresponding solution of the gradient flow is given by $u(t)=\max(1-\lambda t,0)f$. Hence, there exists a time $T:=1/\lambda$, referred to as extinction time, such that $u(t)=0=\bar{f}$ for all $t\geq T$. In general, Theorem~\ref{thm:decrease} tells us that $u(t)$ only converges to $\bar{f}$ strongly as $t\to\infty$. In the following, we will investigate under which conditions on the data $f$ and the functional $J\in\calC$ this limit is attained in finite time, i.e., 
\begin{align*}
T^*(f):=\inf\{t>0\st u(t)=\bar{f}\}<\infty.
\end{align*}
Furthermore, we derive lower and upper bounds of the extinction time which will depend on the (dual) norm of the data $f$. Scale invariant upper bounds in the special case of total variation flow and related equations were shown in \cite{giga2011scale,giga2015fourth}.
\subsection{Finite extinction time}
First, we give a statement that can be interpreted as conservation of mass under the gradient flow.
\begin{lemma}[Conservation of mass]\label{lem:conserv_mean}
Let $u$ solve \eqref{eq:gradient_flow}. Then it holds $\overline{u(t)}=\bar{f}$ for all $t>0$.
\end{lemma}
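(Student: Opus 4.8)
The plan is to show that the orthogonal projection of $u(t)$ onto the null-space $\calN(J)$ is constant in time and equal to $\bar f$. Recall that by definition $\overline{u(t)} = \argmin_{w\in\calN(J)}\norm{w-u(t)}$ is precisely the orthogonal projection of $u(t)$ onto the closed linear subspace $\calN(J)$. Hence, to prove $\overline{u(t)} = \bar f$, it suffices to verify that the difference $u(t) - f$ lies in the orthogonal complement $\calN(J)^\perp$ for every $t>0$; indeed, if $u(t) - f \in \calN(J)^\perp$, then $u(t)$ and $f$ have the same projection onto $\calN(J)$, so $\overline{u(t)} = \bar f$.

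The key computation is to test the flow equation against an arbitrary element of the null-space. First I would fix $w\in\calN(J)$ and $t>0$ and write, using the fundamental theorem of calculus for the (right-)differentiable trajectory,
\begin{align*}
\langle u(t) - f, w\rangle = \int_0^t \langle \partial_t u(s), w\rangle \d s = -\int_0^t \langle p(s), w\rangle \d s.
\end{align*}
The crucial input is property 7. in Proposition~\ref{prop:properties_one-hom}, which states that every subgradient $p(s)\in\partial J(u(s))$ is orthogonal to the null-space, i.e. $\langle p(s), w\rangle = 0$ for all $w\in\calN(J)$. This makes the integrand vanish identically, so $\langle u(t) - f, w\rangle = 0$ for every $w\in\calN(J)$, which is exactly the statement $u(t) - f\in\calN(J)^\perp$.

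From here the conclusion is immediate: since $\calN(J)^\perp$ is a closed subspace and $u(t) - f$ lies in it, the projections of $u(t)$ and $f$ onto $\calN(J)$ coincide, giving $\overline{u(t)} = \bar f$. This argument is in fact a verbatim restatement of the computation already carried out in the proof of Theorem~\ref{thm:decrease}, so I would simply invoke that reasoning rather than repeat it in full.

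I do not anticipate a genuine obstacle here, since the statement is essentially a direct consequence of the orthogonality of subgradients to the null-space combined with the integral representation of the flow. The only point requiring mild care is the justification of interchanging integration and the inner product (i.e.\ that $s\mapsto\langle p(s),w\rangle$ is integrable and the fundamental theorem of calculus applies), but this is covered by the Lipschitz regularity of $u$ on $[\delta,\infty)$ and the bound $\partial_t u\in L^\infty(0,\infty;\H)$ established in Theorem~\ref{thm:brezis2}. Thus the proof reduces to the single orthogonality observation and is genuinely short.
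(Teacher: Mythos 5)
Your proof is correct and follows essentially the same route as the paper: both rest on item 7 of Proposition~\ref{prop:properties_one-hom} (subgradients are orthogonal to $\calN(J)$), the integral representation $u(t)-f=-\int_0^t p(s)\d s$, and the linearity of the orthogonal projection. The paper states this more tersely (it asserts directly that the integral of elements of $\calN(J)^\perp$ stays in $\calN(J)^\perp$, as in the proof of Theorem~\ref{thm:decrease}), while you verify it by pairing against an arbitrary $w\in\calN(J)$, which is the same argument spelled out.
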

\begin{proof}
Since $p(t)$ lies in $\calN(J)^\perp$ for all $t>0$, the same holds for $u(t)-f=-\int_0^tp(s)\d s$. Therefore, $0=\overline{u(t)-f}=\overline{u(t)}-\bar{f}$, by the linearity of the projection. 
\end{proof}

\begin{thm}[Upper bound of extinction time]\label{thm:finite_ext_time}
Let $u$ solve \eqref{eq:gradient_flow}. If there is $C>0$ such that 
\begin{align}\label{ineq:poincare}
\norm{u(t)-\bar{f}}\leq CJ(u(t)),\quad\forall t>0,
\end{align}
then it holds
\begin{align}\label{ineq:upper_bound_ext}
T^*(f)\leq C\norm{f-\bar{f}}
\end{align}
\end{thm}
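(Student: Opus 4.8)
The plan is to control the squared Hilbert distance to the null-space component, $g(t) := \frac12\norm{u(t)-\bar f}^2$, and to show that the Poincar\'e-type inequality \eqref{ineq:poincare} forces it to vanish no later than $t = C\norm{f-\bar f}$. The mechanism is that the flow dissipates this distance at a rate that, after rewriting everything in terms of $J(u(t))$, is bounded below by a positive constant, because \eqref{ineq:poincare} controls $J(u(t))$ from below by the distance itself.

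First I would compute the dissipation rate. For a.e.\ $t>0$ the solution is differentiable with $\partial_t u(t)=-p(t)$ (Theorem~\ref{thm:brezis2}), so
\begin{align*}
g'(t)=\langle\partial_t u(t),u(t)-\bar f\rangle=-\langle p(t),u(t)\rangle+\langle p(t),\bar f\rangle.
\end{align*}
Two earlier facts close this computation: since $p(t)\in\partial J(u(t))$ one has $\langle p(t),u(t)\rangle=J(u(t))$, and since $\bar f\in\calN(J)$ while $p(t)\perp\calN(J)$ by item~7 of Proposition~\ref{prop:properties_one-hom}, the second term vanishes. Hence $g'(t)=-J(u(t))\le 0$, so $g$ and thus $h(t):=\norm{u(t)-\bar f}$ are non-increasing; in particular, once the flow reaches $\bar f$ it stays there, so $T^*(f)=\inf\{t>0\st h(t)=0\}$ and the set $\{h>0\}$ is an interval $(0,T^*(f))$.

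Next I would insert the hypothesis \eqref{ineq:poincare} in the form $J(u(t))\ge \frac1C\norm{u(t)-\bar f}=\frac1C h(t)$ to get $g'(t)\le-\frac1C h(t)$ a.e. Since $u$ is locally Lipschitz on $(0,\infty)$, the norm $h=\norm{u(\cdot)-\bar f}$ is locally absolutely continuous, and from $g=\tfrac12 h^2$ one obtains $h\,h'=g'\le-\frac1C h$ a.e., i.e.\ $h'\le-\frac1C$ at a.e.\ point where $h>0$. Integrating this differential inequality over $(0,t)$ for $t<T^*(f)$, and using $h(0^+)=\norm{f-\bar f}$ by continuity of $u$ at $0$, yields $h(t)\le\norm{f-\bar f}-t/C$. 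As $h(t)>0$ for $t<T^*(f)$, this is only possible while $t<C\norm{f-\bar f}$, which gives \eqref{ineq:upper_bound_ext}; were the extinction time infinite, the same estimate would force $h$ to become negative, a contradiction.

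The main obstacle is purely the rigor of the differentiation and the endpoint handling, not any deep idea. I would need to justify the chain rule for $g$ and $h$ using only the Brezis regularity of Theorem~\ref{thm:brezis2} (Lipschitz continuity on every $[\delta,\infty)$ and right-differentiability for all $t>0$), and to integrate $h'\le-1/C$ carefully on the set $\{h>0\}$, passing to the limit $\delta\to0$ to recover the initial value $\norm{f-\bar f}$. The monotonicity $g'\le0$ established above is exactly what makes this endpoint argument clean, since it guarantees that $\{h>0\}$ is a single interval on which the bound may be integrated without worrying about $h$ re-entering positivity.
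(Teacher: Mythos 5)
Your proposal is correct and follows essentially the same route as the paper: differentiate $\tfrac12\norm{u(t)-\bar f}^2$ along the flow, use $\langle p(t),u(t)\rangle=J(u(t))$ together with $p(t)\perp\calN(J)$ to identify the dissipation as $-J(u(t))$, apply \eqref{ineq:poincare} to get $\frac{\d}{\d t}\norm{u(t)-\bar f}\leq-\tfrac1C$, and integrate. The only difference is that you spell out the measure-theoretic and endpoint details (a.e.\ differentiability, monotonicity of the set $\{h>0\}$, the limit $\delta\to 0$) which the paper leaves implicit.
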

\begin{proof}
The proof is analogous to the finite dimensional case, treated in \cite{burger2016spectral}. Using \eqref{ineq:poincare}, it follows
\begin{align*}
&\frac{1}{2}\frac{\d}{\d t}\norm{u(t)-\bar{f}}^2=\langle u(t)-\bar{f},\partial_t u(t)\rangle=-\langle u(t)-\bar{f},p(t)\rangle\\
=&-\langle u(t),p(t)\rangle=-J(u(t))\leq-\frac{1}{C}\norm{u(t)-\bar{f}}.
\end{align*}
This readily implies 
$$\frac{\d}{\d t}\norm{u(t)-\bar{f}}\leq-\frac{1}{C}$$
and, integrating this equation,
$$\norm{u(t)-\bar{f}}\leq\norm{f-\bar{f}}-\frac{t}{C}.$$
Therefore, for $t\geq\norm{f-\bar{f}}C$ it holds $u(t)=\bar{f}$ which concludes the proof. 
\end{proof}

\begin{rem}
By Lemma~\ref{lem:conserv_mean}, a sufficient condition for \eqref{ineq:poincare} to hold is the validity of the Poincar\'{e}-type inequality
\begin{align}\label{ineq:general_poincare}
\norm{u-\bar{u}}\leq CJ(u),\quad\forall u\in\H.
\end{align}
\end{rem}

\begin{rem}
Revisiting the example of a datum fulfilling $\lambda f\in\partial J(f)$, we observe that the upper bound \eqref{ineq:upper_bound_ext} is sharp. It holds $u(t)=\max(1-\lambda t,0)f$ and consequently we have 
$$\frac{\norm{u(t)-\bar{f}}}{J(u(t))}=\frac{\norm{u(t)}}{J(u(t))}=\frac{\norm{f}}{J(f)}$$
and, hence, the smallest possible constant in \eqref{ineq:poincare} is given by $C=\frac{\norm{f}}{J(f)}$. This implies that 
$$C\norm{f-\bar{f}}=C\norm{f}=\frac{\norm{f}^2}{J(f)}=\frac{1}{\lambda}=T^*(f).$$
\end{rem}

\begin{example}
Since any $J\in\calC$ is a norm on the subspace $\mathcal{V}=\dom(J)\cap\calN(J)^\perp\subset\H$, the Poincar\'{e} inequality \eqref{ineq:general_poincare} always holds if $\H$ is finite dimensional.
\end{example}

\begin{example}
Since for $n=1,2$ and $u\in\bv(\Omega)$ where $\Omega\subset\R^n$ is a bounded domain one has the Poincar\'{e} inequality \cite{ambrosio2000functions}
$$\norm{u-\bar{u}}\leq C\,\tv(u),$$
where $\bar{u}:=|\Omega|^{-1}\int_\Omega u\dx$ and $C>0$, the one and two-dimensional $\tv$-flow $\partial_tu(t)=-p(t),\;p(t)\in\partial \tv(u(t))$ becomes extinct in finite time for all initial conditions $u(0)=f\in L^2(\Omega)$. 
\end{example}

\begin{example}
Since not even on bounded domains there is an embedding from $L^1(\Omega)$ to $L^2(\Omega)$, one cannot expect finite extinction time for the $L^1$-flow introduced in Section~\ref{sec:l1flow}. Indeed one has 
$$u(t)=f-\int_0^tp(s)\d s$$
where $\norm{p(s)}_{L^\infty}\leq 1$. This yields $u(t)\geq f-t$ almost everywhere in $\Omega$ for all $t\geq 0$. Hence, if $f$ is an unbounded $L^2$-function, the extinction time is infinite. Hence, the quite strong property of a flow yielding a sequence of non-linear eigenvectors still does not imply a finite extinction time. 
\end{example}

We can also give a converse statement of Theorem~\ref{thm:finite_ext_time}, namely that the existence of a finite extinction time implies the validity of a Poincar\'{e}-type inequality.

\begin{prop}
Let $u$ solve \eqref{eq:gradient_flow} and assume that there exists $T>0$ such that the solutions of the gradient flow satisfies $u(t)=\bar{f}$ for all $t\geq T$. Then, one has the estimate
\begin{align}
\norm{u(t)-\bar{f}}\leq  \sqrt{T-t}\sqrt{J(u(t))},\quad\forall 0\leq t\leq T.
\end{align}
In particular, for $t=0$ and initial data satisfying $\norm{f-\bar{f}}=1$ it this implies
$$ \norm{f-\bar{f}}\leq T J(f).$$
\end{prop}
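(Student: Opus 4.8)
The plan is to convert the energy--dissipation identity already appearing in the proof of Theorem~\ref{thm:finite_ext_time} into an exact integral formula for $\norm{u(t)-\bar f}^2$, and then to extract the sharp constant from the \emph{convexity} of $t\mapsto J(u(t))$ rather than from mere monotonicity. First I would record the dissipation identity: exactly as in Theorem~\ref{thm:finite_ext_time}, using $\partial_t u(t)=-p(t)$, property 7 of Proposition~\ref{prop:properties_one-hom} (so that $\langle p(t),\bar f\rangle=0$ since $\bar f\in\calN(J)$), and $\langle p(t),u(t)\rangle=J(u(t))$, one obtains
$$\frac12\frac{\d}{\d t}\norm{u(t)-\bar f}^2=-J(u(t)),\qquad t>0.$$
Since $u(T)=\bar f$ by hypothesis and $u$ is Lipschitz on $[\delta,\infty)$ for every $\delta>0$ (Theorem~\ref{thm:brezis2}), the map $t\mapsto\norm{u(t)-\bar f}^2$ is absolutely continuous, so integrating from $t$ to $T$ and using $\norm{u(T)-\bar f}=0$ gives the exact formula
$$\norm{u(t)-\bar f}^2=2\int_t^T J(u(s))\,\d s,\qquad 0\le t\le T,$$
the endpoint $t=0$ following by continuity.

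Next I would exploit the qualitative behaviour of $j(s):=J(u(s))$ from item 5 of Theorem~\ref{thm:brezis2}, namely that $s\mapsto j(s)$ is convex and non-increasing. Moreover $j(T)=0$, because $u(T)=\bar f\in\calN(J)$ and $J$ vanishes on its null-space. Writing $s=(1-\lambda)t+\lambda T$ with $\lambda=(s-t)/(T-t)$, convexity together with $j(T)=0$ yields the chord bound $j(s)\le(1-\lambda)j(t)=\frac{T-s}{T-t}\,j(t)$ for $s\in[t,T]$. Integrating,
$$\int_t^T j(s)\,\d s\le\frac{j(t)}{T-t}\int_t^T(T-s)\,\d s=\frac{(T-t)\,j(t)}{2}.$$
Substituting into the integral formula gives $\norm{u(t)-\bar f}^2\le(T-t)\,J(u(t))$, and taking square roots yields the claimed estimate. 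For the concluding inequality I would use this squared form at $t=0$, namely $\norm{f-\bar f}^2\le T\,J(f)$; with $\norm{f-\bar f}=1$ the left-hand side equals $\norm{f-\bar f}$, so $\norm{f-\bar f}\le T\,J(f)$.

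I expect the only genuinely delicate point to be the constant. The naive estimate $\int_t^T j(s)\,\d s\le(T-t)\,j(t)$ based on monotonicity alone loses a factor of $2$ and produces a strictly weaker bound; recovering the sharp factor is precisely where the \emph{convexity} of $t\mapsto J(u(t))$ — not just its monotonicity — is indispensable, with the chord inequality the exact mechanism. The remaining steps are routine: the legitimacy of the fundamental theorem of calculus is secured by the Lipschitz continuity of $u$ away from the origin, and the behaviour at the endpoint $t=0$ is handled by passing to the limit, using that $j$ is non-increasing so that $J(u(t))\le J(f)$.
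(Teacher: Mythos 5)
Your proof is correct, but it takes a genuinely different route from the paper's. The paper writes $u(t)-\bar f=\int_t^T p(s)\,\d s$, applies the Cauchy--Schwarz inequality in time to get $\norm{u(t)-\bar f}\leq\sqrt{T-t}\,\bigl(\int_t^T\norm{p(s)}^2\d s\bigr)^{1/2}$, and then evaluates the remaining integral exactly via the dissipation identity $\frac{\d}{\d s}J(u(s))=-\norm{p(s)}^2$ from item 5 of Theorem~\ref{thm:brezis2}, giving $\int_t^T\norm{p(s)}^2\d s=J(u(t))-J(u(T))=J(u(t))$. You instead integrate the \emph{other} energy identity, $\frac12\frac{\d}{\d t}\norm{u(t)-\bar f}^2=-J(u(t))$ --- the same one the paper uses for the upper bound on the extinction time in Theorem~\ref{thm:finite_ext_time} --- to obtain the exact formula $\norm{u(t)-\bar f}^2=2\int_t^T J(u(s))\d s$, and then bound that integral by the chord inequality for the convex, vanishing-at-$T$ function $s\mapsto J(u(s))$. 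So both arguments pivot on item 5 of Theorem~\ref{thm:brezis2}, but on complementary halves of it: the paper uses the dissipation equality directly, while you use the convexity of $t\mapsto J(u(t))$, which in Brezis's theory is itself a consequence of that equality together with the monotonicity of $t\mapsto\norm{p(t)}$. What your route buys is structural: it makes the proposition a literal converse companion to Theorem~\ref{thm:finite_ext_time} (same identity, integrated over $[t,T]$ rather than turned into a differential inequality), and it isolates exactly where the sharp constant comes from --- as you note, monotonicity alone loses a factor of $2$, whereas in the paper's proof the sharpness is carried by Cauchy--Schwarz instead. One small point of bookkeeping: your limiting argument at $t=0$ invokes $J(u(t))\leq J(f)$, i.e.\ monotonicity of $t\mapsto J(u(t))$ down to the initial time; this is standard in the Brezis theory for $f\in\dom(J)$ (and the case $f\notin\dom(J)$ is vacuous since then $J(f)=+\infty$), and the paper's own proof at $t=0$ implicitly relies on the same continuity of $J(u(\cdot))$ at $0$, so this is not a gap, merely a step worth flagging as coming from the general theory rather than from the paper's stated version of Theorem~\ref{thm:brezis2}.
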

\begin{proof}
It holds $u(t)-\bar{f}=\int_t^T p(s)\d s$ which implies with the H\"{o}lder inequality and 5. in Theorem~\ref{thm:brezis2}
\begin{align*}
\norm{u(t)-\bar{f}}&\leq\int_t^T\norm{p(s)}\d s\\
&\leq\sqrt{T-t}\sqrt{\int_t^T\norm{p(s)}^2\d s}\\
&=\sqrt{T-t}\sqrt{\int_T^t\frac{\d}{\d s}J(u(s))\d s}\\
&=\sqrt{T-t}\sqrt{J(u(t))}.
\end{align*}
Noticing $\norm{f-\bar{f}}=1=\norm{f-\bar{f}}^2$ we obtain the assertion for $t=0$.
\end{proof}

While upper bounds of the extinction time like \eqref{ineq:upper_bound_ext} which depend on some Poincar\'{e} constant have already been considered in the literature \cite{andreu2002some,hauer2017kurdyka}, we can also give a sharp lower bound of the extinction time which is novel to the best of our knowledge.

\begin{prop}[Lower bound of extinction time]
It holds
\begin{align}\label{ineq:lower_bound_ext}
T^*(f)\geq \norm{f}_*,
\end{align}
where $\norm{f}_*:=\sup_{p\in\calN(J)^\perp}\frac{\langle f,p\rangle}{J(p)}$ denotes the dual norm of $f$ with respect to $J$.
\end{prop}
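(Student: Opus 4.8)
The plan is to bound the dual norm $\norm{f}_*$ from above by the extinction time, by pairing the subgradients produced along the flow against an arbitrary test element of $\calN(J)^\perp$. If $T^*(f)=\infty$ the asserted inequality is trivial, so I would assume $T^*:=T^*(f)<\infty$. The starting point is to write $f-\bar f$ as an integral of the subgradients over the \emph{finite} interval $[0,T^*]$. Indeed, $u(T^*)=\bar f$ and the flow stays stationary afterwards: since $\bar f\in\calN(J)$ minimizes $J$ globally we have $0\in\partial J(\bar f)$, which is then the minimal-norm subgradient, so by Theorem~\ref{thm:brezis2} the right derivative $\partial_t u(t)$ vanishes and hence $p(t)=0$ for $t>T^*$. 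Combined with Corollary~\ref{cor:indef_int} this gives $f-\bar f=\int_0^{T^*}p(s)\d s$.

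Next I would test this identity against an arbitrary $w\in\calN(J)^\perp\setminus\{0\}$. Because $\bar f\in\calN(J)$ is orthogonal to $w$, we get $\langle f,w\rangle=\langle f-\bar f,w\rangle=\int_0^{T^*}\langle p(s),w\rangle\d s$. The crucial ingredient is that each $p(s)\in\partial J(u(s))\subset\partial J(0)=K$, so by the definition of the subdifferential at zero in \eqref{eq:subdiff_0} one has $\langle p(s),w\rangle\leq J(w)$ for every $s$. Inserting this pointwise estimate into the integral yields $\langle f,w\rangle\leq T^*\,J(w)$. Since $J$ is a norm on $\mathcal V$, one has $J(w)>0$ for $w\in\calN(J)^\perp\setminus\{0\}$, so dividing gives $\langle f,w\rangle/J(w)\leq T^*$; taking the supremum over all such $w$ produces exactly $\norm{f}_*\leq T^*$, as claimed.

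There is no serious obstacle here: once the integral representation over $[0,T^*]$ is in place the estimate is a one-line duality argument. The only points needing care are the justification that $p$ vanishes past the extinction time---which is precisely what keeps the integration domain finite and makes the bound nontrivial---and the reduction to the case $T^*<\infty$. I would also note that the bound is sharp, matching the upper bound of Theorem~\ref{thm:finite_ext_time}: for eigenvector data $\lambda f\in\partial J(f)$ one has $u(t)=\max(1-\lambda t,0)f$, so $T^*=1/\lambda$, while testing with $w=f$ and using $\lambda f\in K$ gives $\norm{f}_*=1/\lambda=\norm{f}^2/J(f)$, so equality holds.
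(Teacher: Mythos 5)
Your proof is correct and follows essentially the same route as the paper's: write $f-\bar f=\int_0^{T^*(f)}p(t)\,\mathrm{d}t$, pair against an arbitrary $w\in\calN(J)^\perp$, and use $\langle p(t),w\rangle\leq J(w)$ from \eqref{eq:subdiff_0} to get $\langle f,w\rangle\leq T^*(f)J(w)$ before taking the supremum. Your additional care in justifying the finite integration domain (via $p(t)=0$ past extinction), the reduction to $T^*(f)<\infty$, and the sharpness check are all sound refinements of details the paper leaves implicit.
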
 
\begin{proof}
Using $f-\bar{f}=\int_0^{T^*(f)}p(t)\d t$ it holds
\begin{align*}
\norm{f}_*=\sup_{p\in\calN(J)^\perp}\frac{\langle f,p\rangle}{J(p)}=\sup_{p\in\calN(J)^\perp}\frac{1}{J(p)}\int_0^{T^*(f)}{\langle p(t),p\rangle}\d t\leq T^*(f),
\end{align*}
\change{where we used that due to \eqref{eq:subdiff_0} it holds $\langle p(t),p\rangle\leq J(p)$ for all $t>0$.}
\end{proof}
\begin{rem}
Since according to \cite{bungert2018solution} the variational problem \eqref{eq:var_prob} has the minimal extinction time $\norm{f}_*$, we conclude that the extinction time of the gradient flow is always larger or equal the extinction time of the variational problem. Furthermore, this lower bound is sharp since in the case of a spectral decomposition the gradient flow and the variational problem have the same primal solution according to Theorem~\ref{thm:equivalence_gf_vp}.
\end{rem}

\subsection{Extinction profiles: the general case}
Let us now assume that the gradient flow becomes extinct in finite time and to simplify notation we consider the case of a datum $f$ fulfilling $\bar{f}=0$ without loss of generality (cf.~Lemma~\ref{lem:equivalence_init_cond}). In \cite{andreu2002some} it was shown for the special case of 2D total variation flow with initial condition $f\in L^\infty(\Omega)$ that $T^*(f)<\infty$ and there is an increasing sequence of times $t_n$ such that $t_n\to T^*(f)$ and
\begin{align}\label{eq:ext_conv}
\lim_{n\to\infty}w(t_n)= p^*
\end{align}
strongly in $L^2(\Omega)$ where 
\begin{align}
w(t):=\begin{cases}
\frac{u(t)}{T^*(f)-t},&0<t<T^*(f),\\
0,&$else$.
\end{cases}
\end{align}
Here $p^*\neq 0$ is an eigenvector and is referred to as \emph{extinction profile}. Note that $w(t)$ approximates the negative of the left-derivative of $t\mapsto u(t)$ at $t=T^*(f)$ which---opposed to the right-derivative---is not guaranteed to exist. The proof in \cite{andreu2002some} is highly technical and uses a lot of structure that comes from the explicit form of the functional $J=TV$. However, in our general framework this result can be harvested very easily and for general functionals $J\in\calC$. First, we show that, if the strong limit \eqref{eq:ext_conv} exists, it is an eigenvector.

\begin{thm}\label{thm:ext_profile}
Let $u(t)$ solve the gradient flow \eqref{eq:gradient_flow} with extinction time $T^*(f)<\infty$, and assume that there is an increasing sequence $t_n\to T^*(f)$ such that $w(t_n)\to p^*$ as $n\to\infty$ holds for some $p^*\in\H$. Then $p^*\in\partial J(p^*)$ and if the Poincar\'{e} inequality \eqref{ineq:poincare} holds one has $p^*\neq 0$. 
\end{thm}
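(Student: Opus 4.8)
The plan is to prove the eigenvector relation by unwinding its definition: $p^*\in\partial J(p^*)$ holds precisely when $p^*\in\partial J(0)=K$ and $\langle p^*,p^*\rangle=J(p^*)$. Throughout I take $\bar f=0$ without loss of generality, so that $u(t)\to 0$ and $u(t)=\int_t^{T^*(f)}p(s)\,\d s$. The essential idea is that although the given strong limit $p^*$ arises from the \emph{averaged} quantity $w(t)=u(t)/(T^*(f)-t)$, which need not lie in $K$, the averages $w(t)$ must become asymptotically indistinguishable from the instantaneous minimal-norm subgradients $p(t)\in K$ as $t\to T^*(f)$. Transferring the convergence onto $p(t_n)$ then places the limit inside the closed set $K$.

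Concretely, I would first record three facts. Using absolute one-homogeneity together with $p(t)\in\partial J(u(t))=\partial J(w(t))$ (item~6 of Proposition~\ref{prop:properties_one-hom}), one has $\langle p(t),w(t)\rangle=J(w(t))$. Since $s\mapsto\norm{p(s)}$ is non-increasing (item~4 of Theorem~\ref{thm:brezis2}), the representation $u(t)=\int_t^{T^*(f)}p(s)\,\d s$ and the triangle inequality for the Hilbert norm give $\norm{w(t)}\le\norm{p(t)}$. Finally, integrating the dissipation identity $\frac{\d}{\d t}J(u(t))=-\norm{p(t)}^2$ (item~5 of Theorem~\ref{thm:brezis2}) from $t$ to $T^*(f)$ and using $J(u(T^*(f)))=0$ yields $J(u(t))=\int_t^{T^*(f)}\norm{p(s)}^2\,\d s$, hence $J(w(t))=\frac{1}{T^*(f)-t}\int_t^{T^*(f)}\norm{p(s)}^2\,\d s\to\ell^2$, where $\ell:=\lim_{t\to T^*(f)}\norm{p(t)}$ exists by monotonicity.

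The crux, and the step I expect to be the main obstacle, is to show $\norm{w(t_n)-p(t_n)}\to 0$. Expanding and using $\langle p(t),w(t)\rangle=J(w(t))$ gives $\norm{w(t_n)-p(t_n)}^2=\norm{w(t_n)}^2-2J(w(t_n))+\norm{p(t_n)}^2$, whose limit equals $\norm{p^*}^2-2\ell^2+\ell^2=\norm{p^*}^2-\ell^2$. Since each term on the left is nonnegative, this forces $\norm{p^*}=\ell$ and $\norm{w(t_n)-p(t_n)}\to 0$ simultaneously (the bound $\norm{w(t)}\le\norm{p(t)}$ already supplies $\norm{p^*}\le\ell$ for the reverse inequality). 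Consequently $p(t_n)\to p^*$ strongly; as each $p(t_n)\in\partial J(u(t_n))\subset K$ and $K$ is closed, $p^*\in K$, which yields $\norm{p^*}^2\le J(p^*)$. On the other hand $J(w(t_n))=\langle p(t_n),w(t_n)\rangle\to\langle p^*,p^*\rangle=\norm{p^*}^2$, so lower semicontinuity of $J$ gives $J(p^*)\le\norm{p^*}^2$. Hence $J(p^*)=\norm{p^*}^2$ with $p^*\in K$, i.e. $p^*\in\partial J(p^*)$; the degenerate case $\ell=0$ is automatically included, giving $p^*=0\in\partial J(0)$.

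For the final assertion, assume the Poincar\'e inequality \eqref{ineq:poincare}, which with $\bar f=0$ reads $\norm{u(t)}\le CJ(u(t))$. Setting $r(t)=\norm{u(t)}$ and differentiating gives $r'(t)=-\langle u(t),p(t)\rangle/\norm{u(t)}=-J(u(t))/r(t)\le -1/C$ wherever $u(t)\ne 0$. Integrating from $t$ to $T^*(f)$ and using $r(T^*(f))=0$ produces the linear lower bound $r(t)\ge (T^*(f)-t)/C$, so that $\norm{w(t)}=r(t)/(T^*(f)-t)\ge 1/C$. Passing to the limit along $t_n$ yields $\norm{p^*}\ge 1/C>0$, and in particular $p^*\neq 0$.
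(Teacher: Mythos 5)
Your proposal is correct, and its core runs along a genuinely different track from the paper's. The paper isolates the key convergence $\norm{p(t)-w(t)}\to 0$ in a separate technical result (Lemma~\ref{lem:techn_est}), proved by showing that $t\mapsto J(w(t))-\tfrac{1}{2}\norm{w(t)}^2$ is a Lyapunov functional whose dissipation rate is exactly $\norm{p(t)-w(t)}^2/(T^*(f)-t)$; integrating this and invoking integrability near $T^*(f)$ forces the limit to vanish, and the argument normalizes to $f\in\dom(J)$ to start. You bypass that lemma entirely: from the integrated dissipation identity $J(u(t))=\int_t^{T^*(f)}\norm{p(s)}^2\,\d s$ (item 5 of Theorem~\ref{thm:brezis2}) and the monotonicity of $t\mapsto\norm{p(t)}$ you identify $\lim J(w(t))=\ell^2$ with $\ell=\lim_{t\nearrow T^*(f)}\norm{p(t)}$, and then the expansion $\norm{w-p}^2=\norm{w}^2-2J(w)+\norm{p}^2$ (valid because $p(t)\in\partial J(u(t))=\partial J(w(t))$ by item 6 of Proposition~\ref{prop:properties_one-hom}) together with the elementary bound $\norm{w(t)}\le\norm{p(t)}$ squeezes out both $\norm{p^*}=\ell$ and $\norm{w(t_n)-p(t_n)}\to 0$ at once. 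A further downstream difference: the paper obtains $p^*\in K$ directly by averaging $\langle p(s),v\rangle\le J(v)$ over $[t_n,T^*(f)]$, whereas you deduce it from the strong convergence $p(t_n)\to p^*$ and closedness of $K$; both are fine. What each approach buys: yours is more elementary and self-contained (no chain-rule computation for $\partial_t w$, no auxiliary Lyapunov functional, no reduction to $f\in\dom(J)$, and it sidesteps the somewhat delicate final step of Lemma~\ref{lem:techn_est}, where integrability of $\norm{p(t)-w(t)}^2/(T^*(f)-t)$ is used to conclude pointwise vanishing), and it yields the extra information that every strong subsequential limit of $w$ has the same norm $\ell$; the paper's lemma is stronger in that it gives $\norm{p(t)-w(t)}\to 0$ along the full limit $t\nearrow T^*(f)$, not merely along the prescribed sequence $(t_n)$. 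Your treatment of the nondegeneracy claim $p^*\neq 0$ under the Poincar\'e inequality \eqref{ineq:poincare} coincides with the paper's argument.
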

\begin{proof}
Let us show $p^*\in K$, first. Due to $u(T^*(f))=0$ and $p(t)=-\partial_t u(t)$ for $t>0$ it holds 
$$u(t_n)=\int_{t_n}^{T^*(f)} p(t) \d t$$
and, hence, we calculate for arbitrary $v\in\H$
$$\left\langle\frac{u(t_n)}{T^*(f)-t_n},v\right\rangle=\frac{1}{T^*(f)-t_n}\int_{t_n}^{T^*(f)}\langle p(t),v\rangle\d t\leq J(v),\quad\forall n\in\N,$$
where we used that $p(t)\in K$ for all $t>0$. By the closedness of $K$ we infer that also $p^*\in K$ holds. 

To show that $p^*\in\partial J(p^*)$, we calculate using lower semi-continuity of~$J$:
\begin{align*}
J(p^*)&\leq\liminf_{n\to\infty}\frac{1}{T^*(f)-t_n}J(u(t_n))\\
&=\liminf_{n\to\infty}\frac{1}{T^*(f)-t_n}\langle p(t_n),u(t_n)\rangle\\
&=\liminf_{n\to\infty}\langle p(t_n),w(t_n)\rangle.
\end{align*}
Now we claim (cf.~Lemma~\ref{lem:techn_est} below) that $\lim_{t\nearrow T^*(f)}\norm{p(t)-w(t)}=0$ which, together with the strong convergence of $w(t_n)$ to $p^*$, immediately implies
\begin{align*}
J(p^*)&\leq\liminf_{n\to\infty}\langle p(t_n),w(t_n)\rangle\\
&=\liminf_{n\to\infty}\langle p(t_n)-w(t_n),w(t_n)\rangle+\norm{w(t_n)}^2\\
&=\lim_{n\to\infty}\norm{w(t_n)}^2=\norm{p^*}^2,
\end{align*}
and, hence, $p^*$ is an eigenvector. Here we used that $\langle p(t_n)-w(t_n),w(t_n)\rangle\leq\norm{p(t_n)-w(t_n)}\norm{w(t_n)}$ which converges to zero since $\norm{w(t_n)}$ is uniformly bounded in $n$. 

To show that $p^*\neq 0$ in case of \eqref{ineq:poincare}, we observe that
\begin{align*}
\frac{\d}{\d t}\frac{1}{2}\norm{u(t)}^2=\langle\partial_t u(t),u(t)\rangle=-\langle p(t),u(t)\rangle=-J(u(t))
\end{align*}
holds, which implies with \eqref{ineq:poincare} that
\begin{align*}
\frac{\d}{\d t}\norm{u(t)}=\frac{-J(u(t))}{\norm{u(t)}}\leq-\frac{1}{C}.
\end{align*}
Integrating this inequality from $0\leq t< T^*(f)$ to $T^*(f)$ yields
$$-\norm{u(t)}\leq-\frac{1}{C}(T^*(f)-t)$$
and, hence, 
$$\norm{w(t)}=\frac{\norm{u(t)}}{T^*(f)-t}\geq\frac{1}{C}>0,\quad\forall 0\leq t<T^*(f).$$
This implies that $p^*\neq 0$ holds.
\end{proof}

\begin{lemma}\label{lem:techn_est}
Under the conditions of Theorem~\ref{thm:ext_profile} it holds $\norm{p(t)-w(t)}\to 0$ as $t\nearrow T^*(f)$.
\end{lemma}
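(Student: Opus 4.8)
The plan is to express both $p(t)$ and $w(t)$ through the single monotone scalar quantity $\lambda(t):=\norm{p(t)}$ and to reduce the whole statement to the elementary fact that Ces\`aro averages of a non-increasing function over the shrinking intervals $[t,T^*(f)]$ converge to its limit. First I would record the integral representation $u(t)=\int_t^{T^*(f)} p(s)\d s$, which follows from $\partial_s u(s)=-p(s)$, the Lipschitz (hence absolutely continuous) dependence of $u$ on $t$ away from $0$ furnished by Theorem~\ref{thm:brezis2}, and $u(T^*(f))=0$. Consequently $w(t)=\frac{1}{T^*(f)-t}\int_t^{T^*(f)} p(s)\d s$ is exactly the average of $p$ over $[t,T^*(f)]$. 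Since by item~4 of Theorem~\ref{thm:brezis2} the map $t\mapsto\lambda(t)$ is non-increasing and bounded below by $0$, it has a limit $\lambda^*:=\lim_{t\nearrow T^*(f)}\lambda(t)\ge 0$.

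The next step is to identify the cross term. Using $p(t)\in\partial J(u(t))$ we have $\langle p(t),u(t)\rangle=J(u(t))$, whence $\langle p(t),w(t)\rangle=J(u(t))/(T^*(f)-t)$. Item~5 of Theorem~\ref{thm:brezis2} gives $\frac{\d}{\d t}J(u(t))=-\norm{p(t)}^2=-\lambda(t)^2$, and since $J(u(\cdot))$ is Lipschitz on compact subintervals and $J(u(T^*(f)))=J(0)=0$, integrating yields $J(u(t))=\int_t^{T^*(f)}\lambda(s)^2\d s$. Therefore $\langle p(t),w(t)\rangle$ equals the average of $\lambda^2$ over $[t,T^*(f)]$; because $\lambda^2$ is non-increasing this average is squeezed between $(\lambda^*)^2$ and $\lambda(t)^2$, both tending to $(\lambda^*)^2$, so $\langle p(t),w(t)\rangle\to(\lambda^*)^2$. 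In the same spirit, the triangle inequality for the integral gives $\norm{w(t)}\le\frac{1}{T^*(f)-t}\int_t^{T^*(f)}\lambda(s)\d s$, which is the average of $\lambda$ and is likewise squeezed to $\lambda^*$; hence $\limsup_{t\nearrow T^*(f)}\norm{w(t)}^2\le(\lambda^*)^2$.

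Finally I would expand
\[
\norm{p(t)-w(t)}^2=\norm{p(t)}^2-2\langle p(t),w(t)\rangle+\norm{w(t)}^2 .
\]
The first term converges to $(\lambda^*)^2$ and the middle term to $2(\lambda^*)^2$, while the last has $\limsup$ at most $(\lambda^*)^2$, so $\limsup_{t\nearrow T^*(f)}\norm{p(t)-w(t)}^2\le(\lambda^*)^2-2(\lambda^*)^2+(\lambda^*)^2=0$, and non-negativity forces the full limit to vanish. I do not expect a serious obstacle: the only points requiring care are the two integral identities, namely $u(t)=\int_t^{T^*(f)}p(s)\d s$ and $J(u(t))=\int_t^{T^*(f)}\lambda(s)^2\d s$, both justified by the absolute continuity supplied by Brezis' theorem, after which everything collapses to the monotone-average squeeze. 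A minor subtlety worth noting is that the argument is entirely insensitive to whether $\lambda^*=0$ or $\lambda^*>0$, so no separate treatment is needed depending on how the flow becomes extinct.
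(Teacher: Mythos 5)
Your proof is correct, but it follows a genuinely different route from the paper's. The paper introduces the Lyapunov quantity $J(w(t))-\tfrac{1}{2}\norm{w(t)}^2$, computes its time derivative via the relation $\partial_t w(t)=(\partial_t u(t)+w(t))/(T^*(f)-t)$ to obtain the dissipation identity
\begin{equation*}
\frac{\d}{\d t}\left(J(w(t))-\frac{1}{2}\norm{w(t)}^2\right)=-\frac{\norm{p(t)-w(t)}^2}{T^*(f)-t}\leq 0,
\end{equation*}
and then concludes from the boundedness of this quantity (assuming, without loss of generality, $f\in\dom(J)$) that $t\mapsto\norm{p(t)-w(t)}^2/(T^*(f)-t)$ is integrable near $T^*(f)$, whence the limit must vanish. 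You instead reduce everything to the scalar $\lambda(t)=\norm{p(t)}$: using the integral representations $u(t)=\int_t^{T^*(f)}p(s)\d s$ and $J(u(t))=\int_t^{T^*(f)}\lambda(s)^2\d s$ (the latter from item 5 of Theorem~\ref{thm:brezis2} together with $J(u(T^*(f)))=J(\bar f)=0$), you identify $\langle p(t),w(t)\rangle$ and bound $\norm{w(t)}$ by Ces\`aro averages of $\lambda^2$ and $\lambda$, which the monotonicity of $\lambda$ squeezes to $(\lambda^*)^2$ and $\lambda^*$ respectively; expanding $\norm{p(t)-w(t)}^2$ then kills all terms in the limit. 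Your argument buys two things: it does not need the reduction to $f\in\dom(J)$, and it avoids the somewhat delicate final step of the paper, where one must pass from integrability of $\norm{p(t)-w(t)}^2/(T^*(f)-t)$ to a genuine pointwise limit (which, for a merely measurable integrand, requires an extra continuity or monotonicity argument that the paper leaves implicit). What the paper's approach buys in exchange is the monotone Lyapunov structure itself -- the fact that $J(w(t))-\tfrac12\norm{w(t)}^2$ decreases along the flow -- which is of independent interest, whereas your computation yields the additional quantitative information that $\norm{p(t)}$, $\norm{w(t)}$ and $\langle p(t),w(t)\rangle^{1/2}$ all converge to the common limit $\lambda^*$.
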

\begin{proof}
Using the relation $\partial_t w(t)=(\partial_t u(t)+w(t))/(T^*(f)-t)$, the chain rule, and $p(t)\in\partial J(u(t))$, we obtain the estimate
\begin{align}\label{ineq:en_diss}
\frac{\d}{\d t}&\left(J(w(t))-\frac{1}{2}\norm{w(t)}^2\right)\notag
=\frac{\d}{\d t}\left(\frac{J(u(t))}{T^*(f)-t}-\frac{1}{2}\norm{w(t)}^2\right)\notag\\
&=\frac{\langle p(t),\partial_t u(t)\rangle}{T^*(f)-t}+\frac{J(u(t))}{(T^*(f)-t)^2}-\langle w(t),\partial_t w(t)\rangle\notag\\
&=\frac{\langle p(t),\partial_t u(t)\rangle}{T^*(f)-t}+\frac{\langle p(t),u(t)\rangle}{(T^*(f)-t)^2}-\langle w(t),\partial_t w(t)\rangle\notag\\
&=\langle p(t),\partial_t w(t)\rangle-\langle w(t),\partial_t w(t)\rangle\notag
=-\langle \partial_t u(t)+w(t),\partial_t w(t)\rangle\notag\\
&=-(T^*(f)-t)\norm{\partial_t w(t)}^2=-\frac{\norm{p(t)-w(t)}^2}{T^*(f)-t}\leq 0
\end{align}
Since $u(t)\in\dom(J)$ for all $t>0$, we can without loss of generality assume that $u(0)=f\in\dom(J)$, as well. This implies
$$J(w(t))-\frac{1}{2}\norm{w(t)}^2\leq J(w(0))-\frac{1}{2}\norm{w(0)}^2\leq C,\;0\leq t<T^*(f),$$
for a constant $C>0$ since $w(0)=u(0)/T^*(f)=f/T^*(f)$. Integrating \eqref{ineq:en_diss} yields
\begin{align*}
\int_s^t\frac{\norm{p(\tau)-w(\tau)}^2}{T^*(f)-\tau}\d\tau&=J(w(s))-\frac{1}{2}\norm{w(s)}^2-J(w(t))+\frac{1}{2}\norm{w(t)}^2\\
&\leq C+\frac{1}{2}\norm{w(t)}^2
\leq C',\quad 0\leq s<t<T^*(f),
\end{align*}
for a constant $C'>0$. Letting $t$ tend to $T^*(f)$, this implies that $t\mapsto\frac{\norm{p(t)-w(t)}^2}{T^*(f)-t}\in L^1(0,T^*(f))$. Hence, necessarily $\lim_{t\nearrow T^*(f)}\norm{p(t)-w(t)}=0$ has to hold.
\end{proof}

The weak limit in \eqref{eq:ext_conv} always exists, as the following proposition states.

\begin{prop}\label{prop:exist_weak_limit}
There exists a  increasing sequence $t_n\to T^*(f)$ and $p^*\in\H$ such that 
$$w(t_n)\rightharpoonup p^*,\;n\to\infty,$$
weakly in $\H$.
\end{prop}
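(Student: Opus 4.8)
The plan is to reduce the statement to a uniform bound: I will show that the family $\{w(t)\}_{0<t<T^*(f)}$ stays bounded in $\H$ as $t\nearrow T^*(f)$, after which the existence of a weakly convergent sequence is immediate from the weak sequential compactness of bounded sets in a Hilbert space. First I would record the integral representation
$$u(t)=\int_t^{T^*(f)} p(s)\,\d s,$$
which holds because $u(T^*(f))=\bar f=0$ and $\partial_t u=-p$, exactly as already used in the proof of Theorem~\ref{thm:ext_profile}. Taking norms and using the triangle inequality for the (Bochner) integral gives $\norm{u(t)}\leq\int_t^{T^*(f)}\norm{p(s)}\,\d s$. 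The key structural input is item 4 of Theorem~\ref{thm:brezis2}, namely that $s\mapsto\norm{p(s)}=\norm{A^0u(s)}$ is non-increasing. Hence $\norm{p(s)}\leq\norm{p(t)}$ for $s\geq t$, so that $\norm{u(t)}\leq(T^*(f)-t)\norm{p(t)}$ and therefore
$$\norm{w(t)}=\frac{\norm{u(t)}}{T^*(f)-t}\leq\norm{p(t)}.$$

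Next I would exploit the monotonicity once more: fixing any $t_0\in(0,T^*(f))$, the non-increasing map $s\mapsto\norm{p(s)}$ is bounded above on $[t_0,T^*(f))$ by the finite value $\norm{p(t_0)}$ (finite since $u(t_0)\in\dom(\partial J)$). Combined with the previous estimate this yields $\sup_{t_0\leq t<T^*(f)}\norm{w(t)}\leq\norm{p(t_0)}<\infty$, i.e.\ the family $\{w(t)\}$ is bounded in $\H$ near the extinction time. I would then pick an arbitrary increasing sequence $t_n\nearrow T^*(f)$ with $t_n\geq t_0$; the sequence $\{w(t_n)\}_n$ is bounded in the Hilbert space $\H$, and bounded sequences in a Hilbert space admit weakly convergent subsequences (reflexivity together with the Eberlein--Šmulian theorem). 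Passing to such a subsequence and relabelling produces an increasing sequence $t_n\to T^*(f)$ and a limit $p^*\in\H$ with $w(t_n)\rightharpoonup p^*$, which is precisely the claim.

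I do not expect any genuine obstacle here: the entire content sits in the inequality $\norm{w(t)}\leq\norm{p(t)}$, which in turn rests solely on the monotonicity of $\norm{p(\cdot)}$ from Brezis' theorem, while the passage to a weak limit is a routine compactness argument. It is worth emphasizing that, in contrast to Theorem~\ref{thm:ext_profile}, no Poincar\'e-type inequality is needed for mere weak convergence; the Poincar\'e inequality only enters later to guarantee that the limit $p^*$ is nonzero.
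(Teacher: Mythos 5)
Your proof is correct and follows essentially the same route as the paper: the integral representation $u(t)=\int_t^{T^*(f)}p(s)\,\d s$, the triangle inequality, the monotonicity of $s\mapsto\norm{p(s)}$ to get $\norm{w(t)}\leq\norm{p(t)}$, and weak compactness of bounded sets in $\H$. The only (inessential) difference is how the uniform bound on $\norm{p(t)}$ is closed: you use monotonicity from a fixed $t_0>0$ together with $u(t_0)\in\dom(\partial J)$, whereas the paper invokes the Lipschitz estimate of Theorem~\ref{thm:brezis2} to get the explicit bound $\norm{p(t)}\leq\frac{2}{T^*(f)}\norm{f}$ for $t>T^*(f)/2$.
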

\begin{proof}
From $u(t)=\int_{t}^{T^*(f)}p(s)\d s$ it follows using 2. in Theorem~\ref{thm:brezis2} and that $s\mapsto\norm{p(s)}$ is non-increasing 
\begin{align*}
\norm{w(t)}=\norm{\frac{u(t)}{T^*(f)-t}}\leq\frac{1}{T^*(f)-t}\int_{t}^{T^*(f)}\norm{p(s)}\d s\leq\norm{p(t)}\leq \frac{2}{T^*(f)}\norm{f},
\end{align*}
which holds for all $T^*(f)/2<t<T^*(f)$. Hence, by the weak compactness of the closed unit ball in Hilbert spaces, there is a increasing sequence $(t_n)$ converging to $T^*(f)$ and some $p^*\in\H$ such that $w(t_n)\rightharpoonup p^*$ weakly in $\H$. 
\end{proof}

To ensure the existence of the strong limit one needs additional regularity, as the following corollary states.

\begin{cor}\label{cor:strong_limit}
Assume that there is some compactly embedded space $\X\Subset\H$ and $c>0$ such that $\sup_{n\in\N}\norm{w(t_n)}_\X\leq c$. Then the convergence of $w(t_n)$ to $p^*$ is strong in~$\H$ and Theorem~\ref{thm:ext_profile} is applicable.
\end{cor}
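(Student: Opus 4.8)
The plan is to combine the weak convergence already secured in Proposition~\ref{prop:exist_weak_limit} with the compactness supplied by the hypothesis $\X\Subset\H$, and then simply invoke Theorem~\ref{thm:ext_profile}, whose sole hypothesis is exactly the strong convergence $w(t_n)\to p^*$ that we are asked to produce. First I would record that by assumption the sequence $(w(t_n))$ is bounded in $\X$, so by the very definition of a compact embedding it is precompact in $\H$: every subsequence of $(w(t_n))$ admits a further subsequence converging strongly in $\H$.

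Next I would identify the strong limit. Given any subsequence of $(w(t_n))$, extract by compactness a further subsequence $(w(t_{n_k}))$ with $w(t_{n_k})\to q$ strongly in $\H$ for some $q\in\H$. Since strong convergence implies weak convergence we have $w(t_{n_k})\rightharpoonup q$; but being a subsequence of the weakly convergent sequence from Proposition~\ref{prop:exist_weak_limit} it also satisfies $w(t_{n_k})\rightharpoonup p^*$. Uniqueness of weak limits in a Hilbert space then forces $q=p^*$. Thus every subsequence of $(w(t_n))$ has a further subsequence converging strongly to the \emph{same} limit $p^*$, and by the standard subsequence principle the whole sequence converges strongly, i.e.\ $w(t_n)\to p^*$ in $\H$.

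Finally, strong convergence $w(t_n)\to p^*$ is precisely the standing assumption of Theorem~\ref{thm:ext_profile}, so that theorem applies verbatim and yields $p^*\in\partial J(p^*)$, together with $p^*\neq0$ whenever the Poincar\'e inequality \eqref{ineq:poincare} holds. I do not expect a genuine obstacle here: the argument is essentially immediate, and the only point requiring a little care is the upgrade from subsequential to full-sequence convergence, which is routine once uniqueness of the weak limit is in hand.
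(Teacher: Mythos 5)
Your proof is correct and supplies exactly the routine argument the paper leaves implicit: the corollary is stated without proof, as an immediate consequence of the weak convergence in Proposition~\ref{prop:exist_weak_limit} combined with the compact embedding. Your subsequence-extraction plus uniqueness-of-weak-limits argument is the standard way to make this precise, and invoking Theorem~\ref{thm:ext_profile} afterwards is exactly what the paper intends.
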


Using our abstract framework, we can obtain the results in \cite{andreu2002some} for more-dimensional total variation flow very straightforwardly.
\begin{example}[Total variation flow in arbitrary dimension]
We consider the $\tv$-flow in dimension $n\geq 3$ with bounded domain $\Omega\subset\R^n$. In this case, there is no general Poincar\'{e} inequality available and, hence, no finite extinction time can be expected for arbitrary initial datum $f\in L^2(\Omega)$. However, for $f\in L^\infty(\Omega)$ one can show a finite extinction time. It is well-known that in this case the essential supremum of the solution $u(t)$ of the $\tv$-flow remains uniformly bounded in $t$. Hence, we show that the Poincar\'{e} inequality 
\begin{align*}
\norm{u-\bar{u}}_{L^2(\Omega)}\leq C\tv(u)
\end{align*}
holds for functions $u\in\{u\in \bv(\Omega)\cap L^\infty(\Omega)\st\norm{u}_{L^\infty(\Omega)}\leq c\}$. Assume there is a sequence of functions $u_k\in\bv(\Omega)\cap L^\infty(\Omega)$ with zero mean such that
\begin{align*}
\norm{u_k}_{L^2(\Omega)}>k\tv(u_k),\quad\norm{u_k}_{L^\infty(\Omega)}\leq c,\quad\forall k\in\N.
\end{align*}
Since $\norm{u_k}_{L^2(\Omega)}\leq\sqrt{|\Omega|}\norm{u_k}_{L^\infty(\Omega)}\leq\sqrt{|\Omega|}c<\infty$, we can set $\norm{u_k}_{L^2(\Omega)}=1$. Passing to a subsequence, we can furthermore assume that $u_k$ converges strongly to some $u$ in $L^1(\Omega)$. Since $\tv(u)\leq\liminf_{k\to\infty}\tv(u_k)=0$, by the lower semi-continuity of the total variation, we infer that $u$ is in $\bv(\Omega)$ and is constant almost everywhere in $\Omega$. Having zero mean, $u\equiv 0$ has to hold. Furthermore, from $0\leq\norm{u_k}_{L^2(\Omega)}\leq\sqrt{c\norm{u_k}_{L^1(\Omega)}}\to 0$ as $k\to\infty$ we infer that $u_k\to 0$ in $L^2(\Omega)$ which is a contradiction to $\norm{u_k}_{L^2(\Omega)}=1$ for all $k\in\N$. Now Theorem~\ref{thm:decrease} shows the existence of a finite extinction time.

To show that the strong limit $\lim_{n\to\infty}w(t_n)$ exists on some increasing sequence $t_n\to T^*(f)$ and is a non-trivial eigenvector of $\tv$, it suffices to observe that the space $\X:=L^\infty(\Omega)\cap\bv(\Omega)$ is compactly embedded in $\H:=L^2(\Omega)$ and use \cite[Eq. (5.4)]{andreu2002some} in order to apply Corollary~\ref{cor:strong_limit} and Theorem~\ref{thm:ext_profile}.
\end{example}
\change{
\begin{rem}[Sharp convergence rate]
We would like to remark that the results of this section imply that the solution of the gradient flow $u(t)$ converges with rate $T^*(f)-t$ and if the Poincar\'{e} inequality \eqref{ineq:poincare} holds this rate is sharp. The upper rate is established through the boundedness of $\norm{w(t)}$ as shown in the proof of Proposition~\ref{prop:exist_weak_limit}. The sharpness follows from the lower bound on $\norm{w(t)}$ as established in the end of the proof of Theorem~\ref{thm:ext_profile}.
\end{rem}
}

\subsection{Extinction profiles: the spectral decomposition case}

Now we again specialize on the case that the gradient flow generates a sequence of eigenvectors and assume that it becomes extinct at time $T^*(f)<\infty$. In this case, we prove that extinction profiles are always eigenvectors without any compactness assumptions. In addition, since the gradient flow is equivalent to the variational problem, it holds $T^*(f)=\norm{f}_*$ according to \cite{bungert2018solution}. However, we can give another formula in terms of the extinction profile which allows us to classify all possible extinction profiles as maximizers of a certain functional.

\begin{thm}\label{thm:formula_ext_time}
Let the gradient flow \eqref{eq:gradient_flow} generate a sequence of eigenvectors $p(t)$ for $t>0$ and have the minimal extinction time $0<T^*(f)<\infty$. Furthermore, let $p^*\in\H$ be such that $w(t_n)\rightharpoonup p^*$, where $(t_n)$ is an increasing sequence converging to $T^*(f)$ as $n\to\infty$ (cf.~Proposition~\ref{prop:exist_weak_limit}). Then the following statements are true:
\begin{enumerate}
\item If \eqref{ineq:poincare} holds, then $p^*\neq0$,
\item $p(t)\in\partial J(p^*)$ for all $0<t<T^*(f)$,
\item $p^*\in\partial J(p^*)$,
\item $T^*(f)=\frac{\langle f,p^*\rangle}{J(p^*)}=\sup_{p\in\calN(J)^\perp}\frac{\langle f,p\rangle}{J(p)}$.
\end{enumerate}
\end{thm}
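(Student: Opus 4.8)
The plan is to reduce everything to one explicit pairing computation that is made possible by the eigenvector structure. Assume without loss of generality $\bar f=0$ (Lemma~\ref{lem:equivalence_init_cond}) and abbreviate $T:=T^*(f)$, so that finite extinction ($u(T)=\bar f=0$) gives $u(t)=\int_t^T p(\sigma)\d\sigma$ and $w(t)=\frac{1}{T-t}\int_t^T p(\sigma)\d\sigma$. The quantity $h(\sigma):=\norm{p(\sigma)}^2$ is non-increasing by item~4 of Theorem~\ref{thm:brezis2}, hence admits a left limit $g:=\lim_{\sigma\nearrow T}h(\sigma)\ge 0$. The crucial observation is that, since the flow generates eigenvectors, Theorem~\ref{thm:orthogonality} yields $\langle p(\sigma),p(t)\rangle=\norm{p(\sigma)}^2$ whenever $0<t\le\sigma$. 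Consequently, for every $t\le s<T$ one has $\langle p(t),w(s)\rangle=\frac{1}{T-s}\int_s^T h(\sigma)\d\sigma=:A(s)$, which is \emph{independent of} $t$; monotonicity of $h$ then gives $A(s)\to g$ as $s\nearrow T$. Passing to the limit along $t_n$ and using $w(t_n)\rightharpoonup p^*$, I obtain $\langle p(t),p^*\rangle=g$ for every $t\in(0,T)$.

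From here the remaining claims follow by soft arguments. First, $p^*\in K$ because each $w(t_n)$ is a convex average of the elements $p(\sigma)\in K$ and $K$ is weakly closed, exactly as in Theorem~\ref{thm:ext_profile}. The dual representation \eqref{def:J_and_K} with the test element $p(t)\in K$ gives $J(p^*)\ge\langle p(t),p^*\rangle=g$, while weak lower semicontinuity of $J$ together with $J(w(t_n))=\langle p(t_n),w(t_n)\rangle=A(t_n)\to g$ gives $J(p^*)\le g$; hence $J(p^*)=g$. Cauchy--Schwarz applied to $g=\langle p(t),p^*\rangle\le\norm{p(t)}\norm{p^*}$ with $\norm{p(t)}\to\sqrt g$ forces $\norm{p^*}^2\ge g$, whereas $p^*\in K$ forces $\norm{p^*}^2=\langle p^*,p^*\rangle\le J(p^*)=g$; thus $\norm{p^*}^2=J(p^*)=g$. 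Since $\langle p(t),p^*\rangle=g=J(p^*)$ with $p(t),p^*\in K$, this is precisely the statement that $p(t),p^*\in\partial J(p^*)$, proving items~2 and~3. For item~1, the Poincar\'e inequality \eqref{ineq:poincare} gives $\norm{u(t)}\le CJ(u(t))=C\langle p(t),u(t)\rangle\le C\norm{p(t)}\norm{u(t)}$, hence $\norm{p(t)}\ge 1/C$ and $g\ge 1/C^2>0$, so $p^*\ne0$.

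For item~4 I would compute $\langle f,p^*\rangle=\int_0^T\langle p(\sigma),p^*\rangle\d\sigma=Tg$, so that $\langle f,p^*\rangle/J(p^*)=Tg/g=T^*(f)$. Since $p^*\in K\subset\calN(J)^\perp$ it is admissible in the supremum defining the dual norm, giving $\sup_{p\in\calN(J)^\perp}\langle f,p\rangle/J(p)\ge T^*(f)$, while the lower bound \eqref{ineq:lower_bound_ext} supplies the reverse inequality $\norm{f}_*\le T^*(f)$; the chain of inequalities then collapses to equalities. The main obstacle is the first paragraph: the explicit evaluation of $\langle p(t),w(s)\rangle$ and its independence of $t$ rely entirely on the orthogonality relation of Theorem~\ref{thm:orthogonality}, which encodes that the flow produces eigenvectors. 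This is exactly what lets me pass to the weak limit while testing only against the fixed vectors $p(t)$, thereby circumventing the lack of strong convergence of $w(t_n)$ that was needed in the general case of Theorem~\ref{thm:ext_profile}.
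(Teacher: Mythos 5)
Your proposal is correct, and while it proves the same chain of facts it is organized around different key lemmas than the paper's proof. The paper derives item 2 from the gradient-flow/variational-problem equivalence (Theorem~\ref{thm:equivalence_gf_vp}, which gives $p(t)\in\partial J(u(s))$ for $t\le s$, hence $J(u(t_n))=\langle p(t),u(t_n)\rangle$) combined with weak lower semicontinuity; it proves item 3 by averaging the resulting identity $J(p^*)=\langle p(t),p^*\rangle$ over $t\in[s,T]$ and testing the weak limit against $p^*$ itself; and it proves item 1 by a separate, self-contained computation, pairing $w(t)$ with $f=\int_0^{T}p(r)\d r$ and using Theorem~\ref{thm:orthogonality} to split a double integral and conclude $\langle w(t),f\rangle\ge t/C^2$, which precludes $p^*=0$. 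You instead funnel everything through the single scalar $g=\lim_{\sigma\nearrow T}\norm{p(\sigma)}^2$ and the identity $\langle p(t),p^*\rangle=g$, obtained from the same orthogonality relation (equivalently Corollary~\ref{cor:hierarchy}): items 2 and 3 then follow from the sandwich $J(p^*)=g=\norm{p^*}^2$ (dual representation from below, weak lower semicontinuity from above, Cauchy--Schwarz together with $p^*\in K$ for the norm identity), and item 1 comes essentially for free, since \eqref{ineq:poincare} forces $\norm{p(t)}\ge 1/C$ pointwise and hence $g\ge 1/C^2$ --- considerably shorter than the paper's double-integral argument, though contingent on items 2--3 being established first, whereas the paper's item 1 stands alone. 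For the second equality in item 4 the paper invokes the equivalence with \eqref{eq:var_prob} and the external result that the variational problem has extinction time $\norm{f}_*$; you close the loop internally by the squeeze $T^*(f)=\langle f,p^*\rangle/J(p^*)\le\norm{f}_*\le T^*(f)$, using admissibility of $p^*\in K\subset\calN(J)^\perp$ in the supremum and the paper's own lower bound \eqref{ineq:lower_bound_ext}; this is more self-contained and also yields the extra quantitative fact that the extinction profile satisfies $\norm{p^*}^2=\lim_{t\nearrow T}\norm{p(t)}^2$. One caveat you share with the paper: the division by $J(p^*)$ in item 4 implicitly presupposes $p^*\neq 0$, which is guaranteed under \eqref{ineq:poincare} by item 1 since $J(p^*)=\norm{p^*}^2$.
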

\begin{proof}
For a concise notation we abbreviate $T:=T^*(f)$.
Ad 1.:
We have already seen in the proof of Theorem~\ref{thm:ext_profile} that the Poincar\'{e} inequality \eqref{ineq:poincare} implies that $\norm{w(t)}\geq \frac{1}{C}>0$ for all $0\leq t<T^*(f)$. Since, however, $w(t)$ converges only weakly to $p^*$ this is not yet sufficient to prove $p^*\neq0$. Instead we consider the scalar product $\langle w(t),f\rangle$ and use the identities
\begin{align*}
w(t)&=\frac{1}{T-t}\int_t^{T}p(s)\d s,\\
f&=\int_0^{T}p(r)\d r
\end{align*}
to infer
\begin{align}
\langle w(t),f\rangle&=\frac{1}{T-t}\int_0^T\int_t^T\langle p(s),p(r)\rangle\d s\d r\notag\\
&=\frac{1}{T-t}\left[\int_0^t\int_t^T\langle p(s),p(r)\rangle\d s\d r+\int_t^T\int_t^T\langle p(s),p(r)\rangle\d s\d r\right]\notag\\\label{ineq:1}
&=\frac{1}{T-t}\left[\int_0^t\int_t^T\norm{p(s)}^2\d s\d r+\int_t^T\int_t^T\langle p(s),p(r)\rangle\d s\d r\right],
\end{align}
where we used the orthogonality of increments (cf.~Theorem~\ref{thm:orthogonality}). Now we observe that by the Cauchy-Schwarz inequality
$$\norm{w(t)}\leq\frac{1}{T-t}\int_t^T\norm{p(s)}\d s\leq\frac{1}{\sqrt{T-t}}\left(\int_t^T\norm{p(s)}^2\d s\right)^\frac{1}{2},$$
and, hence, 
\begin{align}\label{ineq:2}
\frac{1}{T-t}\int_t^T\norm{p(s)}^2\d s\geq\norm{w(t)}^2\geq \frac{1}{C^2}.
\end{align}
Furthermore, we calculate the following integral by splitting the square integration domain $[t,T]^2$ into integration over two triangles
\begin{align}
\int_t^T\int_t^T\langle p(s),p(r)\rangle\d s\d r&=\int_t^T\int_t^s\langle p(s),p(r)\rangle\d r\d s+\int_t^T\int_t^r\langle p(s),p(r)\rangle\d s\d r\notag\\
&=\int_t^T(s-t)\norm{p(s)}^2\d s+\int_t^T(r-t)\norm{p(r)}^2\d r\notag\\\label{ineq:3}
&\geq 0,
\end{align}
where we used Theorem~\ref{thm:orthogonality} again. Combining \eqref{ineq:1}, \eqref{ineq:2}, and \eqref{ineq:3}, we infer
$$\langle w(t),f\rangle\geq \frac{t}{C^2},\quad0\leq t<T,$$
and, thus, $w(t_n)$ cannot converge weakly to zero for $n\to\infty$.
\\
Ad 2.: According to Theorem~\ref{thm:equivalence_gf_vp} it holds $p(t)\in\partial J(u(s))$ for $t\leq s$. This implies
\begin{align*}
J(p^*)\leq&\liminf_{n\to\infty}J(w(t_n))=\liminf_{n\to\infty}\frac{1}{T^*(f)-t_n}J(u(t_n))=\liminf_{n\to\infty}\frac{1}{T^*(f)-t_n}\langle p(t),u(t_n)\rangle\\
=&\liminf_{n\to\infty}\langle p(t),w(t_n)\rangle=\langle p(t),p^*\rangle,\quad\forall 0\leq t<T^*(f),
\end{align*}
which shows $p(t)\in\partial J(p^*)$.
\\
Ad 3.: The equality $J(p^*)=\langle p(t),p^*\rangle$ implies
$$J(p^*)=\frac{1}{T^*(f)-s}\int_s^{T^*(f)}\langle p(t),p^*\rangle\d t=\langle w(s),p^*\rangle,\quad\forall 0\leq s<T.$$
Using this with $s=t_n$ and taking the limit $n\to\infty$ yields $J(p^*)=\norm{p^*}^2$ due to the weak convergence of $w(t_n)$ to $p^*$. 
\\
Ad 4.: We use 2. together with $f=\int_0^{T^*(f)}p(t)\d t$ to compute
$$\langle f,p^*\rangle=\int_0^{T^*(f)}\langle p(t),p^*\rangle\d t=\int_0^{T^*(f)} J(p^*)\d t=T^*(f)J(p^*),$$
which is implies $T^*(f)={\langle f,p^*\rangle}/{J(p^*)}.$ On the other hand, by Theorem~\ref{thm:equivalence_gf_vp} the solution $u(t)$ of the gradient flow coincides with the solution of the variational problem \eqref{eq:var_prob} which has the extinction time $T^*(f)=\norm{f}_*=\sup_{p\in\calN(J)^\perp}{\langle f,p\rangle}/{J(p)}$ according to \cite{bungert2018solution}. 
\end{proof}
\begin{cor}\label{cor:char_ext_profile}
Under the conditions of Theorem~\ref{thm:formula_ext_time} any extinction profile $p^*$ is a maximizer of the functional 
$$\calN(J)^\perp\ni p\mapsto\frac{\langle f,p\rangle}{J(p)}$$
or, equivalently,
$$\frac{f}{\norm{f}_*}\in\partial J(p^*).$$
In particular, the extinction profile is uniquely determined if $f/\norm{f}_*$ lies in the intersection of $\partial J(0)$ and exactly one other subdifferential.
\end{cor}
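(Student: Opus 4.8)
My plan is to read parts (a) and (b) almost verbatim off Theorem~\ref{thm:formula_ext_time} and to concentrate the genuine work on the uniqueness claim. As in the rest of the section I take $\bar f=0$, so that $f\in\calN(J)^\perp$; assuming the Poincar\'e inequality~\eqref{ineq:poincare} (which is what makes the quotient meaningful) item~1 of Theorem~\ref{thm:formula_ext_time} gives $p^*\neq0$, and then item~3 gives $J(p^*)=\norm{p^*}^2>0$. The only ingredient not already contained in that theorem is the elementary fact that $f/\norm{f}_*\in\partial J(0)=K$. To see this I would take any $v\in\H$, split $v=v_\perp+v_0$ with $v_0\in\calN(J)$, and use property~3 of Proposition~\ref{prop:properties_one-hom} together with the definition~\eqref{eq:dual_norm} of the dual norm to obtain $\langle f,v\rangle=\langle f,v_\perp\rangle\leq\norm{f}_*\,J(v_\perp)=\norm{f}_*\,J(v)$, i.e.\ $\langle f/\norm{f}_*,v\rangle\leq J(v)$ for all $v$.

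With this in hand the first two assertions are immediate. Item~4 of Theorem~\ref{thm:formula_ext_time} reads $\langle f,p^*\rangle/J(p^*)=\norm{f}_*=\sup_{p\in\calN(J)^\perp}\langle f,p\rangle/J(p)$, which says precisely that $p^*$ maximizes the quotient. Rewriting this equality as $\langle f/\norm{f}_*,p^*\rangle=J(p^*)$ and combining it with $f/\norm{f}_*\in K$ yields $f/\norm{f}_*\in\partial J(p^*)$ straight from the definition of the subdifferential. The asserted equivalence is then just the observation that, since $f/\norm{f}_*\in K$ holds unconditionally, the membership $f/\norm{f}_*\in\partial J(p^*)$ is equivalent to the single scalar identity $\langle f/\norm{f}_*,p^*\rangle=J(p^*)$, i.e.\ to $p^*$ attaining the supremum $\norm{f}_*$.

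The substantive point is the uniqueness statement, which I would handle as follows. For every extinction profile $p^*$ we now know $f/\norm{f}_*\in\partial J(0)\cap\partial J(p^*)$, and $\partial J(p^*)$ is genuinely different from $\partial J(0)=K$: if we had $\partial J(p^*)=K$, then from $-p^*\in K$ we would get $\langle -p^*,p^*\rangle=J(p^*)$, i.e.\ $-\norm{p^*}^2=\norm{p^*}^2$, forcing $p^*=0$, a contradiction. Hence $f/\norm{f}_*$ lies in the ``other'' subdifferential $\partial J(p^*)$; by hypothesis there is exactly one such, say $S_0$, so every extinction profile has the same subdifferential $\partial J(p^*)=S_0$. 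Taking two profiles $p_1^*,p_2^*$, each is an eigenvector with eigenvalue $1$ (item~3), whence $J(p_i^*)=\norm{p_i^*}^2$ and $p_1^*,p_2^*\in S_0=\partial J(p_1^*)=\partial J(p_2^*)$. The membership $p_2^*\in\partial J(p_1^*)$ gives $\langle p_1^*,p_2^*\rangle=J(p_1^*)=\norm{p_1^*}^2$, and by symmetry $\langle p_1^*,p_2^*\rangle=\norm{p_2^*}^2$; therefore $\norm{p_1^*-p_2^*}^2=\norm{p_1^*}^2-2\langle p_1^*,p_2^*\rangle+\norm{p_2^*}^2=0$, so $p_1^*=p_2^*$.

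The step I expect to require the most care is the middle one: pinning down the phrase ``$f/\norm{f}_*$ lies in exactly one other subdifferential'' so that it legitimately forces $\partial J(p_1^*)=\partial J(p_2^*)$ for any pair of extinction profiles. Once that identification is secured, the Hilbert-space cancellation above is automatic and the remaining assertions are bookkeeping on top of Theorem~\ref{thm:formula_ext_time}.
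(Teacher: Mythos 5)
Your proposal is correct and follows the same route the paper intends: the paper states this corollary without proof as an immediate consequence of Theorem~\ref{thm:formula_ext_time}, and your argument is exactly that derivation with the details filled in (item~4 gives the maximizer property, $f/\norm{f}_*\in\partial J(0)$ follows from the dual-norm definition, and the equivalence is the definition of the subdifferential). Your uniqueness argument --- showing $\partial J(p^*)\neq\partial J(0)$ for any nonzero profile, so all profiles share the single ``other'' subdifferential $S_0$, and then using $\langle p_1^*,p_2^*\rangle=\norm{p_1^*}^2=\norm{p_2^*}^2$ to force $p_1^*=p_2^*$ --- correctly formalizes what the paper leaves implicit and matches how the paper applies the corollary in its 1D total variation example.
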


\begin{cor}
Under the conditions of Theorem~\ref{thm:formula_ext_time} any extinction profile $p^*$ meets
$$J\left(\frac{p^*}{\norm{p^*}}\right)\leq\frac{\norm{f}}{\norm{f}_*}.$$
This estimate is sharp which can be seen by choosing $f$ to be an eigenvector.
\end{cor}

\begin{example}[One-dimensional total variation flow]
Since according to Section~\ref{sec:1D-TV-flow}, the one-dimensional total variation flow yields a spectral decomposition into eigenvectors, Theorem~\ref{thm:formula_ext_time} and Corollary~\ref{cor:char_ext_profile} are applicable. Bonforte and Figalli in \cite{bonforte2012total} studied the case of a non-negative initial datum $f\in\bv(\R)$ which lives on the (minimally chosen) interval $[a,b]$. They showed that in this case, the extinction time is given by $T=\frac{1}{2}\int_a^b f\d x$ and that $w(t)=u(t)/(T-t)$ converges to $p^*:=\frac{2}{b-a}\chi_{[a,b]}$ for $t\to T$. This follows directly from our results after observing that $f/\norm{f}_*$ can only lie in the subdifferential of a constant function on $[a,b]$ which therefore coincides with $p^*$. The extinction time is consequently given by 
$$T=\frac{\langle f,p^*\rangle}{\tv(p^*)}=\frac{1}{2}\int_a^bf\d x.$$
To see that $g:=f/\norm{f}_*$ cannot lie in the subdifferential of a non-constant function, let us assume that there is $p$ such that $g\in\partial\tv(p)$ and $p'(x_0)\neq 0$ (in the sense of measures) for some $x_0\in(a,b)$. Writing $g=-v'$ with $|v|\leq 1$ and $v=\pm 1$ where $p'$ is positive / negative, we find that $v(x_0)=\pm 1$ and $v$ is non-increasing since $g=-v'$ is non-negative on $\R$. Hence, it holds $v=1$ on $(-\infty,x_0)$ or $v=-1$ on $(x_0,\infty)$ which is a contradiction to the fact that $g=-v'$ takes on strictly positive values on a subset in $[a,x_0]$ and $[x_0,b]$ that has positive Lebesgue measure. Otherwise, the interval $[a,b]$ would not be minimally chosen.
\end{example}
%

\section*{Acknowledgements}

The work by LB and MB has been supported by ERC via Grant EU FP7 – ERC Consolidator Grant 615216 LifeInverse. MB and AC acknowledge further support by the EU under grant 2020 NoMADS - DLV-777826. MN was partially 
supported by the INDAM/GNAMPA and by the University of Pisa Project PRA 
2017-18.

\bibliographystyle{abbrv}
\bibliography{bibliography}

\end{document}